\documentclass[11pt]{amsart}

\usepackage[T1]{fontenc}
\usepackage{lmodern}
\usepackage{microtype}
\usepackage{amsmath,amssymb,amsthm,mathtools}
\usepackage[margin=1in]{geometry}
\usepackage{enumitem}
\usepackage{booktabs}
\usepackage{array}
\usepackage{mathrsfs}
\usepackage[colorlinks=true,linkcolor=blue,citecolor=blue,urlcolor=blue]{hyperref}
\usepackage{aliascnt}

\newtheorem{theorem}{Theorem}[section]

\newaliascnt{proposition}{theorem}
\newtheorem{proposition}[proposition]{Proposition}
\aliascntresetthe{proposition}

\newaliascnt{lemma}{theorem}
\newtheorem{lemma}[lemma]{Lemma}
\aliascntresetthe{lemma}

\newaliascnt{corollary}{theorem}
\newtheorem{corollary}[corollary]{Corollary}
\aliascntresetthe{corollary}

\newaliascnt{question}{theorem}
\newtheorem{question}[question]{Question}
\aliascntresetthe{question}

\newaliascnt{externallemma}{theorem}
\newtheorem{externallemma}[externallemma]{External Lemma}
\aliascntresetthe{externallemma}

\theoremstyle{remark}
\newaliascnt{remark}{theorem}
\newtheorem{remark}[remark]{Remark}
\aliascntresetthe{remark}

\usepackage[nameinlink,capitalise]{cleveref}
\crefname{theorem}{Theorem}{Theorems}
\Crefname{theorem}{Theorem}{Theorems}
\crefname{proposition}{Proposition}{Propositions}
\Crefname{proposition}{Proposition}{Propositions}
\crefname{lemma}{Lemma}{Lemmas}
\Crefname{lemma}{Lemma}{Lemmas}
\crefname{corollary}{Corollary}{Corollaries}
\Crefname{corollary}{Corollary}{Corollaries}
\crefname{question}{Question}{Questions}
\Crefname{question}{Question}{Questions}
\crefname{externallemma}{External Lemma}{External Lemmas}
\Crefname{externallemma}{External Lemma}{External Lemmas}
\crefname{remark}{Remark}{Remarks}
\Crefname{remark}{Remark}{Remarks}

\numberwithin{equation}{section}
\allowdisplaybreaks
\setlist[enumerate]{label=(\roman*),leftmargin=2.2em}

\newcommand{\N}{\mathbb N}
\newcommand{\Z}{\mathbb Z}
\newcommand{\E}{\mathbb E}
\newcommand{\Var}{\operatorname{Var}}
\newcommand{\eps}{\varepsilon}
\newcommand{\floor}[1]{\left\lfloor #1\right\rfloor}
\newcommand{\ceil}[1]{\left\lceil #1\right\rceil}
\newcommand{\set}[1]{\left\{#1\right\}}

\title[Prime-power rarefaction and Erd\H{o}s Problem 400]
{Prime-Power Rarefaction and a Density-One Lower Bound for Erd\H{o}s Problem 400}

\author[E. Li]{Eric Li}
\dedicatory{\normalfont\normalsize Trinity College, University of Cambridge}
\date{June 22, 2026}
\thanks{Email addresses: \href{mailto:el593@cam.ac.uk}{el593@cam.ac.uk}, \href{mailto:contact@ericli.com}{contact@ericli.com}.}

\subjclass[2020]{Primary 11A63; Secondary 11B65, 11K16, 11N37}
\keywords{factorial divisibility, sum of digits, Kummer carries, S-units, p-adic Subspace Theorem, Erd\H{o}s problems}

\hypersetup{
  pdftitle={Prime-Power Rarefaction and a Density-One Lower Bound for Erdos Problem 400},
  pdfauthor={Eric Li},
  pdfsubject={Density-one bounds for factorial divisibility in Erdos Problem 400},
  pdfkeywords={factorial divisibility, sum of digits, Kummer carries, S-units, p-adic Subspace Theorem, Erdos problems}
}

\begin{document}
\raggedbottom

\begin{abstract}
For fixed $k\ge2$, let $g_k(n)$ be the greatest excess
$a_1+\cdots+a_k-n$ among positive integers $a_i$ satisfying
$a_1!\cdots a_k!\mid n!$.  We prove that, for every $\eps>0$, all but
$o(x)$ integers $n\le x$ satisfy
\[
 g_k(n)\ge\left(\frac{3(k-1)}{\log12}-\eps\right)\log n.
\]
We also prove, as $n\to\infty$, the pointwise upper bound
\[
 g_k(n)\le (k-1)\log_2n+\log_2\log n+O_k(1).
\]
The central analytic input is uniform phase separation for one or two
frequencies on fixed-prime $S$-unit progressions, deduced directly from
the finite exceptional-subspace alternative of Drmota and Spiegelhofer,
and the resulting uniform digit-sum normal-order theorem.  A mixed
$2$--$3$ representation, quantitative two-block estimates, and a
large-prime Kummer sieve produce the stated coefficient.
\end{abstract}

\maketitle
\enlargethispage{2pt}

\section{Introduction}

In their 1980 monograph, Erd\H{o}s and Graham asked for the typical and
average size of an excess arising in products of factorials
\cite[p.~77]{ErdosGraham1980}.  Write
\(\N=\{1,2,3,\ldots\}\).  For a fixed integer \(k\ge2\), define
\begin{equation}\label{eq:defgk}
 g_k(n)=\max\set{a_1+\cdots+a_k-n:
 a_1!\cdots a_k!\mid n!,\ a_i\in\N}.
\end{equation}
The maximum in \eqref{eq:defgk} exists: every admissible \(a_i\) is at
most \(n\), so there are only finitely many admissible tuples, while
\((1,\ldots,1)\) is always admissible.  Erd\H{o}s and Graham asked in
particular whether there is a constant \(c_k\) governing both the
almost-everywhere order of \(g_k(n)\) and the asymptotic behavior of
\(\sum_{n\le x}g_k(n)\).  This is recorded as Erd\H{o}s Problem~400 \cite{Bloom400}.

Here and throughout, a statement holds for \emph{almost all} positive
integers if its exceptional set has asymptotic density zero.  We
establish an explicit density-one lower coefficient, a complementary
pointwise upper bound, and the resulting bracket for the normalized
summatory function.

\subsection{Main results}

The principal result is the following exact density statement.

\begin{theorem}[Density-one lower bound]\label{thm:main}
Fix \(k\ge2\).  For every \(\eps>0\), as \(x\to\infty\),
\[
 \#\set{1\le n\le x:
 g_k(n)<\left(\frac{3(k-1)}{\log12}-\eps\right)\log n}=o(x).
\]
Equivalently, for every \(c<3(k-1)/\log12\), one has
\(g_k(n)\ge c\log n\) for almost all positive integers \(n\).
\end{theorem}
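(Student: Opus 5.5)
The plan is to reduce the factorial condition to a carry-counting problem via Kummer's theorem, with an explicit near-diagonal choice of the \(a_i\). Take \(a_1=n-m\) for a parameter \(m\) to be chosen, so that \(a_1!\,a_2!\cdots a_k!\mid n!\) becomes
\[
 a_2!\cdots a_k!\ \Big|\ n(n-1)\cdots(n-m+1)=m!\binom{n}{m}.
\]
The excess is then \(a_2+\cdots+a_k-m\). For \(k=2\) we aim at \(a_2=m+h\), and in general at \(a_2=m+h\) together with \(a_3,\dots,a_k\) each of size about \(h\). We need, for every prime \(p\le m+h\),
\[
 v_p\bigl((m+h)!\bigr)-v_p(m!)+\sum_{i\ge3}v_p(a_i!)\le v_p\binom nm=\kappa_p(m,n-m),
\]
where \(\kappa_p\) is the number of carries when adding \(m\) and \(n-m\) in base \(p\). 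The target is \(h\approx (3/\log 12-\eps)\log n\) per extra block, which yields the coefficient \(3(k-1)/\log12\).

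\textbf{Small primes \(2\) and \(3\).} These are the expensive primes. The left side has size about \(h\) for \(p=2\) and \(h/2\) for \(p=3\), per block. We therefore choose \(m\) simultaneously adapted to the binary and ternary expansions of \(n\): a mixed \(2\)--\(3\) representation in which \(m\) is forced into a progression modulo \(2^{\alpha}3^{\beta}\). The constraint is made so that adding \(m\) and \(n-m\) produces at least about \(\alpha\) binary carries and \(\beta\) ternary carries.

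The number of admissible \(m\) below \(n\) is roughly \(n/(2^\alpha3^\beta)\). Optimizing \(\alpha\approx 2\beta\) under the constraint that enough freedom remains is what gives \(2^{\alpha}3^{\beta}\approx 12^{\beta}\) and hence \(h\sim 3\log n/\log 12\). To show these carry counts are actually attained for almost all \(n\), we apply the uniform digit-sum normal-order theorem. It says that along fixed-prime \(S\)-unit progressions, \(s_2\) and \(s_3\) of the relevant shifted quantities follow their normal order, so \(\kappa_p=(s_p(m)+s_p(n-m)-s_p(n))/(p-1)\) is as large as needed. This is the step that requires the uniform phase separation for one or two frequencies, with the two frequencies coming from the simultaneous base-\(2\) and base-\(3\) conditions.

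\textbf{Primes \(5\le p\le h^{O(1)}\).} For these primes the demand \(v_p\bigl((m+h)!/m!\bigr)\) is \(O(h/p)\) plus a bounded number of higher-power contributions. A quantitative two-block estimate shows that, for all but a density-zero set of \(n\), some \(m\) in our family meets all these demands simultaneously. The method is to count the pairs \((n,m)\) failing for some such \(p\) and to bound that count by a convergent sum over \(p\).

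\textbf{Large primes \(p\) dividing \((m+1)\cdots(m+h)\).} Each such \(p\) divides the product at most once, so we need \(m\bmod p > n\bmod p\), i.e.\ at least one carry in the last base-\(p\) digit. We handle this with a large-prime Kummer sieve. For a random \(m\) in the \(2\)--\(3\) progression the failure probability for each such \(p\) is about \(1/2\), so we cannot simply hope that no bad prime occurs. Instead we exploit the remaining freedom in \(m\): the number of admissible \(m\) is a power of \(n\), while there are only \(O(h)\) large primes to satisfy. A second-moment argument over \(m\) should show that for almost all \(n\) some \(m\) satisfies every large-prime condition together with the small-prime ones.

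I expect the main obstacle to be the simultaneous control of the \(2\)- and \(3\)-adic carry counts uniformly over the progression of \(m\). This is precisely where the exceptional-subspace alternative of Drmota--Spiegelhofer enters. Some loss of \(\eps\) is taken in choosing \(\alpha,\beta\) to absorb the error terms.
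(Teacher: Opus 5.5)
There is a genuine gap, and it is structural: your construction cannot reach the stated coefficient for any $k\ge3$. With $a_1=n-m$ and $a_2=m+h$, all of the $2$-adic demand of $(m+1)\cdots(m+h)\,a_3!\cdots a_k!$ must be paid by the single binomial coefficient $\binom nm$. But $\nu_2\binom nm\le\floor{\log_2 n}$: there is at most one carry per column, which is \cref{lem:binarycapacity} with two parts. On the other side, $\nu_2\bigl((m+1)\cdots(m+h)\bigr)\ge\nu_2(h!)$ and $\nu_2(a_i!)=a_i-s_2(a_i)$, so the demand equals the excess $h+\sum_{i\ge3}a_i$ up to $O_k(\log\log n)$. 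Hence every tuple of your shape has excess at most $\log_2 n+O_k(\log\log n)$, whatever $m$ and the small $a_i$ are. Yet $3(k-1)/\log12\ge 6/\log12\approx2.41>1/\log2\approx1.44$ once $k\ge3$. Small extra factorials add demand but no carry capacity; only parts of linear size add capacity.

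The missing idea is the paper's construction. Make all $k$ parts of size $\asymp n$, with $a_i=M_it_i-1$, where $M_i$ is a power of $2$ or $3$ of size $X^{e_i}$ and $e_i\in(0,1)$. The binary and ternary exponents sum to $\lambda_2$ and $\lambda_3$, and one solves $\sum_i M_it_i=n+v+k$ exactly (\cref{lem:representations}). The consequences are:
\begin{enumerate}
\item $\binom{n+v}{a_1,\ldots,a_k}$ has binary capacity $(k-1)\log_2 n$;
\item forced suffixes plus normal order give budgets $\frac{h+\lambda_2}{2\log2}\log X$ and $\frac{h+\lambda_3}{\log3}\log X$;
\item there are $\asymp X^{k-1-\lambda_2-\lambda_3}$ representations, a positive power only if $\lambda_2+\lambda_3<k-1$.
\end{enumerate}
Eliminating $\lambda_2,\lambda_3$ from these three constraints is where $\log12=2\log2+\log3$ comes from. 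Your heuristic ($\alpha\approx2\beta$, $12^\beta\approx n$) does not locate the optimum even for $k=2$, where the optimal ratio $\alpha/\beta$ is about $3.3$. For $k=2$, your progression for $m$ modulo $2^\alpha3^\beta$ does match the paper ($n-m=2^Jt_1-1$, $m+h=3^Kt_2-1$).

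Even for $k=2$, three steps are misdiagnosed.

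\emph{Large primes.} If $p\mid m+j$ with $j\le h<p$, then $m\bmod p=p-j$ already sits at the top of the residue range. So the units-digit carry fails only when also $n\equiv -l\pmod p$ for some $1\le l\le j$, and the failure probability is $O(h/p)$, not $1/2$. Were it $1/2$, your plan would fail outright: each $m+j$ has its own prime factor above $Y$, so success would have chance at most about $2^{-h}=n^{-c}$, while near the endpoint only $n^{\Delta}$ choices of $m$, with $\Delta$ small, are available. A first-moment union bound suffices instead. Only $O(h\log n/\log Y)$ primes $p>Y$ divide $(n+1)\cdots(n+h)$, and each hits some $m+j$ for a proportion $O(h/Y)$ of $m$. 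When $p^e\mid m+j$, the digits $p-1$ of $m$ propagate the carry $e$ times, so higher powers, which you wrongly exclude, cost nothing. This is \cref{lem:largeprime}.

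\emph{Fixed primes $5\le p\le P$.} These cannot be handled by a two-block lower bound, which yields only $\min\{u_i,\tfrac12\}$ per part. At $p=5$ the margin is $2h/\log5\approx1.243h$ against $3h/\log12\approx1.207h$, so you need the full normal order of $s_5$ on every part and on $n$. The paper therefore uses \cref{thm:sunitnormal} with $S=\{2,3\}$ there, and reserves \cref{lem:twoblock} for $P<p\le(\log X)^B$.

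\emph{Phase separation.} The two frequencies in \cref{lem:phase} are two digit positions in a single base $p\notin S$, used for the variance bound. They are not the base-$2$ and base-$3$ constraints.
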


The elementary obstruction coming from binary carries gives the
following pointwise comparison.

\begin{theorem}[Pointwise upper bound]\label{thm:upper}
For every fixed \(k\ge2\), as \(n\to\infty\),
\[
 g_k(n)\le (k-1)\log_2n+\log_2\log n+O_k(1).
\]
Equivalently,
\[
 g_k(n)\le \frac{k-1}{\log2}\log n
 +\frac1{\log2}\log\log n+O_k(1).
\]
\end{theorem}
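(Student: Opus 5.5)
We prove \cref{thm:upper} by looking only at the prime \(2\) and using Legendre's formula in its binary form, \(v_2(m!)=m-s_2(m)\), where \(s_2\) is the binary digit sum.

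\emph{Step 1: the basic inequality.} Let \(a_1!\cdots a_k!\mid n!\) and put \(E=a_1+\cdots+a_k-n\). Comparing \(2\)-adic valuations gives
\[
 \sum_{i=1}^k\bigl(a_i-s_2(a_i)\bigr)\le n-s_2(n).
\]
Rearranging,
\[
 E\le \sum_{i=1}^k s_2(a_i)-s_2(n).
\]
We may discard all \(a_i=1\). Each such index contributes \(1\) to \(\sum a_i\) and nothing to the divisibility condition, so discarding it changes \(E\) by \(-1\). Hence it suffices to bound \(\sum s_2(a_i)\) over the \(a_i\ge2\), adding at most \(k\) at the end. Every \(a_i\) is at most \(n\), so \(s_2(a_i)\le \log_2 n+1\). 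This crude bound gives only \(E\le k\log_2 n+O(1)\), so the work is to save one full factor of \(\log_2 n\).

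\emph{Step 2: saving one \(\log_2 n\).} Write \(m=\max_i a_i\). Then \(m!\,\prod_{i\ne\max}a_i!\mid n!\), and since \(E\ge0\) is what we want to bound, we have \(\sum_{i\neq \max}a_i\ge n-m\).

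\begin{itemize}[label={}]
\item Case (a), \(m\le n-L\) with \(L=\log_2 n+\log_2\log n\). Then \(\sum_{i\ne\max}a_i\ge n-m\ge L\). The key improvement is to use \(s_2(a)\le\min(\log_2 a+1,\,a)\). For the largest element we use the sharper carry count instead: \(s_2(m)+s_2(n-m)-s_2(n)\) equals the number of carries when adding \(m\) and \(n-m\) in base \(2\).

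\item Case (b), \(m\) close to \(n\). Then the remaining \(a_i\) are small, and \(s_2(a_i)\le \log_2 a_i+1\le\log_2\log n+O(1)\) unless they are large.
\end{itemize}

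A cleaner route is to regroup the inequality as
\[
 E\le \bigl(s_2(a_1)-s_2(n)\bigr)+\sum_{i\ge2}s_2(a_i)\le \bigl(s_2(a_1)-s_2(n)\bigr)+(k-1)(\log_2n+1).
\]
The remaining task is then \(s_2(a_1)-s_2(n)\le \log_2\log n+O(1)\) for the largest \(a_1\). This can fail, for instance when \(n=2^j\) and \(a_1=n-1\). In that case, however, \(v_2\) forces the other \(a_i\) to be small. So the real step is a trade-off. Let \(t=n-a_1\). Then \(a_1!\,(\text{rest})\mid n!\) implies \(\prod_{i\ge2}a_i!\mid n!/a_1!\), and the valuation bound becomes
\[
 \sum_{i\ge2}\bigl(a_i-s_2(a_i)\bigr)\le v_2\!\left(\frac{n!}{a_1!}\right)= t+s_2(a_1)-s_2(n).
\]

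\emph{Step 3: the main obstacle.} The hard step is optimising this trade-off between \(t\) and the sizes of the \(a_i\). Define \(D=s_2(a_1)-s_2(n)\). The number of carries in \(a_1+t\) is at most \(\log_2 t+1\), so \(D\le s_2(t)+D\le \log_2 t+1\). The exact relation is \(D=\#\text{carries}-s_2(t)\).

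For the other terms, \(\sum_{i\ge2} a_i\le t+D+\sum_{i\ge2} s_2(a_i)\). Combined with \(E=a_1+\sum_{i\ge2}a_i-n=\sum_{i\ge2}a_i-t\), this gives
\[
 E\le D+\sum_{i\ge2}s_2(a_i)\le \log_2 t+1+(k-1)\bigl(\log_2\max_{i\ge2}a_i+1\bigr).
\]
We also have \(\max_{i\ge2}a_i\le a_1\le n\) and \(\sum_{i\ge2}a_i\le t+O(k\log n)\). Hence \(\log_2\max_{i\ge2}a_i\le \log_2(t+O(\log n))\). Therefore
\[
 E\le k\log_2 t+\dots\qquad\text{and}\qquad E\le (\log_2 t)+(k-1)\log_2 n+O_k(1),
\]
and the \(\log_2 t\) term is the problem. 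A large \(t\) allows a large \(D\) only through many carries, and then the bound \(\max a_i\le t\) sharpens the other terms.

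To finish, I would split into two cases.

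\begin{itemize}[label={}]
\item If \(t\le n/\log n\cdot C\), then the other \(a_i\) are at most about \(t\), so \(\sum_{i\ge2}s_2(a_i)\le(k-1)(\log_2 n-\log_2\log n+O(1))\). This absorbs \(\log_2 t\le\log_2 n\); it needs \(k\ge2\) and possibly a finer split to reach exactly \(\log_2\log n\).

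\item If \(t\) is large, the \(a_i\) are comparable to \(n\) and we iterate the argument on the second-largest element.
\end{itemize}

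Getting the constant exactly \(\log_2\log n\) is where I expect the difficulty. I would try an induction on \(k\): peel off the largest factorial, apply the two-term carry bound \(s_2(a)+s_2(b)-s_2(a+b)\le\log_2\min(a,b)+1\), and control the accumulated error of \(O(1)\) per step.
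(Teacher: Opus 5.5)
Your proposal does not reach a proof, and the one concrete estimate it relies on in Step~3 is false. You claim that the number of carries in $a_1+t$ is at most $\log_2 t+1$. The closing two-term bound $s_2(a)+s_2(b)-s_2(a+b)\le\log_2\min(a,b)+1$ makes the same claim. Both fail because carries propagate through the upper bits of the larger summand. Your own example $n=2^j$, $a_1=n-1$, $t=1$ has $j$ carries and $D=j-1$. The correct bound is $\lfloor\log_2 n\rfloor$. With it, your inequality only gives $E\le\log_2 n+(k-1)\log_2\bigl(t+O(\log n)\bigr)+O_k(1)$. This suffices only when $t$ is small, and it degenerates to about $k\log_2 n$ when $t\asymp n$. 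You leave that case as ``iterate on the second-largest element'', so neither the saving of one $\log_2 n$ nor the $\log_2\log n$ term is ever established.

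The missing idea is to compare with $N=a_1+\cdots+a_k=n+E$ rather than with $n$:
\[
 \sum_i s_2(a_i)-s_2(n)=\Bigl(\sum_i s_2(a_i)-s_2(N)\Bigr)+\bigl(s_2(N)-s_2(n)\bigr).
\]
The first bracket is the total binary carry count of the $k$-fold addition, by Kummer. The carry entering each column is at most $k-1$, and no carry enters above column $\lfloor\log_2 N\rfloor$. So this bracket is at most $(k-1)\lfloor\log_2 N\rfloor\le(k-1)\log_2(kn)$.

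Your peeling induction also works here, provided each step $\binom{a_1+\cdots+a_i}{a_i}$ is charged $\lfloor\log_2 N\rfloor$ carries, not $\log_2$ of the smaller summand.

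The second bracket is at most $s_2(E)$ by subadditivity of $s_2$. Hence $E\le(k-1)\log_2 n+s_2(E)+O_k(1)$. Since $s_2(E)\le 1+\log_2 E$, this first gives $E=O_k(\log n)$ and then $s_2(E)\le\log_2\log n+O(1)$. The $\log_2\log n$ term is thus the bit-length of the excess itself, not the outcome of a trade-off between $t$ and the sizes of the other $a_i$. This is exactly the paper's argument: binary carry capacity plus subadditivity and a bootstrap.
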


Summing the density-one lower estimate and the pointwise upper estimate
yields a nontrivial bracket for the normalized average.

\begin{corollary}[Summatory bracket]\label{cor:summatory}
For every fixed \(k\ge2\),
\[
 \frac{3(k-1)}{\log12}
 \le \liminf_{x\to\infty}\frac1{x\log x}\sum_{n\le x}g_k(n)
 \le \limsup_{x\to\infty}\frac1{x\log x}\sum_{n\le x}g_k(n)
 \le \frac{k-1}{\log2}.
\]
\end{corollary}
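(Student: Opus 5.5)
The corollary follows from \cref{thm:main} and \cref{thm:upper} by averaging; no new arithmetic input is needed. Throughout, $g_k(n)\ge0$, because the tuple $(n,1,\ldots,1)$ is admissible and has excess $k-1\ge0$.

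\emph{Upper bound.} By \cref{thm:upper} there is a constant $C_k$ such that $g_k(n)\le \frac{k-1}{\log2}\log n+\frac{1}{\log2}\log\log n+C_k$ for all $n\ge3$. The finitely many $n$ below $3$ contribute only $O(1)$. Bound $\log n\le\log x$ and $\log\log n\le\log\log x$ termwise. Then
\[
\sum_{n\le x}g_k(n)\le \frac{k-1}{\log2}\,x\log x+O(x\log\log x)+O(x).
\]
Dividing by $x\log x$ and letting $x\to\infty$ gives the stated $\limsup$ bound.

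\emph{Lower bound.} Fix $\eps>0$ and set $c=3(k-1)/\log12-\eps$. Let $E(x)$ be the exceptional set in \cref{thm:main}, so $\#E(x)=o(x)$. Since $g_k\ge0$, we may discard all $n\in E(x)$ and all $n\le x/\log x$, the latter contributing at most $x/\log x=o(x)$ terms. For each remaining $n$,
\[
g_k(n)\ge c\log n\ge c(\log x-\log\log x).
\]
The number of remaining $n$ is at least $x-o(x)$. Hence
\[
\sum_{n\le x}g_k(n)\ge c\,(x-o(x))(\log x-\log\log x)=c\,x\log x\,(1-o(1)).
\]
This gives $\liminf_{x\to\infty}\frac{1}{x\log x}\sum_{n\le x}g_k(n)\ge c$. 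Letting $\eps\to0$ yields the constant $3(k-1)/\log12$.

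The only point needing care is that the exceptional set in \cref{thm:main} is $o(x)$ but not quantified. This is harmless for two reasons. Nonnegativity of $g_k$ lets us drop exceptional terms in the lower bound. In the upper bound, exceptional $n$ are already covered by the pointwise bound, so no $o(x)\cdot\max g_k$ term has to be controlled separately. Apart from this, the argument consists of routine limits.
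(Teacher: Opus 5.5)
Your proposal is correct and follows the paper's route: drop the $o(x)$ exceptional set using $g_k\ge0$, sum the density-one bound from \cref{thm:main} and let $c\uparrow 3(k-1)/\log12$, and sum the pointwise bound from \cref{thm:upper} for the upper estimate. Two small differences, both fine: your witness $(n,1,\ldots,1)$, with excess $k-1$, is a valid and simpler replacement for the paper's case split on $n\ge k$ versus $n<k$. Your truncation at $n\le x/\log x$ spells out the step $\sum_{n\le x,\ g_k(n)\ge c\log n}\log n=x\log x+o(x\log x)$, which the paper asserts without detail.
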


Numerically,
\[
 \frac3{\log12}=1.2072888131\ldots,
 \qquad \frac1{\log2}=1.4426950409\ldots,
\]
so the proved lower coefficient is
\[
 \frac{3\log2}{\log12}=0.8368288369\ldots
\]
of the binary leading coefficient.  The interval between the two
leading coefficients motivates the questions in the final section.

\subsection{Analytic input and proof architecture}

The fixed-prime distribution theorem needed in the construction is a
normal-order statement along progressions with a growing \(S\)-unit
multiplier.  A positive integer is an \(S\)-unit if all of its prime
factors belong to the finite set \(S\).

\begin{theorem}[Uniform \(S\)-unit digit normal order]\label{thm:sunitnormal}
Let \(S\) and \(\mathcal P\) be disjoint finite sets of primes.  Fix
\(C>0\), \(0<c_0<C_0\), \(c_I>0\), and \(\eps>0\).  Let \(U\to\infty\),
let \(A\) be an \(S\)-unit with \(1\le A\le U^C\), let \(b\in\Z\)
satisfy \(|b|\le(\log U)^C\), and let
\(I\subset[c_0U,C_0U]\) be an interval of at least \(c_IU\)
consecutive integers such that \(Au+b\ge0\) for \(u\in I\).  Uniformly
in \(A,b,I\), all but \(o(U)\) integers \(u\in I\) satisfy,
simultaneously for every \(p\in\mathcal P\),
\begin{equation}\label{eq:sunitnormalintro}
 \left|s_p(Au+b)-\frac{p-1}{2}\log_p(AU)\right|
 \le \eps(p-1)\log_p(AU).
\end{equation}
\end{theorem}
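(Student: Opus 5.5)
The plan is a second-moment (Chebyshev) argument for each fixed prime $p\in\mathcal P$, followed by a union bound over the finitely many $p$. Set $N=Au+b$. Since $A\le U^C$ and $|b|\le(\log U)^C$, every $u\in I$ has $N\asymp AU$, so $N$ has $L=\log_p(AU)+O(1)$ base-$p$ digits. Write
\[
 s_p(N)=\sum_{0\le j<L}\delta_j(N),\qquad \delta_j(N)=\floor{N/p^j}-p\floor{N/p^{j+1}} .
\]
Discard a top window of $\eta L$ digit positions, which changes $s_p$ by at most $(p-1)\eta L$. Also discard a bottom window of $\eta L$ positions, which costs the same amount. The remaining positions $j\in J$ form a middle range. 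It then suffices to show
\[
 \sum_{u\in I}\Bigl(\sum_{j\in J}\bigl(\delta_j(Au+b)-\tfrac{p-1}{2}\bigr)\Bigr)^2=o(UL^2)
\]
uniformly in $A,b,I$. Chebyshev then gives $o(U)$ exceptions for each $p$, and choosing $\eta$ small relative to $\eps$ finishes.

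Expanding the square, I need two estimates for $j,j'\in J$:
\begin{itemize}
\item a first-moment equidistribution statement, that $\delta_j(Au+b)$ is asymptotically uniform on $\{0,\dots,p-1\}$;
\item a second-moment decorrelation statement, that the pair $(\delta_j,\delta_{j'})$ is asymptotically independent and uniform for all but $o(L^2)$ pairs $(j,j')$, for instance those with $|j-j'|\ge H$ for a slowly growing $H$.
\end{itemize}
By Fourier analysis on $\Z/p$, or via the indicator of $\{Au+b\bmod p^{j+1}\}$ lying in an interval, these reduce to exponential sums
\[
 \sum_{u\in I}e\bigl(h\,(Au+b)/p^{j+1}\bigr)\quad\text{and}\quad\sum_{u\in I}e\bigl((h(Au+b)/p^{j+1})+(h'(Au+b)/p^{j'+1})\bigr),
\]
with $p\nmid h$, together with boundary errors of size $O(p^{-\text{gap}})$ from smoothing the interval indicators. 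Each sum is a geometric series in $u$. It is $o(U)$ as soon as the frequency $\theta=hA/p^{j+1}$, respectively $\theta=hA/p^{j+1}+h'A/p^{j'+1}$, satisfies $U\|\theta\|\to\infty$.

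The main obstacle is this phase separation. One must show that, uniformly in the $S$-unit $A$, the quantity $\|hA/p^{j+1}\|$ (and its two-frequency analogue) is not smaller than $U^{-1}\omega(U)$ for all but $o(L)$ single indices $j$ and $o(L^2)$ pairs $(j,j')$. Since $p\notin S$, $A$ is a $p$-adic unit, so $hA/p^{j+1}$ is never an integer. The danger is that $A\bmod p^{j+1}$ is pathologically close to $0$ in the archimedean sense for many $j$, i.e.\ that $A$ has long runs of $0$ or $p-1$ digits in base $p$ at middle positions.

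I would handle this with the finite exceptional-subspace alternative of Drmota--Spiegelhofer, i.e.\ the $p$-adic Subspace Theorem applied to the $S\cup\{p\}$-unit equation. A small $\|hA/p^{j+1}\|$ means $hA=mp^{j+1}+r$ with $|r|$ tiny, which is a near-solution of a three-term $S'$-unit relation. The Subspace Theorem confines such near-solutions to finitely many subspaces. From this I would deduce that, for $A\le U^C$, the number of bad $j$ is $O(\log\log U)$, or more generally $o(L)$, uniformly in $A$; the pair case is handled the same way with four terms. The bounds $|h|,|h'|\le H$ and $|b|\le(\log U)^C$ are what keep the heights controlled, and the positions removed by the bottom window absorb the shift $b$ together with any carry propagation from it.

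With both moment estimates in hand, the variance bound follows. The error terms from the windows and from the bad indices are $O(\eta L)+o(L)$, and a union bound over $p\in\mathcal P$ yields \eqref{eq:sunitnormalintro}.
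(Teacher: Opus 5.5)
Your plan follows the paper's proof of this theorem. The paper also uses Chebyshev on a central window of digits, reduces one- and two-digit correlations to geometric sums in $u$, bounds close pairs trivially, and gets phase separation from the Drmota--Spiegelhofer exceptional-subspace alternative. The gap is the step you yourself call the main obstacle and then only assert: ``the Subspace Theorem confines such near-solutions to finitely many subspaces; from this I would deduce that the number of bad $j$ is $O(\log\log U)$.'' That deduction does not follow. Knowing that near-solutions lie in a fixed finite union of proper subspaces bounds nothing about how many $j$ are bad, uniformly in $A$ and in Fourier coefficients that grow with $U$. You must show that vectors of the specific shape $(Ap^{m_1}h_1,\dots,Ah_d,-p^mH)$ cannot lie in a fixed exceptional hyperplane at admissible positions. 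The paper does this in \cref{lem:sparsealternative} and \cref{lem:phase}, and stresses that it must be proved directly rather than read off from Drmota--Spiegelhofer's Lemma~3.4, which fixes the coefficients first. Your $O(\log\log U)$ count has no argument behind it.

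Three ingredients are missing.
\begin{enumerate}
\item \textbf{Nonexceptional branch.} Primitively normalize while keeping $p\nmid h_i$. Then the product formula over $\{\infty,p\}\cup S$ turns the Subspace inequality into $|F|\,|h_1\cdots h_dH|\ge\|\mathbf x\|_\infty^{1-\rho}$. Since $p^m|H|\le\|\mathbf x\|_\infty\le U^{C_1}$, this gives $\|\alpha\|\ge U^{-C_1\rho}/|h_1\cdots h_d|$ off the exceptional set.
\item \textbf{The case $H=0$.} Here the Subspace inequality gives nothing, and you need the \emph{top} window: $\|\alpha\|\ge A/p^{j_d+1}\ge p^{-1}U^{-1+\eta}$.
\item \textbf{Each fixed exceptional hyperplane.} Compare $p$-adic valuations. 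For $d=1$, $c_1A'h_1'=c_2p^mH'$ forces $j+1\le\nu_p(c_1)$. For $d=2$, if $c_2\ne0$, reducing modulo $p^{j_2-j_1}$ forces $j_2-j_1\le\nu_p(c_2)$; if $c_2=0$, it forces $j_1+1\le\nu_p(c_1)-\nu_p(c_3)$.
\end{enumerate}
With $j\ge\eta L$ and separation at least $H\to\infty$, every case in (iii) is impossible for large $U$. So no central index or well-separated pair is bad at all, and the ``$o(L)$ bad indices'' bookkeeping is unnecessary.

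This is the real purpose of your bottom window. It is not needed to absorb the shift $b$: $b$ contributes only a unimodular factor $e(hb/p^{j+1})$, and carries from $b$ are not confined to low positions anyway.

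Finally, frequencies divisible by $p$ must first be normalized by shifting positions, as the paper does. This is harmless, because the shift is at most $\log_pH$, well below your separation threshold $H$.
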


The proof has three main components.
\begin{enumerate}
\item A coefficient-uniform phase-separation lemma for one or two
interior frequencies is derived from the finite exceptional-subspace
alternative of Drmota and Spiegelhofer; see \cref{lem:phase}.
\item Fourier discrepancy estimates and a variance calculation turn
that separation into the uniform fixed-prime normal-order theorem
\cref{thm:sunitnormal}.
\item A mixed binary--ternary representation, full-box affine digit
estimates, and a large-prime Kummer sieve combine the fixed-prime
information into the factorial construction proving \cref{thm:main}.
\end{enumerate}
The logical dependencies may be summarized as
\begin{center}
\small
\begin{tabular}{c}
Subspace alternative $\Longrightarrow$ interior phase separation \\
$\Longrightarrow$ one/two digit patterns $\Longrightarrow$ $S$-unit normal order, \\
interval concentration $+$ quotient pullback $\Longrightarrow$ two-block estimate, \\
representation count $+$ carry propagation \\
$\Longrightarrow$ all-prime factorial divisibility.
\end{tabular}
\end{center}

The coefficient can be read from a two-resource optimization.  Put
\(h=k-1\).  If \(\lambda_2\) and \(\lambda_3\) are the total
logarithmic resources assigned to powers of \(2\) and \(3\), then the
leading binary and ternary digit budgets are
\[
 \frac{h+\lambda_2}{2\log2}\log n,
 \qquad
 \frac{h+\lambda_3}{\log3}\log n.
\]
The exact-representation argument requires
\(\lambda_2+\lambda_3<h\).  At the formal endpoint, balancing both
budgets at \(c\log n\) gives
\[
 \lambda_2=2c\log2-h,
 \qquad
 \lambda_3=c\log3-h,
\]
and exhausting the representation resource yields
\(c\log12=3h\).  The proof works with strict inequalities throughout
and approaches this endpoint only after all error parameters have been
fixed.

\subsection{Relation to previous work}

Pomerance's Theorem~1 \cite{Pomerance2026} proves that, for every fixed
\(\vartheta<1/\log4\), the rising product
\((m+1)\cdots(m+\ell)\) divides \(\binom{2m}{m}\) for all
\(\ell\le\vartheta\log m\) on a density-one set of \(m\).  Appendix
Corollary~2 of Sothanaphan \cite{Sothanaphan2026} gives the same leading
range effectively.  These are centered constructions inside a middle
binomial coefficient.  The construction here is noncentral: it
averages over many exact mixed-radix representations and allocates
binary and ternary digit resources simultaneously.

Related central-binomial work includes the prime-factor analysis of
Erd\H{o}s--Graham--Ruzsa--Straus
\cite{ErdosGrahamRuzsaStraus1975}, Pomerance's earlier divisor theorem
\cite{Pomerance2015}, Sanna's density estimates \cite{Sanna2018}, and
the positive-density divisibility results of Ford--Konyagin
\cite{FordKonyagin2021}.  Croot--Mousavi--Schmidt and Bloom--Croot study
simultaneous low-carry or small-digit restrictions in several bases
\cite{CrootMousaviSchmidt2024,BloomCroot2025}.  On the digital side,
Spiegelhofer's binary--ternary collision theorem and the work of
Spiegelhofer--Stoll on arithmetic progressions provide nearby but
distinct distribution results
\cite{Spiegelhofer2023,SpiegelhoferStoll2020}.

Drmota and Spiegelhofer prove substantially stronger joint
binary--ternary limit theorems, uniformly under the corresponding
prime-power rarefactions \cite{DrmotaSpiegelhofer2025}.  We use their
Lemma~3.3 as the Subspace-Theorem input and derive from it the
coefficient-uniform one- and two-frequency estimate required here.
Because Lemma~3.4 fixes the coefficients before introducing its
threshold, the derivation below supplies the uniformity needed for the
polylogarithmically varying Fourier coefficients.

While this manuscript was in preparation, SamKorsky independently
announced on the Erdős Problems forum \cite{Bloom400} (posts of
16--20 June 2026) the same density-one lower bound with leading
coefficient \(\frac{3(k-1)}{\log 12}\), obtained by optimizing a mixed
binary--ternary resource allocation that reaches the identical natural
endpoint \(c\log 12 = 3(k-1)\) of the method.  The present preprint was
posted on 22 June 2026.  We thank SamKorsky for the parallel progress
and for the valuable discussion thread on the forum.

\subsection{Conventions and organization}

Throughout, \(\log\) denotes the natural logarithm and \(\log_p\) the
logarithm to base \(p\).  For a nonnegative integer \(m\), \(s_p(m)\)
denotes the sum of the base-\(p\) digits of \(m\), and \(\nu_p(m)\)
denotes its \(p\)-adic valuation when \(m\ne0\).  Implied constants may
depend on fixed parameters indicated in their subscripts but not on the
asymptotic variable.  A detailed notation table and the uniformity
conventions for every principal estimate appear in \cref{sec:uniformity}.

The next section fixes notation and uniformity.  The exact valuation
identities and \cref{thm:upper} are proved in Section~3.  Sections~4
and~5 develop the elementary and fixed-prime digit estimates.  Sections
6 and~7 establish the representation and resource-allocation tools,
and Section~8 proves \cref{thm:main}.  The final section records the
summatory consequence, comparisons, and further directions.  Two
appendices give the parameter ledger and an application-by-application
hypothesis audit.

\section{Notation and uniformity conventions}\label{sec:uniformity}

The following symbols are kept distinct throughout.
\begin{center}
\small
\begin{tabular}{@{}p{0.18\textwidth}p{0.72\textwidth}@{}}
\toprule
Symbol & Meaning \\
\midrule
$n$ & factorial target in $g_k(n)$ \\
$N$ & a generic sum $a_1+\cdots+a_k$, or a sample-size parameter only when explicitly stated \\
$X$ & dyadic scale, with $n\asymp X$ \\
$\lambda_2,\lambda_3$ & total binary and ternary logarithmic resources \\
$M_i$ & a pure-power multiplier in the factorial construction \\
$U_i=X/M_i$ & parameter scale associated with $M_i$ \\
$A$ & a generic $S$-unit multiplier in \cref{thm:sunitnormal} \\
$b$ & an additive shift in an affine progression \\
$L$ & a base-$p$ digit length \\
$B$ & exponent in the polylogarithmic prime cutoff $(\log X)^B$ \\
$\mathfrak q(t)$ & an integer quotient map; never a prime \\
\bottomrule
\end{tabular}
\end{center}

The following ledger records the variables over which the principal
estimates are uniform; all data not displayed in its middle column are
fixed before the asymptotic variable tends to infinity.
\begin{center}
\small
\begin{tabular}{@{}p{0.28\textwidth}p{0.42\textwidth}p{0.20\textwidth}@{}}
\toprule
Estimate & Uniform variables & Limit \\
\midrule
\cref{lem:intervalconcentration,lem:targetupper} &
$p\le(\log X)^A$, interval or target & $X\to\infty$ \\
\cref{lem:twoblock} &
$p,M,U,I,d$ satisfying its displayed scale conditions & $X\to\infty$ \\
\cref{lem:phase,prop:patterns} &
$S$-unit $A$, central positions, polylogarithmic frequencies, shift $b$, interval $I$ & $U\to\infty$ \\
\cref{thm:sunitnormal} &
$A,b,I$ and the finite target-prime set $\mathcal P$ & $U\to\infty$ \\
\cref{lem:representations} &
target $m$ throughout one fixed target subinterval & $X\to\infty$ \\
\cref{lem:largeprime} &
all parameter boxes satisfying the fixed lower scale $U_i\ge X^\delta$ & $X\to\infty$ \\
\bottomrule
\end{tabular}
\end{center}
The phrase ``uniformly all but $o(U)$'' has the following fixed meaning:
there is a function $r(U)\to0$, depending only on the fixed data
displayed in the relevant statement, such that every admissible
exceptional set has cardinality at most $r(U)U$.  Analogous language at
scale $X$ is interpreted in the same way.

In the main proof, $k,c$ and all resource exponents are fixed first;
then the finite prime cutoff $P$ and the polylogarithmic exponent $B$
are fixed; only afterward does $X\to\infty$.  No $o(1)$ hides a limit
in $P$, $B$, a target subinterval, or a resource exponent.  The
complete order of parameter choices is repeated in
Appendix~\ref{app:ledger}.

\section{Factorial divisibility, digit sums, and the pointwise upper bound}

We begin with exact identities.  Legendre's formula states that, for every prime $p$ and every nonnegative integer $m$,
\begin{equation}\label{eq:legendre}
 \nu_p(m!)=\frac{m-s_p(m)}{p-1}.
\end{equation}

\begin{lemma}[Exact digit criterion]\label{lem:digitcriterion}
Suppose that $a_1,\ldots,a_k,n,v$ are nonnegative integers satisfying
\[
 a_1+\cdots+a_k=n+v.
\]
Then
\[
 a_1!\cdots a_k!\mid n!
\]
if and only if, for every prime $p$,
\begin{equation}\label{eq:digitcriterion}
 \sum_{i=1}^k s_p(a_i)-s_p(n)\ge v.
\end{equation}
Equivalently,
\begin{equation}\label{eq:risingmultinomial}
 (n+1)(n+2)\cdots(n+v)
 \ \bigg|\ 
 \binom{n+v}{a_1,\ldots,a_k}.
\end{equation}
\end{lemma}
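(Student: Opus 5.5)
The plan is to reduce divisibility to a comparison of $p$-adic valuations, one prime at a time, and then evaluate each valuation with Legendre's formula \eqref{eq:legendre}. A positive integer divides another exactly when, for every prime $p$, its $p$-adic valuation is no larger. Hence $a_1!\cdots a_k!\mid n!$ holds if and only if
\[
 \sum_{i=1}^k \nu_p(a_i!)\le \nu_p(n!)\qquad\text{for every prime }p.
\]
By \eqref{eq:legendre}, the left side equals $\bigl(\sum_i a_i-\sum_i s_p(a_i)\bigr)/(p-1)$ and the right side equals $(n-s_p(n))/(p-1)$. We multiply through by $p-1>0$ and substitute $\sum_i a_i=n+v$. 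The condition then becomes $n+v-\sum_i s_p(a_i)\le n-s_p(n)$, which rearranges exactly to \eqref{eq:digitcriterion}. Every step is reversible, so this gives the equivalence directly. No valuation is taken at $0$, because all factorials are positive, including $0!=1$.

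For the second equivalence, we rewrite the multinomial coefficient:
\[
 \binom{n+v}{a_1,\ldots,a_k}=\frac{(n+v)!}{a_1!\cdots a_k!}
 =\frac{(n+1)\cdots(n+v)\, n!}{a_1!\cdots a_k!}.
\]
This is an integer because $\sum a_i=n+v$. Write $R=(n+1)\cdots(n+v)$, which is a positive integer; the empty product gives $R=1$ when $v=0$. The multinomial coefficient is then $R\cdot\bigl(n!/(a_1!\cdots a_k!)\bigr)$. Thus $R$ divides it if and only if $n!/(a_1!\cdots a_k!)$ is an integer. To see this, we can compare $p$-adic valuations: $\nu_p$ of the multinomial minus $\nu_p(R)$ equals $\nu_p(n!)-\sum_i\nu_p(a_i!)$. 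Alternatively, we can observe directly that $R\mid R\cdot q$ for a rational $q$ exactly when $q\in\Z$. This yields \eqref{eq:risingmultinomial}.

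There is no real obstacle here. The only point needing care is the bookkeeping of the sign and of the factor $p-1$ in Legendre's formula. We should also note that the criterion is required only for primes $p\le n+v$; for larger $p$, every $s_p(m)$ equals $m$, so the inequality reads $\sum a_i-n\ge v$ and holds with equality. The statement as written over all primes is therefore consistent.
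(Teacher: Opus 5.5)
Your proof is correct and matches the paper's argument: Legendre's formula together with $\sum_i a_i=n+v$ turns the prime-by-prime valuation comparison into \eqref{eq:digitcriterion}. The factorization of the multinomial coefficient as $(n+1)\cdots(n+v)\cdot n!/(a_1!\cdots a_k!)$ then gives the rising-product form, and your remark that only primes $p\le n+v$ carry content is a correct, harmless addition.
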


\begin{proof}
By \eqref{eq:legendre} and $\sum_i a_i=n+v$,
\[
 (p-1)\left(\nu_p(n!)-\sum_{i=1}^k\nu_p(a_i!)\right)
 =\sum_{i=1}^k s_p(a_i)-s_p(n)-v.
\]
Thus the factorial divisibility is equivalent to \eqref{eq:digitcriterion} for every prime $p$.  Moreover,
\[
 \frac{1}{(n+1)\cdots(n+v)}
 \binom{n+v}{a_1,\ldots,a_k}
 =\frac{n!}{a_1!\cdots a_k!},
\]
which proves the second equivalence.
\end{proof}

For completeness, we derive the multinomial form of Kummer's theorem
directly.  Write $a_{i,j}$ and $N_j$ for the base-$p$ digits of $a_i$
and $N=\sum_i a_i$, and let $c_j$ be the carry entering column $j$.
Thus $c_0=0$, all but finitely many $c_j$ vanish, and
\[
 c_j+\sum_{i=1}^k a_{i,j}=N_j+pc_{j+1}.
\]
Summing this identity over all columns gives
\begin{equation}\label{eq:carryidentity}
 \sum_{i=1}^k s_p(a_i)-s_p(N)
 =(p-1)\sum_{j\ge1}c_j.
\end{equation}
On the other hand, Legendre's formula gives
\begin{equation}\label{eq:kummerdigit}
 \nu_p\binom{N}{a_1,\ldots,a_k}
 =\frac{\sum_i s_p(a_i)-s_p(N)}{p-1}.
\end{equation}
Consequently the valuation equals the total carry mass
$\sum_{j\ge1}c_j$.  This is the multinomial Kummer theorem, here
obtained from the digit identity; see also Kummer's original work
\cite{Kummer1852}.  We need a particularly simple binary consequence.

\begin{lemma}[Binary carry capacity]\label{lem:binarycapacity}
Let $a_1+\cdots+a_k=N$ and put $L=\floor{\log_2N}$.  Then
\[
 \nu_2\binom{N}{a_1,\ldots,a_k}\le (k-1)L.
\]
\end{lemma}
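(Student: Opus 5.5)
We plan to use the carry identity \eqref{eq:carryidentity} with $p=2$ together with \eqref{eq:kummerdigit}. Since $p-1=1$, these give
\[
 \nu_2\binom{N}{a_1,\ldots,a_k}=\sum_{i=1}^k s_2(a_i)-s_2(N),
\]
so it suffices to bound this digit-sum difference by $(k-1)L$.

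We may assume $N\ge1$; if $N=0$ every $a_i$ vanishes and the valuation is zero. Every $a_i$ satisfies $0\le a_i\le N<2^{L+1}$, so each $a_i$ has at most $L+1$ binary digits. Hence $s_2(a_i)\le L+1$ for each $i$.

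The key step is to single out one summand. If all $a_i=0$ we are done, so suppose otherwise and order the summands so that $a_1$ is the largest, or simply pick any index. A crude bound gives
\[
 \sum_i s_2(a_i)-s_2(N)\le k(L+1)-1,
\]
since $s_2(N)\ge1$. This falls short of the target by $L+k-1$, so we need a sharper comparison than the trivial one.

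For the refinement we use subadditivity of $s_2$, namely $s_2(x+y)\le s_2(x)+s_2(y)$, which is equivalent to the nonnegativity of the carry count. Write $N=a_1+R$ with $R=a_2+\cdots+a_k$. Then
\[
 \sum_i s_2(a_i)-s_2(N)=\bigl(s_2(a_1)+s_2(R)-s_2(N)\bigr)+\Bigl(\sum_{i\ge2}s_2(a_i)-s_2(R)\Bigr).
\]
The second bracket is the valuation of a $(k-1)$-fold multinomial with total $R\le N$. By induction on $k$ it is at most $(k-2)L$, with the base case $k=1$ being trivially $0$. The first bracket is the number of carries in the binary addition $a_1+R$.

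It therefore remains to show that this two-term carry count is at most $L$. A carry can leave column $j$ only into column $j+1$, and every carry enters some column $j+1$ with $1\le j+1\le L$. Indeed, a carry entering column $L+1$ would force $N\ge 2^{L+1}$: a carry out of column $L$ contributes at least $2^{L+1}$ to the sum of the truncations, and since the top digits are nonnegative this gives $N\ge2^{L+1}$, a contradiction. In the two-term case each carry $c_j\in\{0,1\}$, because $c_j+a_{1,j}+R_j\le 3$. Hence the number of carries is at most the number of columns $1,\ldots,L$, which is $L$.

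This carry-range argument is the only point requiring care. The step I expect to be delicate is justifying that no carry enters column $L+1$ or beyond. This follows because the total equals $N<2^{L+1}$, while a nonzero carry into column $j$ certifies that $\sum_i (a_i \bmod 2^{j})\ge 2^{j}$, and hence $N\ge 2^j$, so $j\le L$.

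Combining the two brackets gives $L+(k-2)L=(k-1)L$, as required.
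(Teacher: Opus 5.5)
Your proof is correct, but it is organized differently from the paper's. The paper treats all $k$ summands at once. It runs the $k$-term binary carry recursion $c_j+\sum_i a_{i,j}=N_j+2c_{j+1}$, observes inductively that $0\le c_j\le k-1$, notes that no carry survives past column $L$, and reads off $\nu_2=c_1+\cdots+c_L\le(k-1)L$ directly. You instead peel off one summand via the additive splitting $\sum_i s_2(a_i)-s_2(N)=\bigl(s_2(a_1)+s_2(R)-s_2(N)\bigr)+\bigl(\sum_{i\ge2}s_2(a_i)-s_2(R)\bigr)$, which is the digit form of $\binom{N}{a_1,\ldots,a_k}=\binom{N}{a_1}\binom{R}{a_2,\ldots,a_k}$, and then induct on $k$. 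This way you only ever need the two-summand fact that carries lie in $\{0,1\}$.

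Both arguments rest on the same observation, that carries are confined to columns $1,\ldots,L$. You justify this more explicitly than the paper's one-line remark: $c_j\ge1$ forces $\sum_i(a_i\bmod 2^j)\ge2^j$, hence $N\ge2^j$.

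The paper's route is shorter and avoids induction. Yours, when unrolled, gives the slightly sharper bound $\sum_{i=1}^{k-1}\lfloor\log_2(a_i+\cdots+a_k)\rfloor$ for any ordering of the summands, though that refinement is not needed for \cref{thm:upper}.

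Two small bookkeeping remarks. When you invoke the induction hypothesis, state that $\lfloor\log_2R\rfloor\le L$ because $R\le N$, and that the case $R=0$ falls under your $N=0$ convention. The ``crude bound'' paragraph and the remark about ordering the summands play no role and can be dropped.
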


\begin{proof}
Write $a_{i,j}\in\set{0,1}$ and $N_j\in\set{0,1}$ for the binary digits in column $j$, and let $c_j$ be the carry entering column $j$.  Then $c_0=0$ and
\[
 c_j+\sum_{i=1}^k a_{i,j}=N_j+2c_{j+1}.
\]
Inductively, $0\le c_j\le k-1$ for every $j$.  Since the top nonzero digit of $N$ is in column $L$ and each $a_i\le N$, the carry leaving that column is $c_{L+1}=0$.  Kummer's theorem gives
\[
 \nu_2\binom{N}{a_1,\ldots,a_k}
 =c_1+\cdots+c_L\le (k-1)L.
\]
\end{proof}

\begin{proof}[Proof of \cref{thm:upper}]
Consider an admissible tuple with excess $v\ge0$, and put $N=n+v$.  Since $a_i!\mid n!$, one has $a_i\le n$ for each $i$, and hence $N\le kn$.  By \cref{lem:digitcriterion},
\[
 v\le \sum_i s_2(a_i)-s_2(n).
\]
Insert $s_2(N)$ and apply \cref{lem:binarycapacity}:
\[
 v\le (k-1)\floor{\log_2N}+s_2(N)-s_2(n).
\]
The binary digit sum is subadditive, so $s_2(n+v)\le s_2(n)+s_2(v)$.  Therefore
\begin{equation}\label{eq:upperintermediate}
 v\le (k-1)\log_2(kn)+s_2(v).
\end{equation}
For $v\ge1$, the elementary estimate $s_2(v)\le 1+\log_2v$ first implies from \eqref{eq:upperintermediate} that $v=O_k(\log n)$.  Substituting this information back into \eqref{eq:upperintermediate} gives
\[
 v\le (k-1)\log_2n+\log_2\log n+O_k(1).
\]
Taking the maximum over admissible tuples completes the proof.
\end{proof}

\section{Quantitative elementary digit estimates}

This section makes the elementary estimates sufficiently quantitative
to be summed over every prime below a polylogarithmic cutoff.  The
basic input is Hoeffding's inequality \cite{Hoeffding1963}.

\begin{lemma}[Digit-word concentration]\label{lem:wordconcentration}
For every $\eps>0$ there is $c_\eps>0$ such that, for every prime $p$
and every integer $L\ge1$, the number of $0\le m<p^L$ satisfying
\[
 \left|s_p(m)-\frac{p-1}{2}L\right|>\eps(p-1)L
\]
is at most $2p^L\exp(-c_\eps L)$.
\end{lemma}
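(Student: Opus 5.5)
The plan is to view a uniformly random integer $0\le m<p^L$ as a word of $L$ independent base-$p$ digits $d_0,\ldots,d_{L-1}$. Each digit is uniformly distributed on $\set{0,1,\ldots,p-1}$, and
\[
 s_p(m)=\sum_{j=0}^{L-1}d_j .
\]
This representation is an exact bijection between $[0,p^L)$ and $\set{0,\ldots,p-1}^L$, so counting $m$ with a deviant digit sum is the same as $p^L$ times the probability of a deviation for a sum of $L$ i.i.d. uniform variables. Each digit has mean $(p-1)/2$, so $\E s_p(m)=\frac{p-1}{2}L$, which is exactly the centring in the statement.

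The key step is to apply Hoeffding's inequality \cite{Hoeffding1963} to the bounded independent variables $d_j\in[0,p-1]$. For a deviation $t$ it gives
\[
 \Pr\Bigl(\Bigl|\sum_j d_j-\tfrac{p-1}{2}L\Bigr|>t\Bigr)
 \le 2\exp\!\left(-\frac{2t^2}{L(p-1)^2}\right).
\]
We choose $t=\eps(p-1)L$. The factor $(p-1)^2$ cancels, and the right-hand side becomes $2\exp(-2\eps^2L)$. Thus we may take $c_\eps=2\eps^2$, independent of $p$ and $L$. Multiplying by $p^L$ yields the stated count bound $2p^L\exp(-c_\eps L)$.

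There is no real obstacle here. The only point worth checking is uniformity in $p$, which is exactly what the cancellation of the digit range $(p-1)$ against the scaling $\eps(p-1)L$ provides. That uniformity is what allows the later summation over all primes $p\le(\log X)^B$. If $p=2$ or $\eps\ge 1/2$ makes the event empty, the bound holds trivially.
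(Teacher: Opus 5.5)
Your proof is correct and is the same as the paper's: the paper also identifies $[0,p^L)$ with $L$ independent uniform base-$p$ digits and applies Hoeffding's inequality after normalizing by $p-1$, and this gives your explicit constant $c_\eps=2\eps^2$. Your closing remark is slightly off, since for $p=2$ the event is not empty in general (only $\eps\ge1/2$ forces that), but this does not matter because Hoeffding already covers every $p$.
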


\begin{proof}
For uniform $m\in[0,p^L)$, the $L$ base-$p$ digits are independent and
uniform on $\{0,\ldots,p-1\}$.  Divide each digit by $p-1$ and apply
Hoeffding's inequality.
\end{proof}

\begin{lemma}[Quantitative concentration on intervals]
\label{lem:intervalconcentration}
Fix $A,\delta,\eps,D>0$.  There is $c>0$ with the following property.
Let $p\le(\log X)^A$, and let $J\subset\Z_{\ge0}$ be an interval of
$H\ge X^\delta$ consecutive integers.  If $E_p(J)$ is the set of
$m\in J$ for which
\begin{equation}\label{eq:intervallower}
 s_p(m)<\left(\frac12-\eps\right)(p-1)\log_pH,
\end{equation}
then
\begin{equation}\label{eq:intervalquant}
 \frac{|E_p(J)|}{H}
 \ll (\log X)^{-D}
 +\exp\!\left(-c\frac{\log X}{\log\log X}\right).
\end{equation}
The implied constant is uniform in $p$ and $J$.  Consequently, if
$D>A+2$, then
\[
 \sum_{p\le(\log X)^A}\frac{|E_p(J)|}{H}=o(1)
\]
uniformly over all such intervals.
\end{lemma}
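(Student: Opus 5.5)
Write $L=\lfloor\log_p H\rfloor$ and cover $J$ by complete digit blocks. Every interval $[jp^L,(j+1)p^L)$ is a full block of $p^L$ integers. On such a block the low $L$ digits of $m$ run through all words of length $L$ exactly once, and the high part is the constant $j$. Because $s_p(m)\ge s_p(m\bmod p^L)$, the lower-tail event \eqref{eq:intervallower} forces
\[
s_p(m\bmod p^L)<\left(\tfrac12-\eps\right)(p-1)\log_pH .
\]
The interval $J$ has length $H\ge p^L$, so it meets at most $H/p^L+2\le 3H/p^L$ such blocks. On each block, \cref{lem:wordconcentration} bounds the number of bad words. I then sum over blocks. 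There is one mismatch: the threshold uses $\log_pH$ rather than $L$, and $\log_pH-L<1$. So the threshold is at most $(\tfrac12-\eps)(p-1)(L+1)$. When $L$ is large in terms of $\eps$, this is below $(\tfrac12-\tfrac\eps2)(p-1)L$. I therefore apply \cref{lem:wordconcentration} with $\eps/2$.

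The result is
\[
|E_p(J)|\le \frac{3H}{p^L}\cdot 2p^L e^{-c_{\eps/2}L}=6He^{-c_{\eps/2}L},
\]
where
\[
L\ge \frac{\log H}{\log p}-1\ge \frac{\delta\log X}{A\log\log X}-1 .
\]
This gives the term $\exp(-c\log X/\log\log X)$, with $c$ depending on $\delta,A,\eps$. It is uniform in $p$ and $J$, because \cref{lem:wordconcentration} is uniform in $p$ and $L$.

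The requirement that $L$ be large is the only place where care is needed. It holds automatically once $X$ is large, since $L\gg\log X/\log\log X$. For the finitely many small $X$, the trivial bound $|E_p(J)|\le H$ is absorbed into the implied constant. The $(\log X)^{-D}$ term is then not even needed; it is dominated anyway, since $\exp(-c\log X/\log\log X)=o((\log X)^{-D})$ for every fixed $D$. I would keep it only to match the stated form.

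The consequence follows by summing the bound over at most $(\log X)^A$ primes. This gives
\[
(\log X)^{A}\left[(\log X)^{-D}+\exp\!\left(-c\frac{\log X}{\log\log X}\right)\right].
\]
This is $o(1)$ when $D>A+2$, because $(\log X)^{A}\exp(-c\log X/\log\log X)\to0$. The main obstacle, such as it is, lies in the block covering. The partial blocks at the two ends of $J$ must be bounded by full-block counts, which is legitimate because restricting a block can only decrease its bad count. The $+2$ overhead for these end blocks is harmless because $H\ge p^L$.
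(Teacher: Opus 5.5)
Your argument is correct and uses the same mechanism as the paper: decompose $J$ into aligned blocks of length $p^L$, apply \cref{lem:wordconcentration} to the low $L$ digits, and absorb the gap between $L$ and $\log_pH$ into the $\eps$ margin. The only difference is how the boundary is handled. The paper shrinks the block to the largest power of $p$ below $H/(\log X)^R$ and discards the two incomplete end pieces; this is the sole source of its $(\log X)^{-D}$ term and of an $O(R\log\log X/\log p)$ digit loss. You instead take $L=\lfloor\log_pH\rfloor$ and dominate each partial block by the full bad-word count, using that the low-digit map is injective on a block. This gives the cleaner bound $\ll\exp(-c\log X/\log\log X)$ with no polylogarithmic term.
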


\begin{proof}
Choose
\[
 R>D+A+4
\]
and let $P=p^L$ be the largest power of $p$ not exceeding
$H/(\log X)^R$.  Thus
\begin{equation}\label{eq:Pinterval}
 \frac{H}{p(\log X)^R}<P\le\frac{H}{(\log X)^R}.
\end{equation}
Except for two boundary pieces of total length less than $2P$, the
interval $J$ is a union of complete blocks of length $P$.  On every
complete block the lowest $L$ digits run through all base-$p$ words of
length $L$.  By \cref{lem:wordconcentration}, all but a proportion
$2e^{-c_\eps L}$ of each complete block have lower-digit sum at least
$(1/2-\eps/2)(p-1)L$.

From \eqref{eq:Pinterval},
\[
 L=\log_pH+O\!\left(\frac{R\log\log X}{\log p}+1\right).
\]
Because $H\ge X^\delta$ and $p\le(\log X)^A$,
\[
 L\ge \frac{\delta}{2A}\frac{\log X}{\log\log X}
\]
for all sufficiently large $X$ (with the evident easier interpretation
when $A<1$).  The difference between $L$ and $\log_pH$ is therefore
$o(\log_pH)$, uniformly in $p$.  Hence the preceding lower-digit bound
implies \eqref{eq:intervallower} for large $X$.

The boundary proportion is $O((\log X)^{-R})$, and the concentration
tail is
\[
 \ll\exp\!\left(-c\frac{\log X}{\log\log X}\right).
\]
This proves \eqref{eq:intervalquant}, after weakening the first term to
$(\log X)^{-D}$.  There are at most $(\log X)^A$ relevant primes, so
summation gives the final assertion.
\end{proof}

\begin{lemma}[Simultaneous upper bound for targets]
\label{lem:targetupper}
Fix $A,\eps>0$ and $0<c_0<C_0$.  There is $c>0$ such that all but
\[
 \ll X\exp\!\left(-c\frac{\log X}{\log\log X}\right)=o(X)
\]
integers $n\in[c_0X,C_0X]$ satisfy, simultaneously for every prime
$p\le(\log X)^A$,
\begin{equation}\label{eq:targetupper}
 s_p(n)\le\left(\frac12+\eps\right)(p-1)\log_pX.
\end{equation}
\end{lemma}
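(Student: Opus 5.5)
We will prove \cref{lem:targetupper} by a union bound over primes, handling each prime with the same block decomposition used in \cref{lem:intervalconcentration}, now applied to the upper tail. Fix a prime \(p\le(\log X)^A\) and let \(L_p=\floor{\log_p(C_0X)}+1\), so every \(n\in[c_0X,C_0X]\) satisfies \(n<p^{L_p}\). Thus \(s_p(n)\) is the digit sum of the length-\(L_p\) word of \(n\). Every \(n\) in the interval lies in \([0,p^{L_p})\), so the number of \(n\) with
\[
 s_p(n)>\left(\frac12+\frac\eps2\right)(p-1)L_p
\]
is at most the corresponding count over all of \([0,p^{L_p})\). By \cref{lem:wordconcentration} with \(\eps/2\), this count is at most \(2p^{L_p}\exp(-c_{\eps/2}L_p)\).

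Two points require checking, and the first is the main (mild) obstacle: the factor \(p^{L_p}\) can exceed \(X\) by a factor of up to \(pC_0\), and \(L_p\) differs from \(\log_pX\). For the first point, \(p^{L_p}\le pC_0X\le C_0X(\log X)^A\). For the second, \(L_p\ge\log_pX\ge \log X/(A\log\log X)\), so the tail is
\[
 \ll_{C_0} X(\log X)^A\exp\!\left(-c_{\eps/2}\frac{\log X}{A\log\log X}\right).
\]
For the threshold, \(L_p=\log_pX+O(1+\log_p C_0)\), which is \((1+o(1))\log_pX\) uniformly in \(p\le(\log X)^A\), because \(\log_pX\ge\log X/(A\log\log X)\to\infty\). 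Hence for large \(X\),
\[
 \left(\tfrac12+\tfrac\eps2\right)(p-1)L_p\le\left(\tfrac12+\eps\right)(p-1)\log_pX.
\]
Consequently any \(n\) violating \eqref{eq:targetupper} is counted in the tail above. If one prefers to avoid the enlargement factor \(p\), one can instead tile \([c_0X,C_0X]\) by complete blocks of length \(p^{L}\) with \(p^L\approx X/(\log X)^R\), exactly as in the proof of \cref{lem:intervalconcentration}. Doing so would bound the high digits crudely by \((p-1)(\log_pX-L)=O((p-1)R\log\log X/\log p)=o((p-1)\log_pX)\). This costs a boundary term \(O(X(\log X)^{-R})\), which is not of the stated exponential shape, so the direct whole-range count is the better choice.

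Finally, sum over the at most \((\log X)^A\) primes \(p\le(\log X)^A\). The total exceptional count is
\[
 \ll X(\log X)^{2A}\exp\!\left(-c'\frac{\log X}{\log\log X}\right),\qquad c'=\frac{c_{\eps/2}}{A}.
\]
Since \((\log X)^{2A}=\exp(2A\log\log X)\) and \(\log\log X=o(\log X/\log\log X)\), we can absorb this factor by taking \(c=c'/2\). This gives the bound \(\ll X\exp(-c\log X/\log\log X)=o(X)\), uniformly as claimed. The only delicacy is the uniformity in \(p\), and that is secured by the lower bound on \(\log_pX\) above.
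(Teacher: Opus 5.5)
Your proposal is correct and is essentially the paper's argument. Like the paper, you pad every $n$ to a fixed length $L_p\approx\log_p(C_0X)$, apply the Hoeffding word bound on all of $[0,p^{L_p})$, and absorb the padding factor $p$ and the $(\log X)^A$ primes into the exponential by shrinking $c$. One small slip: $L_p\ge\log_pX$ can fail when $C_0<1$, but you only need $L_p\ge\log_pX-O_{C_0}(1)$, which changes nothing.
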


\begin{proof}
For a fixed $p$, pad each $n<C_0X$ with leading zero digits to length
$L=\ceil{\log_p(C_0X)}$.  Lemma~\ref{lem:wordconcentration} gives an
exceptional proportion $\ll p e^{-c_\eps L}$.  Uniformly for
$p\le(\log X)^A$, one has
$L\gg_A\log X/\log\log X$.  The bounded change from $L$ to
$\log_pX$ is absorbed by the fixed $\eps$ margin.  The factors coming from padding, the number of primes, and the largest
base are fixed powers of $\log X$; reducing $c$ absorbs all of them into
the displayed exponential.  Summing over the indicated primes proves
the result.
\end{proof}

The next lemma is the growing-base estimate used later.  Its two digit
blocks are separated explicitly, and its error is strong enough for a
union over polylogarithmically many primes.

\begin{lemma}[Pullback through a monotone quotient map]\label{lem:pullback}
Let $I$ be an interval of $T$ consecutive integers, let $0<M<Q$, and
put
\[
 \mathfrak q(t)=\floor{\frac{Mt+d}{Q}}.
\]
Its image $J$ is an interval of consecutive integers.  If $H=|J|$,
then every non-endpoint value has $Q/M+O(1)$ preimages, and for every
subset $E\subset J$,
\begin{equation}\label{eq:pullback}
 \frac{|\{t\in I:\mathfrak q(t)\in E\}|}{T}
 \le \frac{|E|}{H}+O\!\left(\frac MQ+\frac1H\right).
\end{equation}
The implied constant is absolute.
\end{lemma}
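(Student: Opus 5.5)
The plan is a direct counting argument built on the monotonicity of $\mathfrak q$. Since $0<M$, the map $t\mapsto Mt+d$ is strictly increasing, so $\mathfrak q$ is nondecreasing on $I$. Moreover, consecutive values satisfy
\[
 \mathfrak q(t+1)-\mathfrak q(t)\le \floor{M/Q}+1=1,
\]
because $M<Q$. A nondecreasing integer sequence whose steps are $0$ or $1$ takes every integer between its first and last values, so the image $J$ is an interval of consecutive integers.

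Next I count fibres. For $j\in J$, the preimage $\mathfrak q^{-1}(j)\cap I$ consists of those $t\in I$ with
\[
 jQ\le Mt+d<(j+1)Q,
\]
that is, $t$ lies in a half-open real interval of length $Q/M$. Such an interval contains either $\floor{Q/M}$ or $\ceil{Q/M}$ integers. For a non-endpoint $j$ (neither $\min J$ nor $\max J$), this whole interval of $t$ lies strictly inside the span of $I$. Indeed, both $j-1$ and $j+1$ are attained by elements of $I$, and monotonicity pins the fibre between them. Hence every non-endpoint value has $Q/M+O(1)$ preimages, with absolute constant $1$. The two endpoint fibres have at most $\ceil{Q/M}$ preimages each.

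For the counting relation between $T$ and $H$, I sum the fibre sizes. If $H\ge 3$, then
\[
 T=(H-2)\bigl(Q/M+\theta\bigr)+r,\qquad |\theta|\le1,\quad 0\le r\le 2\ceil{Q/M}+2.
\]
Consequently $T\ge (H-2)(Q/M-1)$. Now take $E\subset J$. The number of $t\in I$ with $\mathfrak q(t)\in E$ is at most
\[
 |E|\bigl(Q/M+1\bigr),
\]
since the endpoint fibres are also bounded by $Q/M+1$. Dividing,
\[
 \frac{|\{t:\mathfrak q(t)\in E\}|}{T}\le \frac{|E|}{H}\cdot\frac{H}{H-2}\cdot\frac{Q/M+1}{Q/M-1}.
\]

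The only mildly delicate step is turning this ratio into an additive error uniformly. Writing $\rho=M/Q<1$, one has
\[
 \frac{1+\rho}{1-\rho}=1+O(\rho)\quad\text{when }\rho\le\tfrac12,\qquad \frac{H}{H-2}=1+O(1/H)\quad\text{when }H\ge4.
\]
Since $|E|/H\le1$, multiplicative factors of the form $1+O(\rho+1/H)$ become the additive error $O(M/Q+1/H)$. In the remaining degenerate ranges, $\rho>1/2$ or $H\le3$, the right-hand side of \eqref{eq:pullback} exceeds $1$ once the absolute constant is at least $3$. The left side never exceeds $1$, so the inequality is trivial there.

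I expect no real obstacle. The only points needing care are the endpoint fibres, which may be truncated by the ends of $I$, and the degenerate regimes; both are handled by the bounds above. A cleaner variant of the same argument adds the at most $2(Q/M+1)$ endpoint preimages separately, giving error $O(1/H)$ directly.
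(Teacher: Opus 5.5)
Your proof is correct and follows the paper's route. You count the fibres of $\mathfrak q$, note that only the two endpoint fibres can be truncated by $I$, and compare $|\mathfrak q^{-1}(E)|\le|E|(Q/M+1)$ with $T\ge(H-2)(Q/M-1)$. These are one-sided forms of the paper's relations $|\mathfrak q^{-1}(E)|=|E|Q/M+O(|E|+Q/M)$ and $T=HQ/M+O(H+Q/M)$.

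You are more explicit than the paper in two places: you check that the image is an interval, and you dispose of the regimes $M/Q>1/2$ or $H\le3$, where the right-hand side already exceeds $1$. One small point about your closing variant: it only sharpens the endpoint contribution, and the $M/Q$ term itself cannot be dropped. For instance, take $Q/M=3/2$ and let $E$ be the fibres of size $2$; then the left side is about $2/3$ while $|E|/H$ is about $1/2$, for every $H$.
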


\begin{proof}
For a fixed integer $y$, the condition $\mathfrak q(t)=y$ cuts out an
interval of real length $Q/M$, so its number of integer points is
$Q/M+O(1)$ unless it is truncated by an endpoint of $I$.  At most two
image values are truncated.  Consequently
\[
 |\mathfrak q^{-1}(E)|=|E|\frac QM+O\!\left(|E|+\frac QM\right)
\]
and
\[
 T=H\frac QM+O\!\left(H+\frac QM\right).
\]
Divide the two relations and use $|E|\le H$.
\end{proof}

\begin{lemma}[Quantitative two-block affine estimate]
\label{lem:twoblock}
Fix $A_0,A_1,D,\eps,\delta_0>0$ and positive constants
$c_I,C_I,c_X,C_X$.  Let $a,u\in(0,1)$ satisfy
\[
 a+u=1,
 \qquad \min\{a,u,|a-u|\}\ge\delta_0.
\]
Suppose that $M$ is a positive integer and $U$ is a positive real number satisfying
\begin{align*}
 X^a(\log X)^{-A_1}&\le M\le X^a(\log X)^{A_1},\\
 X^u(\log X)^{-A_1}&\le U\le X^u(\log X)^{A_1}.
\end{align*}
and $MU\asymp X$, with constants independent of $X$.  Let $I$ be an
interval of consecutive integers with
$c_IU\le|I|\le C_IU$, and let $d\in\Z$ satisfy
$|d|\le(\log X)^{A_1}$ and
\[
 c_XX\le Mt+d\le C_XX\qquad(t\in I).
\]
For every prime $p\le(\log X)^{A_0}$ with $p\nmid M$, let
\[
 E_p=\left\{t\in I:
 \frac{s_p(Mt+d)}{p-1}<
 \left(\min\left\{u,\frac12\right\}-\eps\right)\log_pX
 \right\}.
\]
Then there is $c>0$ such that, uniformly in all the displayed data,
\begin{equation}\label{eq:twoblockquant}
 \frac{|E_p|}{U}
 \ll (\log X)^{-D}
 +\exp\!\left(-c\frac{\log X}{\log\log X}\right).
\end{equation}
In particular, when $D>A_0+2$, the union of the $E_p$ over
$p\le(\log X)^{A_0}$ has size $o(U)$.  The statement includes the
shift $d=-1$ used in the factorial construction.
\end{lemma}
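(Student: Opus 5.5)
Write $m=Mt+d$ and split the base-$p$ expansion of $m$ at a suitable power $Q=p^L$. The bound asks only for about $\min\{u,1/2\}\log_pX$ digit mass, and there are two natural sources for it. One is the low block of $L\approx u\log_pX$ digits, $m\bmod p^L$. The other is the high block $\mathfrak q(t)=\floor{(Mt+d)/Q}$, which is controlled by \cref{lem:pullback} together with \cref{lem:intervalconcentration}. We use whichever block is equidistributed, according to whether $u<a$ or $u>a$; the hypothesis $|a-u|\ge\delta_0$ keeps us away from the boundary case.

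\emph{Case $u<a$ (so $u<1/2$).} Take the low block. Since $p\nmid M$, the map $t\mapsto Mt+d \bmod p^L$ is a bijection on any $p^L$ consecutive values of $t$. Choose $p^L$ to be the largest power of $p$ not exceeding $U/(\log X)^R$, with $R>D+A_0+4$. Then $I$ splits into complete blocks of length $p^L$ plus a boundary piece of relative size $O((\log X)^{-R})$. On each complete block, the residues $Mt+d \bmod p^L$ run over all base-$p$ words of length $L$. \Cref{lem:wordconcentration} then bounds the proportion with low-block digit sum below $(1/2-\eps/2)(p-1)L$ by $2e^{-c_\eps L}$. We have $L=u\log_pX+O(A_1\log\log X/\log p+R\log\log X/\log p+1)$, and $L\gg \log X/\log\log X$ uniformly for $p\le(\log X)^{A_0}$. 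Hence $L/2\ge(u/2-\eps/2)\log_pX\ge (u-\eps)\log_pX$ for large $X$; this uses $u/2\le u$, and it is even better than needed, because we only need $u-\eps$ and $u\le1/2$. Since $s_p(m)\ge s_p(m\bmod p^L)$, this proves \eqref{eq:twoblockquant} in this case. In fact the same argument gives $(u/2-\eps)$ in all cases, so the real content lies in the other regime.

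\emph{Case $u>a$ (target $1/2-\eps$).} The low block yields only $u/2$ digits, so we also use the high block. Take $Q=p^{L}$ with $p^L\approx M(\log X)^{R}$, so that $M/Q\le(\log X)^{-R}$ up to a factor $p$. The quotient $\mathfrak q(t)=\floor{(Mt+d)/Q}$ maps $I$ onto an interval $J$ of length $H\asymp MU/Q\asymp X/Q$, and $H\ge X^{u-\eps'}\ge X^{\delta_0/2}$. Apply \cref{lem:intervalconcentration} on $J$: apart from an exceptional set of relative size $(\log X)^{-D}+\exp(-c\log X/\log\log X)$, we get $s_p(y)\ge(1/2-\eps/2)(p-1)\log_pH$. \Cref{lem:pullback} transfers this to $t\in I$ at an extra cost $O(M/Q+1/H)=O((\log X)^{-R})$. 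In parallel, run the low-block argument with $L'=\log_p Q$, whose length is about $a\log_pX$. This is legitimate because $p^{L'}\approx M(\log X)^R\le U(\log X)^{-R'}$ when $u>a$, which is exactly where the gap $\delta_0$ is used. It gives $(1/2-\eps/2)(p-1)L'$ from the low digits. Since the two blocks occupy disjoint digit positions, $s_p(m)=s_p(\mathfrak q(t))+s_p(m\bmod Q)$. Adding the two contributions gives $(1/2-\eps)(\log_pH+L')\approx(1/2-\eps)\log_pX$. So we even obtain $1/2$ in place of $\min\{u,1/2\}$; the minimum in the statement is just the uniform safe form. A union bound over the two exceptional sets gives \eqref{eq:twoblockquant}.

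The main obstacle is keeping every error term uniform in $p$ up to $(\log X)^{A_0}$. The block lengths must be chosen so that the losses of order $\log\log X/\log p$ are $o(\log_pX)$ (true since $\log_pX\gg\log X/\log\log X$). The bounded shift $d$, with $|d|\le(\log X)^{A_1}$, must not spoil the bijection on complete blocks; it only translates the residues. We also check that the boundary and truncation terms are $O((\log X)^{-R})$. The final union statement then follows by summing \eqref{eq:twoblockquant} over at most $(\log X)^{A_0}$ primes, given $D>A_0+2$.
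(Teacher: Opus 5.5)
Your treatment of the regime $u>a$ is sound. Splitting at $Q\approx M(\log X)^R$ instead of at the paper's $X^{1/2}(\log X)^{-R}$ works equally well: the multiplicity loss $M/Q\ll(\log X)^{A_0-R}$ is admissible, and the boundary loss $Q/U\le X^{-\delta_0/2}$ is a power saving.

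The gap is in the regime $u<a$, where the required bound is $\min\{u,1/2\}-\eps=u-\eps$. There your only block is the $L=u\log_pX+O(\log\log X/\log p)$ lowest digits. So \cref{lem:wordconcentration} gives only $s_p((Mt+d)\bmod p^L)/(p-1)\ge(1/2-\eps/2)L\approx(u/2)\log_pX$. Your asserted inequality $(u/2-\eps/2)\log_pX\ge(u-\eps)\log_pX$ is false, since it is equivalent to $\eps\ge u$. Nor can the low block be lengthened: complete residue blocks modulo $p^L$ must fit inside $I$, whose length is $X^{u+o(1)}$. So this case delivers only half of what the lemma claims. Your closing remark that ``the real content lies in the other regime'' rests on this miscount.

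What is missing is a second, disjoint block of about $u\log_pX$ digits at the top of $Mt+d$, which is what the paper uses. The construction is as follows.
\begin{itemize}
\item Keep your low modulus $Q_-$, the largest power of $p$ below $U(\log X)^{-R}$.
\item Let $Q_+$ be the smallest power of $p$ above $M(\log X)^R$, and set $\mathfrak q_+(t)=\lfloor (Mt+d)/Q_+\rfloor$.
\item The image of $\mathfrak q_+$ is an interval of length $H_+\asymp MU/Q_+$, lying between $U(\log X)^{-R-A_0}$ and $U(\log X)^{-R}$; in particular $H_+\ge X^{\delta_0/2}$.
\item So \cref{lem:intervalconcentration} followed by \cref{lem:pullback}, with multiplicity error $M/Q_+\le(\log X)^{-R}$, gives $s_p(\mathfrak q_+(t))/(p-1)\ge(1/2-\eps/4)\log_pH_+$ outside an admissible exceptional set.
\item Because $a>u$, one has $Q_+/Q_-\ge X^{a-u}(\log X)^{2R-2A_1}\to\infty$. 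The two digit ranges are therefore disjoint, and discarding the intermediate digits gives $s_p(Mt+d)\ge s_p((Mt+d)\bmod Q_-)+s_p(\mathfrak q_+(t))$.
\item Since $\log_pQ_-+\log_pH_+=2u\log_pX+O(\log\log X/\log p)$, your low-block bound (run with $\eps/4$) plus this high-block bound is at least $(u-\eps)(p-1)\log_pX$, as required.
\end{itemize}
Incidentally, your weaker bound $u/2$ would suffice for the medium-prime budget in the proof of \cref{thm:main}, which only uses $\min\{u_i,1/2\}\ge u_i/2$. It does not, however, prove the lemma as stated.
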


\begin{proof}
All constants below may depend on the fixed data in the statement.
Choose a constant
\[
 R>D+A_0+A_1+10
\]
large enough that every loss of $O(R\log\log X)$ base-$p$ digits is
at most $(\eps/20)\log_pX$ for large $X$.  Whenever a power of $p$ is
chosen at a prescribed scale, choosing the largest power below that
scale or the smallest power above it changes the scale by a factor at
most $p\le(\log X)^{A_0}$.  Thus every logarithmic block length changes
by at most one base-$p$ digit, which is
$O_{A_0}(\log\log X/\log p)$ in the scale notation used below.  The
same bound absorbs the polylogarithmic uncertainty in $M$ and $U$.
The additive shift $d$, including $d=-1$, merely changes the initial
residue and the two endpoints of a quotient interval by $O(1)$.

For reference, the scales used in the two regimes are
\begin{center}
\small
\begin{tabular}{@{}>{\raggedright\arraybackslash}p{0.22\textwidth}
>{\centering\arraybackslash}p{0.31\textwidth}
>{\centering\arraybackslash}p{0.31\textwidth}@{}}
\toprule
Quantity & $a<u$ & $a>u$ \\
\midrule
low modulus & $Q=X^{1/2}(\log X)^{-R+O(A_0)}$
 & $Q_-=U(\log X)^{-R+O(A_0)}$ \\
boundary loss & $Q/U=X^{-\delta_0/2+o(1)}$
 & $Q_-/U=(\log X)^{-R+O(A_0)}$ \\
high modulus & $Q$ & $Q_+=M(\log X)^{R+O(A_0)}$ \\
multiplicity loss & $M/Q=X^{-\delta_0/2+o(1)}$
 & $M/Q_+=(\log X)^{-R+O(A_0)}$ \\
quotient length & $H_Q=X^{1/2}(\log X)^{R+O(A_0)}$
 & $H_+=U(\log X)^{-R+O(A_0)}$ \\
counted length & $\log_pX+O(\log\log X/\log p)$
 & $2u\log_pX+O(\log\log X/\log p)$ \\
\bottomrule
\end{tabular}
\end{center}
In particular, throughout the proof the quotient interval has length
at least $X^{\delta'}$, where one may take
$\delta'=\min\{1/3,\delta_0/2\}>0$.

\smallskip
\noindent\emph{First regime: $a<u$.}
Here $a\le1/2-\delta_0/2$ and $u\ge1/2+\delta_0/2$.  Let $Q$ be the
largest power of $p$ satisfying
\[
 Q\le X^{1/2}(\log X)^{-R}.
\]
Then
\[
 \frac QU+\frac MQ\ll X^{-\delta_0/3}
\]
for large $X$, uniformly in $p$.  Because $M$ is invertible modulo
$Q$, as $t$ runs through any complete block of $Q$ consecutive
integers the residues $Mt+d\pmod Q$ run once through all residues.
The incomplete initial and final blocks contribute $O(Q/U)$ of the
parameters.  Lemma~\ref{lem:wordconcentration}, applied to the
$\log_pQ$ low digits, therefore shows that outside a proportion
\begin{equation}\label{eq:lowerrorfirst}
 O(Q/U)+
 \exp\!\left(-c\frac{\log X}{\log p}\right)
\end{equation}
one has
\begin{equation}\label{eq:lowfirst}
 s_p((Mt+d)\bmod Q)
 \ge\left(\frac12-\frac\eps4\right)(p-1)\log_pQ.
\end{equation}

Write
\[
 Mt+d=Q\,\mathfrak q(t)+r(t),
 \qquad 0\le r(t)<Q.
\]
Since $M<Q$, the nondecreasing map $t\mapsto\mathfrak q(t)$ has a
consecutive image interval $J_Q$ of length
\[
 H_Q\asymp\frac{MU}{Q}
 \asymp X^{1/2}(\log X)^{R+O(A_0)},
\]
and each interior value has $Q/M+O(1)$ preimages.  Thus the relative
variation in the multiplicity is $O(M/Q)$.  Apply
\cref{lem:intervalconcentration} to $J_Q$, with an arbitrarily large
logarithmic exponent, and pull the result back to $I$ using
\cref{lem:pullback}.  Outside a
proportion
\begin{equation}\label{eq:higherrorfirst}
 O(M/Q)+O(1/H_Q)+O((\log X)^{-D-2A_0-4})
 +\exp\!\left(-c\frac{\log X}{\log\log X}\right)
\end{equation}
one has
\begin{equation}\label{eq:highfirst}
 s_p(\mathfrak q(t))
 \ge\left(\frac12-\frac\eps4\right)(p-1)\log_pH_Q.
\end{equation}
Here $H_Q\ge X^{1/3}$ for all sufficiently large $X$, so the
hypothesis of \cref{lem:intervalconcentration} is uniform.  Because
$Q$ is a power of $p$, the identity
$Mt+d=Q\mathfrak q(t)+r(t)$ concatenates the two digit blocks.  Since
$QH_Q\asymp MU\asymp X$, the sum of
\eqref{eq:lowfirst} and \eqref{eq:highfirst}, with the
$O(\log\log X)$ scale losses absorbed, is at least
\[
 \left(\frac12-\eps\right)(p-1)\log_pX.
\]
This is the required bound because $\min\{u,1/2\}=1/2$.

\smallskip
\noindent\emph{Second regime: $a>u$.}
Now $a\ge1/2+\delta_0/2$ and $u\le1/2-\delta_0/2$.  Let $Q_-$ be the
largest power of $p$ below $U(\log X)^{-R}$, and let $Q_+$ be the
smallest power of $p$ above $M(\log X)^R$.  Then
\[
 \frac{Q_-}{U}\ll(\log X)^{-R},
 \qquad
 \frac{M}{Q_+}\ll(\log X)^{-R},
\]
and $Q_-<Q_+$ for large $X$.  Complete residue blocks modulo $Q_-$
give, outside a proportion
\begin{equation}\label{eq:lowerrorsecond}
 O(Q_-/U)+
 \exp\!\left(-c\frac{\log X}{\log p}\right),
\end{equation}
the estimate
\begin{equation}\label{eq:lowsecond}
 s_p((Mt+d)\bmod Q_-)
 \ge\left(\frac12-\frac\eps4\right)(p-1)\log_pQ_-.
\end{equation}

Define
\[
 \mathfrak q_+(t)=\floor{\frac{Mt+d}{Q_+}}.
\]
Its image is a consecutive interval of length
\[
 H_+\asymp\frac{MU}{Q_+}
 \asymp U(\log X)^{-R+O(A_0)},
\]
and $H_+\ge X^{\delta_0/2}$ for all sufficiently large $X$.  Every
interior value has $Q_+/M+O(1)$ preimages, so the relative multiplicity
error is $O(M/Q_+)$.  Quantitative interval concentration, followed by
\cref{lem:pullback}, has total exceptional proportion
\begin{equation}\label{eq:higherrorsecond}
 O(M/Q_+)+O(1/H_+)+O((\log X)^{-D-2A_0-4})
 +\exp\!\left(-c\frac{\log X}{\log\log X}\right).
\end{equation}
Outside that set one has
\begin{equation}\label{eq:highsecond}
 s_p(\mathfrak q_+(t))
 \ge\left(\frac12-\frac\eps4\right)(p-1)\log_pH_+.
\end{equation}
The digits counted in \eqref{eq:lowsecond} lie strictly below the
$Q_-$ place.  The digits of $\mathfrak q_+(t)$ counted in
\eqref{eq:highsecond} occur at and above the $Q_+$ place in $Mt+d$.
Moreover,
\[
 \frac{Q_+}{Q_-}
 \gg \frac{M}{U}(\log X)^{2R-O(A_0)}
 =X^{a-u+o(1)}\to\infty,
\]
so the two blocks are genuinely disjoint with a growing intervening
gap.  Since
\[
 \log_pQ_-+\log_pH_+
 =2u\log_pX+O\!\left(\frac{\log\log X}{\log p}\right),
\]
their combined contribution is at least
\[
 (u-\eps)(p-1)\log_pX.
\]
This is the desired estimate in the second regime.

In both regimes the complete-block boundary errors, the $1/H$ endpoint
terms from \cref{lem:pullback}, nearest-power rounding losses, quotient
multiplicity errors, and concentration tails have been displayed
separately.  The power-saving errors are smaller than any fixed
negative power of $\log X$; the remaining logarithmic errors are made
$O((\log X)^{-D})$ by the choice of $R$.  Each replacement of a
requested scale by a neighboring power of $p$ costs at most one digit,
and all such costs together are
$O_{A_0,A_1,R}(\log\log X/\log p)=o(\log_pX)$ uniformly.
This proves \eqref{eq:twoblockquant}.  Summing over at most
$(\log X)^{A_0}$ primes proves the simultaneous assertion.
\end{proof}

\section{Fixed-prime digit mass on S-unit progressions}\label{sec:sunit}

This section proves \cref{thm:sunitnormal}.  Only one- and two-digit
correlations are needed.  We therefore work directly with the finite
exceptional-subspace alternative supplied by the $p$-adic Subspace
Theorem, rather than with a stronger coefficient-uniform reading of a
later separation lemma.

For a finite place $\ell$, normalize $|\ell|_\ell=\ell^{-1}$, and use
the usual absolute value at $\infty$.

\begin{externallemma}[Subspace-Theorem alternative]\label{lem:subspacealt}
Fix a finite set of places $T$ containing $\infty$, an integer
$d\ge1$, and $\rho>0$.  There exists a finite family $\mathscr W$ of
proper rational subspaces of $\mathbb Q^{d+1}$ such that every
primitive vector $\mathbf x=(x_1,\ldots,x_{d+1})\in\mathbb Z^{d+1}$
outside $\bigcup_{W\in\mathscr W}W$ satisfies
\begin{equation}\label{eq:subspacealt}
 \prod_{v\in T}
 \left|(x_1+\cdots+x_{d+1})x_1\cdots x_{d+1}\right|_v
 >\|\mathbf x\|_\infty^{1-\rho}.
\end{equation}
The family depends only on $T,d,\rho$.
\end{externallemma}

\begin{proof}[Source]
This is the specialization of Drmota--Spiegelhofer
\cite[Lemma~3.3]{DrmotaSpiegelhofer2025}, whose immediate source is
the $p$-adic Subspace Theorem in the form cited there as
Evertse \cite[Chapter~IV, Theorem~1.8]{Evertse1993}.  At every place
we take the $d+1$ coordinate forms and their sum.  These $d+2$ forms
are in general position: any $d+1$ of them are linearly independent,
because either they are all coordinate forms or the sum replaces one
coordinate form.  Here $r=d+2$ and $n=d+1$, so the exponent in the
quoted inequality is $r-n-\rho=1-\rho$.  The exceptional subspaces
depend only on the fixed forms, places, dimension, and $\rho$, not on
the integer coordinates.
\end{proof}

\begin{lemma}[$S$-unit sparse-form alternative]\label{lem:sparsealternative}
Fix a finite set of primes $S$, a prime $p\notin S$, an integer
$d\ge1$, and $\rho>0$.  There is a finite family $\mathscr L$ of
proper rational subspaces of $\mathbb Q^{d+1}$ with the following
property.  Let $A$ be an $S$-unit, let
\[
 m\ge m_1>\cdots>m_d=0,
\]
and let $h_1,\ldots,h_d,H$ be nonzero integers with $p\nmid h_d$.
Put
\[
 x_i=Ap^{m_i}h_i\quad(1\le i\le d),
 \qquad x_{d+1}=-p^mH,
 \qquad F=x_1+\cdots+x_{d+1}.
\]
Then either $\mathbf x\in\bigcup_{W\in\mathscr L}W$, or
\begin{equation}\label{eq:sparsealternative}
 |F|\,|h_1\cdots h_dH|
 \ge \|\mathbf x\|_\infty^{1-\rho}.
\end{equation}
The family and the bound are uniform in $A$, the exponents, and all
integer coefficients.
\end{lemma}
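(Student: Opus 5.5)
We apply \cref{lem:subspacealt} with the set of places $T=S\cup\{p,\infty\}$ and with the given $d$ and $\rho$. We take $\mathscr L=\mathscr W$, or possibly a slight enlargement to handle non-primitive vectors, and then compute the $T$-adic product for $\mathbf x$ explicitly. The only delicate point is primitivity. \cref{lem:subspacealt} applies to primitive vectors, and $\mathbf x$ need not be primitive. We therefore write $\mathbf x=g\mathbf y$ with $g=\gcd(x_1,\ldots,x_{d+1})$ and $\mathbf y$ primitive. Each rational subspace $W$ is homogeneous, so $\mathbf x\in W$ if and only if $\mathbf y\in W$. Outside $\bigcup W$ the lemma therefore gives
\[
 \prod_{v\in T}\bigl|(y_1+\cdots+y_{d+1})y_1\cdots y_{d+1}\bigr|_v>\|\mathbf y\|_\infty^{1-\rho}.
\]

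Next we express the left side in terms of $\mathbf x$. The form $(x_1+\cdots+x_{d+1})x_1\cdots x_{d+1}$ is homogeneous of degree $d+2$. Hence the left side equals $\prod_{v\in T}|g|_v^{-(d+2)}$ times the corresponding product for $\mathbf x$. We also have $\prod_{v\in T}|g|_v\le|g|_\infty$, since the primes of $g$ outside $T$ only shrink the finite part. Thus $\prod_{v\in T}|g|_v^{-(d+2)}\ge |g|^{-(d+2)}$, and this inequality points the wrong way. We need an upper bound on the $\mathbf y$-product in terms of $\mathbf x$, namely
\[
 \prod_{v\in T}|\Phi(\mathbf y)|_v=\Bigl(\prod_{v\in T}|g|_v\Bigr)^{-(d+2)}\prod_{v\in T}|\Phi(\mathbf x)|_v ,
\]
where $\Phi$ denotes the form. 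The key observation is that $g$ is a $T$-unit times a divisor of $h_d$-type coefficients. Indeed $x_d=Ah_d$ and $p\nmid h_d$, so $\nu_p(g)=0$. The primes of $S$ in $g$ are $T$-places and contribute $|g|_\ell\,|g|_\infty$-cancelling factors. The primes of $g$ outside $T$ are bounded by dividing $h_d$. We handle this by bounding instead $\prod_{v\in T}|\Phi(\mathbf x)|_v$ from below by $\|\mathbf x\|^{1-\rho}$, using $\|\mathbf y\|\ge\|\mathbf x\|/|g|$ and absorbing the non-$T$ part $g'$ of $g$ (which satisfies $g'\mid h_d$) into the factor $|h_1\cdots h_dH|$ on the right-hand side of \eqref{eq:sparsealternative}. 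The $T$-part of $g$ cancels exactly, because $\prod_{v\in T}|t|_v=1$ for a $T$-unit $t$.

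It remains to compute $\prod_{v\in T}|\Phi(\mathbf x)|_v$. For each coordinate we have $x_i=Ap^{m_i}h_i$. Here $A$ and $p^{m_i}$ are $T$-units, so by the product formula restricted to $T$ we get
\[
 \prod_{v\in T}|x_i|_v=\prod_{v\in T}|h_i|_v\le|h_i|,
\]
and similarly $\prod_{v\in T}|x_{d+1}|_v\le|H|$. For the sum we use $\prod_{v\in T}|F|_v\le|F|_\infty$, since finite absolute values are at most $1$. Multiplying these bounds gives
\[
 \prod_{v\in T}|\Phi(\mathbf x)|_v\le|F|\,|h_1\cdots h_dH|.
\]
Combining this with the lower bound from the previous paragraph yields \eqref{eq:sparsealternative}. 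If the factor $g'$ costs a power, we replace $\rho$ by $\rho/2$ and use $g'\le|h_d|$, absorbing it into $|h_1\cdots h_dH|$.

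Uniformity is immediate. The family $\mathscr W$ depends only on $T$, $d$ and $\rho$, and hence only on $S$, $p$, $d$ and $\rho$. The main obstacle is the careful bookkeeping for the gcd $g$ in passing from $\mathbf x$ to the primitive vector $\mathbf y$. The hypothesis $p\nmid h_d$, together with $m_d=0$, is exactly what controls the $p$-part of $g$. The $S$-part of $g$ is harmless because it consists of $T$-units, and the remaining part divides $h_d$.
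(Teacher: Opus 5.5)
Your overall strategy is the paper's: apply \cref{lem:subspacealt} with $T=S\cup\{p,\infty\}$ to the primitive vector $\mathbf y=\mathbf x/g$, and use the product formula on the $T$-unit factors $Ap^{m_i}$ and $p^m$. The gcd bookkeeping, however, has a genuine gap, and it sits in the opposite part of $g$ from where you locate it.

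Write $g=g_Tg'$, with $g_T$ the $S$-part and $g'$ prime to $T$. There is no $p$-part, since $g\mid x_d=Ah_d$. Homogeneity gives
\[
 \prod_{v\in T}|\Phi(\mathbf y)|_v=(g')^{-(d+2)}\prod_{v\in T}|\Phi(\mathbf x)|_v,
\]
so $g_T$ does cancel on the left of \eqref{eq:subspacealt}. The right side, however, is $\|\mathbf y\|_\infty^{1-\rho}=(\|\mathbf x\|_\infty/g_Tg')^{1-\rho}$, and there $g_T$ does not cancel. Combined with your bound $\prod_{v\in T}|\Phi(\mathbf x)|_v\le|F|\,|h_1\cdots h_dH|$, your argument actually yields only
\[
 |F|\,|h_1\cdots h_dH|>(g')^{d+1+\rho}\,g_T^{-(1-\rho)}\,\|\mathbf x\|_\infty^{1-\rho}.
\]
So $g'$ is a gain that needs no absorbing, while $g_T$ leaves an uncompensated loss of $g_T^{1-\rho}$. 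This loss is not bounded in terms of the fixed data. For example, take $d=1$, $A=H=\ell^K$ with $\ell\in S$, and $h_1=1$. Then $g_T=\ell^K\asymp\|\mathbf x\|_\infty$ for fixed $m$, and your bound becomes $O(1)$ instead of $\|\mathbf x\|_\infty^{1-\rho}$. Neither halving $\rho$ nor using $g'\le|h_d|$ touches this factor.

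The missing observation is that $g_T$ also divides $F$ and $H$; the latter holds because $g\mid p^mH$ and $p\nmid g$. So the finite places in $S$ give savings that your estimates $\prod_{v\in T}|F|_v\le|F|$ and $\prod_{v\in T}|x_{d+1}|_v\le|H|$ discard. In fact
\[
 \prod_{v\in T}|F|_v\le\frac{|F|}{g_T},
 \qquad
 \prod_{v\in T}|x_{d+1}|_v\le\frac{|H|}{g_T},
\]
and the display then improves to $|F|\,|h_1\cdots h_dH|>g_T^{1+\rho}(g')^{d+1+\rho}\|\mathbf x\|_\infty^{1-\rho}\ge\|\mathbf x\|_\infty^{1-\rho}$.

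The paper does the equivalent computation directly on $\mathbf y$. With $g_A=\gcd(g,A)$ and $g_0=g/g_A$, it writes $y_i=A'p^{m_i}h_i'$ and $y_{d+1}=-p^mH'$, where $h_i'=h_i/g_0$ and $H'=H/g$. It obtains $|F/g|\,|h_1'\cdots h_d'H'|\ge\|\mathbf y\|_\infty^{1-\rho}$, then multiplies by $g^2g_0^d$ to get $g^{1+\rho}g_0^d\|\mathbf x\|_\infty^{1-\rho}$ on the right.
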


\begin{proof}
Apply \cref{lem:subspacealt} with
$T=\{\infty,p\}\cup S$.  We spell out the primitive normalization,
because it is part of the required uniformity.  Let
$g=\gcd(x_1,\ldots,x_{d+1})$.  Since $p\nmid Ah_d$, one has $p\nmid g$.
Set
\[
 g_A=\gcd(g,A),\qquad g_0=g/g_A.
\]
Prime by prime, $g_0$ divides every $h_i$, while the full divisor
$g=g_Ag_0$ divides $H$.  Thus
\[
 A'=A/g_A,\qquad h_i'=h_i/g_0,\qquad H'=H/g
\]
are integers, $A'$ is an $S$-unit, and the primitive vector
$\mathbf y=\mathbf x/g$ has the form
\[
 y_i=A'p^{m_i}h_i',\qquad y_{d+1}=-p^mH'.
\]
Because $p\nmid g_0$ and $p\nmid h_d$, primitive normalization
preserves the crucial condition $p\nmid h_d'$.
If $\mathbf y$ belongs to an exceptional subspace then, by homogeneity,
so does $\mathbf x$.  Otherwise \eqref{eq:subspacealt} applies.

For each coordinate, the factor $A'p^{m_i}$ is a $T$-unit, and hence
the product formula gives
\[
 \prod_{v\in T}|y_i|_v\le |h_i'|,
 \qquad
 \prod_{v\in T}|y_{d+1}|_v\le |H'|.
\]
At every finite place, $|y_1+\cdots+y_{d+1}|_v\le1$.  Therefore
\[
 |F/g|\,|h_1'\cdots h_d'H'|
 \ge \|\mathbf y\|_\infty^{1-\rho}.
\]
Since $h_i=g_0h_i'$, $H=gH'$, and
$\|\mathbf y\|_\infty=\|\mathbf x\|_\infty/g$, multiplication by
$g^2g_0^d$ yields
\[
 |F|\,|h_1\cdots h_dH|
 \ge g^{1+\rho}g_0^d\|\mathbf x\|_\infty^{1-\rho}
 \ge \|\mathbf x\|_\infty^{1-\rho}.
\]
\end{proof}

For $z\in\mathbb R$, let $\|z\|$ denote the distance to the nearest
integer and put $e(z)=e^{2\pi iz}$.

\begin{lemma}[Uniform interior phase separation]\label{lem:phase}
Fix a finite set of primes $S$, a prime $p\notin S$, constants
$C,R>0$ and $0<\eta<1/4$, and $d\in\{1,2\}$.  There exist
$\kappa>0$ and $U_0$ such that the following holds for $U\ge U_0$.
Let $A$ be an $S$-unit with $1\le A\le U^C$, put
$L=\floor{\log_p(AU)}$, choose positions
\[
 \eta L\le j_1<\cdots<j_d\le(1-\eta)L,
\]
and choose nonzero integers $h_i$ with
$p\nmid h_i$ and $|h_i|\le(\log U)^R$.  Then
\[
 \alpha=A\sum_{i=1}^d h_ip^{-j_i-1}
\]
satisfies
\begin{equation}\label{eq:phase}
 \|\alpha\|\ge U^{-1+\kappa}.
\end{equation}
All constants are uniform in $A$, the positions, and the
coefficients.
\end{lemma}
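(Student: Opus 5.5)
The plan is to argue by contradiction and feed the resulting small linear form into \cref{lem:sparsealternative}. Put $m=j_d+1$ and multiply $\alpha$ by $p^{m}$:
\[
 p^{m}\alpha=A\sum_{i=1}^d h_ip^{j_d-j_i}.
\]
Let $H$ be the nearest integer to $\alpha$, so that $\|\alpha\|=|\alpha-H|$. Suppose $\|\alpha\|<U^{-1+\kappa}$. Set $m_i=j_d-j_i$, so that $m_1>\cdots>m_d=0$ and $p\nmid h_d$. Then
\[
 F:=\sum_i Ap^{m_i}h_i-p^{m}H=p^{m}(\alpha-H),
 \qquad |F|<p^{j_d+1}U^{-1+\kappa}.
\]

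First dispose of $H=0$ and of the ordering $m\ge m_1$. Since $m=j_d+1>j_d-j_1=m_1$, the ordering holds. If $H=0$, then $|\alpha|<U^{-1+\kappa}$. But $A|h_1|p^{-j_1-1}\ge AU^{0}p^{-(1-\eta)L-1}\ge (AU)^{\eta}U^{-1}/p$ dominates the remaining terms up to a bounded factor. To see this, note that in the case $d=2$ the second term has size at most $A(\log U)^Rp^{-j_2-1}$, which is smaller by a factor $p^{-(j_2-j_1)}(\log U)^R$. That factor is fine if $j_2-j_1$ is large, but not automatically otherwise. So the case of $H=0$ with a close gap should instead be handled by applying \cref{lem:sparsealternative} to the shorter vector, or by noting that $h_1p^{j_2-j_1}+h_2\neq0$ since $p\nmid h_2$. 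I would therefore treat $H=0$ by the same Subspace input in dimension $d$ with the last coordinate dropped, giving $|\alpha|\ge (AU)^{\eta/2}U^{-1}$ directly from size. The cleanest route is to absorb this case into the exceptional-subspace analysis below.

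Now apply \cref{lem:sparsealternative} to $\mathbf x=(Ap^{m_1}h_1,\ldots,Ap^{m_d}h_d,-p^mH)$, with the fixed $\rho$ to be chosen. Here $\|\mathbf x\|_\infty\asymp Ap^{m}|\alpha|+\ldots$; in any case $\|\mathbf x\|_\infty\ge Ap^{m_1}\ge A$, and $\|\mathbf x\|_\infty\le A(\log U)^Rp^{j_d+1}\cdot O(1)$. Moreover $|H|\ll A(\log U)^R p^{-j_1}$. In the non-exceptional case we get
\[
 p^{j_d+1}U^{-1+\kappa}(\log U)^{dR}|H|>\|\mathbf x\|_\infty^{1-\rho}.
\]
Using $p^{j_d}\le (AU)^{1-\eta}$, $|H|\ll A(AU)^{-\eta}(\log U)^R$, and $\|\mathbf x\|_\infty\ge \max(A,|H|p^m)\gtrsim A|\alpha|p^{j_d}\cdots$, I would compare exponents. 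The cleanest lower bound is $\|\mathbf x\|_\infty\ge |p^mH|$, which is roughly $p^{j_d}A p^{-j_1}$ when $H\ne0$. After taking $U$-logarithms with $A=U^{\theta}$, $\theta\le C$, the left side is about $U^{\kappa-1}(AU)^{1-2\eta}A\cdot(\log)$. This has to be weighed against $(A(AU)^{\ldots})^{1-\rho}$, and I expect the comparison to produce a contradiction by a margin $\gg\eta$ once $\rho,\kappa\ll\eta/(C+1)$. The exponent bookkeeping is the place I am least sure of: the gain must come from the interior condition $\eta L\le j_i\le(1-\eta)L$, which makes both the top coordinate $p^mH$ and the bottom coordinate $Ah_d$ large compared with the tiny $F$.

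The main obstacle is the exceptional subspaces. Each $W\in\mathscr L$ is a fixed rational linear relation $\sum_i w_ix_i=0$ with $\mathbf w\ne0$. Substituting gives $\sum_{i\le d}w_iAp^{m_i}h_i=w_{d+1}p^mH$. For $d=1$ this is $w_1Ah_1=w_2p^{j_1+1}H$. Combined with $|F|$ small and $p\nmid h_1$, it forces $p^{j_1+1}\mid w_1$ up to fixed factors, which is impossible for large $U$ because $j_1\ge\eta L\to\infty$ (or $w_2=0$, forcing $h_1=0$). For $d=2$, eliminating $H$ between the relation and $F\approx0$ expresses $\alpha$ as a fixed rational combination, and I would use a $p$-adic valuation argument: the relation either forces a high power of $p$ to divide a bounded fixed coefficient, using $p\nmid h_i$ and $m_1=j_2-j_1$, or reduces to a one-frequency instance. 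If $m_1$ is bounded, that instance is handled by the $d=1$ case with the coefficient $h_1p^{m_1}+h_2$; if $m_1$ is large, one uses a lower-dimensional application of \cref{lem:sparsealternative}. Since the family $\mathscr L$ is finite and independent of all data, finitely many such cases cover everything, and the final $\kappa$ and $U_0$ are the minimum and maximum over them.
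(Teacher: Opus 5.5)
Your architecture matches the paper's proof. You set $m=j_d+1$, take $H$ as the nearest integer to $\alpha$, put $F=p^m(\alpha-H)$, and treat $H=0$ separately. You then apply \cref{lem:sparsealternative} off the exceptional family and use $p$-adic valuations on the finitely many exceptional hyperplanes, reducing a bounded gap for $d=2$ to $d=1$ via $p^{m_1}h_1+h_2$. As written, however, the argument is not closed in three places, and in two of them you look for the mechanism in the wrong place.

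\emph{The case $H=0$.} You cannot absorb it into the subspace analysis. \cref{lem:sparsealternative} requires $H\ne0$, and with $H=0$ the vector lies on $X_{d+1}=0$, where the hyperplane relation says nothing about $A,h_i,m_i$. No subspace input is needed, because the fact you noticed already finishes it. Write $\alpha=AP/p^m$ with $P=\sum_ih_ip^{m_i}\equiv h_d\not\equiv0\pmod p$. Then $P$ is a nonzero integer, so cancellation between the terms is irrelevant, and
\[
 \|\alpha\|=|\alpha|\ge A/p^m\ge p^{-1}A^\eta U^{-1+\eta},
\]
using $m\le(1-\eta)L+1$ and $p^L\le AU$.

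\emph{The nonexceptional case.} The interior condition plays no role here. Dividing your inequality by $p^m|H|\le\|\mathbf x\|_\infty$ gives
\[
 \|\alpha\|=\frac{|F|}{p^m}\ge\frac{\|\mathbf x\|_\infty^{1-\rho}}{p^m|H|\,|h_1\cdots h_d|}
 \ge\frac{\|\mathbf x\|_\infty^{-\rho}}{(\log U)^{dR}}
 \ge U^{-C_1\rho}(\log U)^{-dR},
\]
where $\|\mathbf x\|_\infty\le U^{C_1}$. Choosing $C_1\rho<1/4$ already yields $\|\alpha\|\ge U^{-1/2}$ for large $U$.

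\emph{The exceptional hyperplanes for $d=2$.} On a fixed exceptional hyperplane
\[
 w_1Ap^{m_1}h_1+w_2Ah_2=w_3p^mH,
\]
the case split that works is on whether $w_2=0$, not on the size of $m_1$. Neither $F\approx0$ nor any lower-dimensional use of \cref{lem:sparsealternative} is needed.

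If $w_2\ne0$, reduce modulo $p^{m_1}$, which is legitimate since $m>m_1$. Using $p\nmid Ah_2$ gives $m_1\le\nu_p(w_2)$. Then $\tilde h=p^{m_1}h_1+h_2$ is prime to $p$ with $|\tilde h|\le(p^{\nu_p(w_2)}+1)(\log U)^R$, and $\alpha=A\tilde h\,p^{-j_2-1}$ is a $d=1$ instance at the central position $j_2$.

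If $w_2=0$, then $w_1w_3\ne0$ because $h_1,H\ne0$. Comparing valuations gives $j_1+1=m-m_1\le\nu_p(w_1)$, which is impossible once $\eta L>\nu_p(w_1)$.

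So a large gap never survives on an exceptional hyperplane, and your unspecified ``large $m_1$'' branch disappears. Note also that the reduction raises the coefficient exponent from $R$ to a $W$-dependent $R_W$. The $d=1$ case must therefore be established first for every exponent, with $\kappa$ and $U_0$ then taken as a minimum and maximum over the finite family.
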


\begin{proof}
Put
\[
 m=j_d+1,\qquad m_i=j_d-j_i,
 \qquad P=\sum_{i=1}^d h_ip^{m_i}.
\]
Then $m_1>\cdots>m_d=0$ and $p\nmid P$, so $P\ne0$.  Let $H$ be a
nearest integer to $AP/p^m$ and set $F=AP-p^mH$.  The integer $F$ is
nonzero, because $p\nmid AP$ and $m\ge1$.

If $H=0$, then
\[
 \|\alpha\|=\frac{A|P|}{p^m}\ge\frac A{p^m}.
\]
Since $m\le(1-\eta)L+1$ and $p^L\le AU$,
\[
 \frac A{p^m}\ge p^{-1}A^\eta U^{-1+\eta}
 \ge p^{-1}U^{-1+\eta}.
\]
Thus \eqref{eq:phase} holds after reducing $\kappa$.

Assume $H\ne0$.  Apply \cref{lem:sparsealternative} with a parameter
$\rho>0$ to
\[
 (Ap^{m_1}h_1,\ldots,Ah_d,-p^mH).
\]
Let $M$ be its sup norm.  Outside the finite exceptional subspaces,
\eqref{eq:sparsealternative} gives
\[
 \frac{|F|}{p^m}
 \ge \frac{M^{1-\rho}}{p^m|H|\,|h_1\cdots h_d|}
 \ge \frac{M^{-\rho}}{|h_1\cdots h_d|},
\]
because $p^m|H|\le M$.  There is a constant $C_1$, depending only on
the fixed data, for which $M\le U^{C_1}$: indeed,
$p^m\ll(AU)^{1-\eta}$, the sparse coefficients are polylogarithmic,
and
\[
 p^m|H|\le A|P|+\tfrac12p^m.
\]
Choose $\rho>0$ so small that $C_1\rho<1/4$.  The coefficient product
is at most $U^{1/4}$ for large $U$, so the last lower bound is at least
$U^{-1/2}$.  Hence \eqref{eq:phase} holds in the nonexceptional case.

It remains to analyze the finitely many exceptional subspaces.  This
is where coefficient-uniformity is proved directly.  For each proper
subspace $W$ in the fixed exceptional family, choose once and for all
a rational hyperplane containing $W$ and clear denominators in its
equation.  Primitive normalization preserves the shape
\[
 (A'p^{m_1}h_1',\ldots,A'h_d',-p^mH').
\]
Here $A'$ is an $S$-unit.  In the present phase lemma every original
$h_i$ is prime to $p$, while the normalizing divisor is prime to $p$;
therefore
\begin{equation}\label{eq:primitivepfree}
 p\nmid h_i'\qquad(1\le i\le d).
\end{equation}

We first complete the case $d=1$.  A containing hyperplane has an
equation $c_1X_1+c_2X_2=0$ with fixed integer coefficients not both
zero.  If $c_1=0$, then $c_2X_2=0$ at a point with $X_2\ne0$, forcing
$c_2=0$, a contradiction; the case $c_2=0$ is identical.  Thus both
coefficients are nonzero, and the relation is
\[
 c_1A'h_1'=c_2p^mH'.
\]
Using \eqref{eq:primitivepfree} and $p\nmid A'$, comparison of
$p$-adic valuations gives
\[
 m=\nu_p(c_1)-\nu_p(c_2)-\nu_p(H')\le\nu_p(c_1).
\]
This is impossible for all $U\ge U_0(W)$, because
$m\ge\eta L$ and $L\ge\log_pU-1$.  Hence no exceptional hyperplane
contains an admissible $d=1$ point once $U$ is large enough.

Now let $d=2$ and write $a=m_1=j_2-j_1\ge1$.  A fixed containing
hyperplane has equation
\begin{equation}\label{eq:exceptionalplane}
 c_1A'p^ah_1'+c_2A'h_2'-c_3p^mH'=0.
\end{equation}
There are exactly two coefficient cases to consider.  When $c_2\ne0$,
either or both of $c_1,c_3$ may vanish; the reduction below does not
use them.  When $c_2=0$, nontriviality of the hyperplane and the fact
that all three coordinates are nonzero force $c_1c_3\ne0$.

If $c_2\ne0$, reduction of \eqref{eq:exceptionalplane} modulo $p^a$
is legitimate because $m>a$ and gives
\[
 a\le\nu_p(c_2),
\]
using \eqref{eq:primitivepfree}.  Thus $a$ is bounded solely in terms
of this fixed hyperplane.  In the original, unnormalized phase group
the sparse numerator is
\[
 \widetilde h=p^ah_1+h_2.
\]
This integer is nonzero and is prime to $p$, since
$\widetilde h\equiv h_2\not\equiv0\pmod p$.  Moreover,
\[
 |\widetilde h|
 \le (p^{\nu_p(c_2)}+1)(\log U)^R
 \le(\log U)^{R_W}
\]
for one fixed exponent $R_W$.  The original phase is therefore the
one-frequency phase $A\widetilde h/p^m$ at the position
$m-1=j_2$, which remains in the original central window.  The already
proved $d=1$ case, with coefficient exponent $R_W$, yields
\eqref{eq:phase} with constants $\kappa_W>0$ and $U_0(W)$.

If $c_2=0$, use the already noted fact $c_1c_3\ne0$.  Using \eqref{eq:primitivepfree}, comparison of $p$-adic valuations in
\eqref{eq:exceptionalplane} gives
\[
 m-a=\nu_p(c_1)-\nu_p(c_3)-\nu_p(H')
 \le \nu_p(c_1)-\nu_p(c_3).
\]
But $m-a=j_1+1\ge\eta L$, which is impossible for
$U\ge U_0(W)$.

Finally, the nonexceptional branch gives exponent $1/2$, while the
$H=0$ branch gives any exponent smaller than $\eta$.  If $d=1$, take
$\kappa=\min\{\eta/2,1/2\}$ and the maximum of the finitely many
exclusion thresholds.  If $d=2$, define explicitly
\[
 \kappa=
 \min\left\{\frac\eta2,\frac12,
       \min_{W:\,c_2(W)\ne0}\kappa_W\right\}>0,
 \qquad
 U_0=\max_{W\in\mathscr L}U_0(W),
\]
with the inner minimum omitted if no reducing hyperplane occurs.
Enlarging $U_0$ to absorb the finitely many preliminary thresholds
proves the uniform statement.
\end{proof}

\begin{proposition}[Separated one- and two-digit patterns]\label{prop:patterns}
Fix the data $S,p,C,c_0,C_0,c_I$ with $p\notin S$, and fix
$0<\eta<1/4$ and $M_*>0$.  Let $A$ be an $S$-unit with
$1\le A\le U^C$, let $|b|\le(\log U)^C$, and let
$I\subset[c_0U,C_0U]$ be an interval of at least $c_IU$ consecutive
integers on which $Au+b\ge0$.  Put $L=\floor{\log_p(AU)}$.

Uniformly for every central position
$\eta L\le j\le(1-\eta)L$ and digit $a\in\{0,\ldots,p-1\}$,
\begin{equation}\label{eq:onepattern}
 \#\{u\in I:\varepsilon_{p,j}(Au+b)=a\}
 =\frac{|I|}{p}+O\!\left(\frac{U}{(\log U)^{M_*}}\right).
\end{equation}
There is $G>0$ such that, uniformly for central positions
$j_1<j_2$ satisfying $j_2-j_1\ge G\log\log U$ and digits $a_1,a_2$,
\begin{equation}\label{eq:twopattern}
 \#\{u\in I:\varepsilon_{p,j_i}(Au+b)=a_i\ (i=1,2)\}
 =\frac{|I|}{p^2}+O\!\left(\frac{U}{(\log U)^{M_*}}\right).
\end{equation}
\end{proposition}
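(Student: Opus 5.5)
The plan is to detect digits by additive characters and reduce both counts to exponential sums controlled by \cref{lem:phase}.

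For the one-digit count, the $j$-th base-$p$ digit of $N$ equals $a$ exactly when the fractional part $\{N/p^{j+1}\}$ lies in $[a/p,(a+1)/p)$. I would approximate the indicator of this interval by a trigonometric polynomial. The natural tool is the Erd\H{o}s--Tur\'an/Vaaler smoothing of degree $K=(\log U)^{M_*+1}$, applied to the phases $\{(Au+b)/p^{j+1}\}$ as $u$ runs over $I$.

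The main term equals $|I|/p$. The smoothing error is $O(|I|/K)$. The Fourier terms are $\sum_{u\in I} e(h(Au+b)/p^{j+1})$ for $1\le|h|\le K$. I will write $h=p^{s}h'$ with $p\nmid h'$. Then the phase becomes $Ah'p^{-(j-s)-1}$. Since $s\le\log_pK=O(\log\log U)$, the shifted position $j-s$ still lies in a slightly smaller central window $[\eta' L,(1-\eta')L]$. The coefficient satisfies $|h'|\le(\log U)^{M_*+1}$.

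\cref{lem:phase} with $d=1$ then gives $\|\alpha\|\ge U^{-1+\kappa}$. The geometric sum over the interval is therefore $\ll\min(|I|,\|\alpha\|^{-1})\le U^{1-\kappa}$. Summing over $h$ costs only $\sum 1/|h|\ll\log K$, which proves \eqref{eq:onepattern}.

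For the two-digit count, I use the product of the two smoothed indicators. This produces frequencies $(h_1,h_2)$ with $|h_i|\le K$. The mixed terms have phase
\[
\alpha=A\left(h_1p^{-j_1-1}+h_2p^{-j_2-1}\right).
\]
Terms with one $h_i=0$ reduce to the one-digit case. Otherwise I extract the $p$-parts, $h_i=p^{s_i}h_i'$, which shifts the positions to $j_i-s_i$. The separation condition $j_2-j_1\ge G\log\log U$, with $G$ chosen larger than $2(M_*+1)/\log p$, guarantees that the shifted positions remain distinct and in the same order. It also keeps them central.

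\cref{lem:phase} with $d=2$ then applies directly, so every mixed sum is $O(U^{1-\kappa})$. The number of frequency pairs is $K^2$, and they enter with weights $\ll 1/(|h_1||h_2|)$. So the total error is $O(U(\log U)^{-M_*})$ after adjusting $K$. The main term is $|I|/p^2$.

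The expected obstacle is the uniformity bookkeeping rather than any single estimate. First, the constants $\kappa$ and $U_0$ from \cref{lem:phase} must be fixed once, with $\eta$ replaced by $\eta/2$ and the exponent $R=M_*+1$, before $U\to\infty$. Second, the shift $b$ and the interval endpoints only affect a unimodular factor and the length $|I|\le C_0U$, and both are harmless. Third, I need to check that $A\le U^C$ and $L=\lfloor\log_p(AU)\rfloor$ match the hypotheses of \cref{lem:phase} exactly.
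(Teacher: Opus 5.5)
Your proposal is correct and follows essentially the same route as the paper. The paper also detects each digit through fractional parts and truncates the Fourier expansion at a polylogarithmic cutoff, citing the Erd\H{o}s--Tur\'an--Koksma inequality in dimension one or two, which is what your Vaaler smoothing amounts to. It then strips the $p$-parts of the frequencies to shift the positions into the window with $\eta/2$, takes $G$ larger than twice the maximal shift so the shifted positions stay distinct, and feeds the phases into \cref{lem:phase} together with the geometric-sum bound.

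One small point to tidy: a plain product of two one-dimensional minorants need not be a minorant of the box indicator. You can either cite Erd\H{o}s--Tur\'an--Koksma directly, as the paper does, or use the standard minorant $B_1^+B_2^-+B_1^-B_2^+-B_1^+B_2^+$, whose zeroth coefficient is still $1/p^2+O(1/K)$.
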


\begin{proof}
The digit condition $\varepsilon_{p,j}(x)=a$ is equivalent to
\[
 \left\{\frac{x}{p^{j+1}}\right\}\in
 \left[\frac ap,\frac{a+1}{p}\right).
\]
Apply the Erd\H{o}s--Tur\'an--Koksma inequality
\cite[Chapter~2, Theorem~2.5]{KuipersNiederreiter1974} to the one- or
two-dimensional point set
\[
 \left(\left\{\frac{Au+b}{p^{j_1+1}}\right\},\ldots,
 \left\{\frac{Au+b}{p^{j_d+1}}\right\}\right),
 \qquad u\in I,
\]
where $d=1$ or $2$.  Use a frequency cutoff
$H_0=(\log U)^{M_0}$.  The unnormalized discrepancy for an
axis-parallel box is
\begin{equation}\label{eq:etk}
 \ll \frac U{H_0}+
 \sum_{0<\|\mathbf h\|_\infty\le H_0}
 \frac1{r(\mathbf h)}
 \left|\sum_{u\in I}
 e\!\left(uA\sum_{i=1}^dh_ip^{-j_i-1}\right)\right|,
\end{equation}
where $r(\mathbf h)=\prod_i\max(1,|h_i|)$; the shift $b$ contributes
only a unimodular factor.

For a nonzero frequency, remove zero coordinates, write each
remaining $h_i=p^{\ell_i}h_i'$ with $p\nmid h_i'$, and shift the
corresponding position from $j_i$ to $j_i-\ell_i$.  Since
$\ell_i=O(\log\log U)$, all shifted positions remain in a central
window with $\eta$ replaced by $\eta/2$.  In the two-dimensional
case, choose $G$ larger than twice the maximal possible shift
constant.  Then the shifted positions remain distinct.  By
\cref{lem:phase},
\[
 \left\|A\sum_i h_ip^{-j_i-1}\right\|\ge U^{-1+\kappa}.
\]
The geometric-sum bound therefore yields
\[
 \left|\sum_{u\in I}e(u\alpha)\right|
 \le \min\{|I|,(2\|\alpha\|)^{-1}\}
 \ll U^{1-\kappa}.
\]
Also
$\sum_{\|\mathbf h\|_\infty\le H_0}r(\mathbf h)^{-1}
\ll(\log H_0)^d$.  Substitution in \eqref{eq:etk} gives
\[
 O\!\left(\frac U{H_0}+U^{1-\kappa}(\log H_0)^2\right).
\]
Choose $M_0>M_*+2$; the second term is smaller than
$U(\log U)^{-M_*}$ for large $U$.  This proves both estimates.
\end{proof}

\begin{lemma}[Uniform digit means and covariances]
\label{lem:digitcovariance}
Under the hypotheses of \cref{prop:patterns}, fix $M>0$.  Uniformly in
all admissible $A,b,I$, every central position $j$ satisfies
\begin{equation}\label{eq:digitmean}
 \E\,\varepsilon_{p,j}(Au+b)
 =\frac{p-1}{2}+O_{p,M}((\log U)^{-M}).
\end{equation}
If $j_1<j_2$ are central and
$j_2-j_1\ge G\log\log U$, where $G$ is chosen as in
\cref{prop:patterns}, then
\begin{equation}\label{eq:digitcovariance}
 \operatorname{Cov}\!\left(
 \varepsilon_{p,j_1}(Au+b),
 \varepsilon_{p,j_2}(Au+b)
 \right)
 =O_{p,M}((\log U)^{-M}).
\end{equation}
Here expectation is with respect to the uniform probability measure on
$I$.
\end{lemma}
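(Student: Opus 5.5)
The plan is to read both estimates directly off \cref{prop:patterns}, applied with the exponent $M_*$ taken larger than $M$, together with the lower bound $|I|\ge c_IU$. The uniform probability measure on $I$ turns a count of size $|I|/p+O(U(\log U)^{-M_*})$ into a probability $1/p+O((\log U)^{-M_*})$. The conversion costs only a factor $U/|I|\le c_I^{-1}$, which is a fixed constant. Since digits take only $p$ values, every moment is a finite linear combination of pattern probabilities with coefficients depending only on $p$.

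For the mean \eqref{eq:digitmean}, write
\[
 \E\,\varepsilon_{p,j}(Au+b)=\sum_{a=0}^{p-1}a\,\Pr[\varepsilon_{p,j}(Au+b)=a].
\]
Inserting \eqref{eq:onepattern} for each $a$ gives
\[
 \sum_{a}\frac ap+O\!\left(p^2c_I^{-1}(\log U)^{-M_*}\right)=\frac{p-1}2+O_p\!\left((\log U)^{-M_*}\right).
\]
For the covariance \eqref{eq:digitcovariance}, first expand the mixed moment:
\[
 \E[\varepsilon_{j_1}\varepsilon_{j_2}]=\sum_{a_1,a_2}a_1a_2\Pr[\varepsilon_{j_1}=a_1,\varepsilon_{j_2}=a_2].
\]
The hypothesis $j_2-j_1\ge G\log\log U$ is exactly the separation required in \eqref{eq:twopattern}. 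Applying it for each pair $(a_1,a_2)$ gives
\[
 \E[\varepsilon_{j_1}\varepsilon_{j_2}]=\left(\frac{p-1}2\right)^2+O_p\!\left((\log U)^{-M_*}\right).
\]
Subtracting the product of the two means from the first step leaves $O_p((\log U)^{-M_*})$. Here the cross terms are bounded because each mean is $O(p)$.

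The only point needing care is uniformity. \cref{prop:patterns} is already stated uniformly in $A$, $b$, $I$, the central positions, and the digits, and we have only finitely many digit choices, at most $p^2$. So the implied constants depend only on $p$, $M$ and the fixed data $S,C,c_0,C_0,c_I,\eta$. The constant $G$ must be the one furnished by \cref{prop:patterns} for the chosen $M_*$. Since $M$ is fixed before $U\to\infty$, fixing $M_*=M$ once and for all makes this consistent. Beyond this bookkeeping there is no real obstacle, since all the analytic difficulty has been absorbed into \cref{lem:phase} and \cref{prop:patterns}.
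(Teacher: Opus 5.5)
Your proposal is correct and takes essentially the same route as the paper. Both apply \cref{prop:patterns} with an exponent at least $M$ (the paper takes $M+2$), divide by $|I|\ge c_IU$ to get one- and two-digit probabilities $1/p+O(\cdot)$ and $1/p^2+O(\cdot)$, sum with weights $a$ and $a_1a_2$, and subtract the product of the means; your remark that $G$ must be the constant attached to the chosen $M_*$ is correct bookkeeping that the paper leaves implicit.
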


\begin{proof}
Apply \eqref{eq:onepattern} with error exponent $M+2$.  Since
$|I|\ge c_IU$, division by $|I|$ gives, uniformly for every digit $a$,
\[
 \mathbb P(\varepsilon_{p,j}=a)
 =\frac1p+O((\log U)^{-M-2}).
\]
Multiplication by $a$ and summation over the fixed set
$0\le a<p$ proves \eqref{eq:digitmean}.  Likewise,
\eqref{eq:twopattern} gives
\[
 \mathbb P(\varepsilon_{p,j_1}=a_1,
            \varepsilon_{p,j_2}=a_2)
 =\frac1{p^2}+O((\log U)^{-M-2})
\]
uniformly in $a_1,a_2$.  Summing with weights $a_1a_2$ and subtracting
the product of the two means proves \eqref{eq:digitcovariance}.
\end{proof}

\begin{proof}[Proof of \cref{thm:sunitnormal}]
It suffices to prove the assertion for one fixed $p\in\mathcal P$;
a finite union gives simultaneous validity.  Put
$L=\floor{\log_p(AU)}$.  Since $1\le A\le U^C$, one has
$L\asymp\log U$ uniformly.  Fix a small $0<\delta<1/8$ and let
\[
 \mathcal J=\{j\in\mathbb Z:\delta L\le j\le(1-\delta)L\}.
\]
Set
\[
 S_{\mathcal J}(u)=\sum_{j\in\mathcal J}
 \varepsilon_{p,j}(Au+b).
\]
In this proof, $\E$ and $\Var$ refer to the uniform probability space
on $I$.  Apply \cref{lem:digitcovariance} with a large fixed
exponent $M_*$.  Summing
\eqref{eq:digitmean} over $j\in\mathcal J$ gives
\begin{equation}\label{eq:sunitmean}
 \E S_{\mathcal J}
 =\frac{p-1}{2}|\mathcal J|
 +O_p\!\left(L(\log U)^{-M_*}\right).
\end{equation}
For the variance, the diagonal terms contribute $O_p(L)$.  The number
of ordered off-diagonal pairs satisfying
\[
 |j_1-j_2|<G\log\log U
\]
is $O(L\log\log U)$, and each such covariance is $O_p(1)$.  Every
remaining pair satisfies \eqref{eq:digitcovariance}.  Consequently,
\begin{equation}\label{eq:sunitvariance}
 \Var(S_{\mathcal J})
 \ll_p L\log\log U+
 L^2(\log U)^{-M_*}=o(L^2)
\end{equation}
uniformly in all admissible $A,b,I$.

To make the uniform exceptional function explicit, let
\[
 \omega(U)=\sup_{A,b,I}
 \left(
 \frac{\Var(S_{\mathcal J})}{L^2}
 +\frac{1}{(\log U)^{M_*}}
 \right)^{1/4},
\]
where the supremum is over the displayed admissible data.  Enlarge
$\omega$ monotonically if necessary; then $\omega(U)\to0$ by
\eqref{eq:sunitvariance}.  Chebyshev's inequality and
\eqref{eq:sunitmean} show that, outside at most
$O(\omega(U)^2U)$ values of $u$,
\[
 S_{\mathcal J}(u)
 =\frac{p-1}{2}|\mathcal J|+O(\omega(U)L)+o(L).
\]
Because $u\asymp U$ and $|b|$ is polylogarithmic, one has
$Au+b\asymp AU$ uniformly; hence the full expansion has $L+O(1)$
possible digit positions.  Write its omitted contribution as
$R_{\mathcal J}(u)$.  Then
\[
 0\le R_{\mathcal J}(u)
 \le 2\delta(p-1)L+O_p(1),
 \qquad
 |\mathcal J|=(1-2\delta)L+O(1).
\]
Consequently the preceding central estimate gives, uniformly outside
$o(U)$ values,
\[
 -\delta(p-1)L+o(L)
 \le s_p(Au+b)-\frac{p-1}{2}L
 \le \delta(p-1)L+o(L).
\]
First choose $\delta$ sufficiently small in terms of $p$ and $\eps$,
and then let $U\to\infty$.  This gives
\[
 \left|s_p(Au+b)-\frac{p-1}{2}L\right|
 \le \frac\eps2(p-1)L
\]
for all but $o(U)$ values.  Since
$L=\log_p(AU)+O(1)$, the bounded change is absorbed by the remaining
$\eps$ margin.  A finite union over $p\in\mathcal P$ completes the
proof.  More precisely, the union has size at most $r(U)U$, where
$r(U)=O_{\mathcal P}(\omega(U)^2)\to0$ and depends only on the fixed
data in the theorem.  This is the asserted uniform meaning of
``all but $o(U)$''.
\end{proof}

We retain the exact same-base identity used in the small-prime
budgets.

\begin{lemma}[Forced suffix]\label{lem:forcedsuffix}
For every prime $p$, integer $J\ge0$, and integer $t\ge1$,
\begin{equation}\label{eq:forcedsuffix}
 s_p(p^Jt-1)=(p-1)J+s_p(t-1).
\end{equation}
For the asymptotic consequence, fix a prime $p$ and $\delta,\eps>0$.  Suppose that
\[
 0\le e\le1-\delta,
 \qquad
 p^J=X^{e+O(\log\log X/\log X)},
 \qquad
 p^JU\asymp X,
\]
and that $t$ ranges through an interval of length $\asymp U$ contained
in a fixed proportional interval at scale $U$.  Then, uniformly in the
displayed data, all but $o(U)$ such $t$ satisfy
\begin{equation}\label{eq:forcedsuffixconsequence}
 s_p(p^Jt-1)
 \ge\left(\frac{1+e}{2}-\eps\right)(p-1)\log_pX.
\end{equation}
\end{lemma}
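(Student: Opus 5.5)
The identity \eqref{eq:forcedsuffix} is exact and comes from looking at the base-$p$ expansion. Since $t\ge1$, write $p^Jt-1=p^J(t-1)+(p^J-1)$. The second summand satisfies $0\le p^J-1<p^J$, and its base-$p$ expansion is $J$ digits all equal to $p-1$. The first summand is a multiple of $p^J$, so its digits sit entirely at positions $\ge J$ and coincide with the digits of $t-1$ shifted up by $J$ places. The two blocks occupy disjoint positions, so no carries occur, and digit sums add. This gives $s_p(p^Jt-1)=(p-1)J+s_p(t-1)$. The case $J=0$ is trivial.

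For the asymptotic consequence, we bound the two terms separately. First, $p^J=X^{e+O(\log\log X/\log X)}$ gives
\[
 J=e\log_pX+O_p(\log\log X),
\]
so $(p-1)J\ge(e-\eps/4)(p-1)\log_pX$ for large $X$. This holds uniformly, since $p$ is fixed and the error is explicit.

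Second, we bound $s_p(t-1)$. As $t$ runs through an interval of length $\asymp U$, the values $t-1$ run through an interval $J'$ of $H\asymp U$ consecutive integers. From $p^JU\asymp X$ we get $U=X^{1-e+o(1)}\ge X^{\delta/2}$ for large $X$, uniformly in $0\le e\le1-\delta$. Apply \cref{lem:intervalconcentration} with this $\delta/2$, the fixed prime $p$ (certainly $p\le\log X$), and $\eps/4$ in place of $\eps$. Outside an exceptional set of proportion $o(1)$, uniform in the interval, this gives
\[
 s_p(t-1)\ge\left(\tfrac12-\tfrac\eps4\right)(p-1)\log_pH.
\]
Since $\log_pH=\log_pU+O(1)=(1-e)\log_pX+O(\log\log X)$, the right side is at least $\bigl(\tfrac{1-e}{2}-\tfrac\eps2\bigr)(p-1)\log_pX$ for large $X$, because $1-e\le1$ bounds the loss. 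Alternatively, \cref{thm:sunitnormal} with $A=1$ and $b=-1$ could be used here, but the elementary lemma already suffices.

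Adding the two bounds gives
\[
 (p-1)\log_pX\left(e+\tfrac{1-e}{2}-\tfrac{3\eps}{4}\right)=\left(\tfrac{1+e}{2}-\tfrac{3\eps}{4}\right)(p-1)\log_pX,
\]
which implies \eqref{eq:forcedsuffixconsequence}.

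The only delicate point is uniformity. The exceptional proportion from \cref{lem:intervalconcentration} depends only on $X$ and the fixed data, and it requires the lower bound $H\ge X^{\delta/2}$. That bound is exactly where the hypothesis $e\le1-\delta$ is used. One must also check that the implied constants in $p^JU\asymp X$ and in $H\asymp U$ enter only through $O(1)$ changes in the logarithms, which they do. No other step poses an obstacle.
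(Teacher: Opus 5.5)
Your proposal is correct and follows essentially the same route as the paper. You use the disjoint-block decomposition $p^Jt-1=p^J(t-1)+(p^J-1)$, then apply \cref{lem:intervalconcentration} to the values $t-1$, using $U\ge X^{\delta/2}$ from $e\le1-\delta$, and absorb the $O(\log\log X)$ scale losses into the $\eps$ margin. Your explicit passage from $\log_pH$ to $\log_pU$ (with $H\asymp U$ the interval length) is a small bookkeeping step that the paper leaves implicit.
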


\begin{proof}
The identity follows from
\[
 p^Jt-1=p^J(t-1)+(p^J-1),
\]
whose summands occupy disjoint digit blocks.  The scale assumptions give
\[
 \log U=(1-e)\log X+O(\log\log X)
 \ge \frac\delta2\log X
\]
for all sufficiently large $X$.  Thus $U\ge X^{\delta/2}$, and
\cref{lem:intervalconcentration} applies to the interval of values
$t-1$.  Outside $o(U)$ values it gives
\[
 s_p(t-1)\ge
 \left(\frac12-\frac\eps3\right)(p-1)\log_pU.
\]
Also
\[
 J=e\log_pX+O\!\left(\frac{\log\log X}{\log p}\right),
 \qquad
 \log_pU=(1-e)\log_pX+O\!\left(\frac{\log\log X}{\log p}\right).
\]
Substitution in \eqref{eq:forcedsuffix}, followed by absorption of the
$O(\log\log X/\log p)$ loss into the fixed $\eps\log_pX$ margin,
proves \eqref{eq:forcedsuffixconsequence}.
\end{proof}

\section{Abundant exact representations and the large-prime sieve}

We first make the representation count uniform, including the
geometric conditions needed for the factorial arguments.

\begin{lemma}[Uniform abundant representations]
\label{lem:representations}
Fix an integer $r\ge2$, numbers $0<\alpha<\beta$, and exponents
$0<e_i<1$.  Let
\[
 M_i=X^{e_i+O(\log\log X/\log X)},
 \qquad U_i=\frac{X}{M_i},
\]
where the implicit constants are fixed, and suppose that there are
distinct indices $i_0,i_1$ for which
\[
 \gcd(M_{i_0},M_{i_1})=1,
 \qquad M_{i_0}M_{i_1}=o(X).
\]
There is a finite cover of $[\alpha X,\beta X]$ by target intervals
$\mathcal T_\ell$ of length $\asymp X$ with the following property.
For each $\ell$ there are fixed intervals $I_{i,\ell}$ of consecutive
positive integers, independent of the individual target
$m\in\mathcal T_\ell$, such that
\[
 c_iU_i\le |I_{i,\ell}|\le C_iU_i
\]
for positive constants $c_i,C_i$, and constants
$0<c_R<C_R<\infty$ for which every $m\in\mathcal T_\ell$ has between
\begin{equation}\label{eq:representationtwosided}
 c_R\frac{\prod_{i=1}^rU_i}{X}
 \quad\text{and}\quad
 C_R\frac{\prod_{i=1}^rU_i}{X}
\end{equation}
representations
\begin{equation}\label{eq:representationeq}
 M_1t_1+\cdots+M_rt_r=m,
 \qquad t_i\in I_{i,\ell}.
\end{equation}
Moreover, the intervals can be chosen so that there are fixed
constants $0<\gamma_0<\Gamma_0<\infty$ for which
\begin{equation}\label{eq:representationbox}
 \gamma_0X\le M_it\le\Gamma_0X
 \qquad(t\in I_{i,\ell},\ 1\le i\le r).
\end{equation}
Thus the first comparison holds on the entire Cartesian parameter box,
not merely on represented tuples.  In addition, every representation
counted above satisfies
\begin{equation}\label{eq:representationgeometry}
 M_it_i\le(1-\gamma_0)m
 \qquad(1\le i\le r),
\end{equation}
after decreasing $\gamma_0$ once if necessary.
\end{lemma}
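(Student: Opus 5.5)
We construct the boxes directly and count lattice points on the hyperplane by fibering over the two coprime coordinates. Fix a small constant \(\theta>0\) and, for each \(i\), let \(I_i\) be the integers \(t\) with \(M_it\in[\theta\sigma_iX,\,2\theta\sigma_iX]\). The weights \(\sigma_i>0\) are fixed so that \(\sum_i\sigma_i\) places the sum range \(\sum_iM_it_i\in[\theta X\sum\sigma_i,\,2\theta X\sum\sigma_i]\) around the targets. Then \(|I_i|\asymp U_i\) and \eqref{eq:representationbox} holds with \(\gamma_0=\theta\min\sigma_i\) and \(\Gamma_0=2\theta\max\sigma_i\).

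To handle a general target range \([\alpha X,\beta X]\), we cover it by finitely many target intervals \(\mathcal T_\ell\), each of length comparable to \(X\). For each \(\ell\) we choose the box centres \(\sigma_{i,\ell}\) so that \(\mathcal T_\ell\) sits strictly inside the middle of the possible sum range; for example, the sums should range over \([x_\ell-\theta' X,\,x_\ell+\theta' X]\) with \(\mathcal T_\ell\) of half that width. We also make every \(\sigma_{i,\ell}\le \tfrac12\alpha/r\). Since every \(m\ge\alpha X\), this gives \(M_it_i\le (1-\gamma_0)m\), which is \eqref{eq:representationgeometry}.

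For the count, we fix all \(t_i\) with \(i\ne i_0,i_1\). Their number is \(\asymp\prod_{i\ne i_0,i_1}U_i\). The residual \(m'=m-\sum_{i\ne i_0,i_1}M_it_i\) must then be written as \(M_{i_0}t_{i_0}+M_{i_1}t_{i_1}\) with \(t_{i_0}\in I_{i_0}\) and \(t_{i_1}\in I_{i_1}\). Because \(\gcd(M_{i_0},M_{i_1})=1\), the solutions form a single arithmetic progression: \(t_{i_0}\equiv m'M_{i_0}^{-1}\pmod{M_{i_1}}\), with \(t_{i_1}\) then determined. Along the line \(M_{i_0}t_{i_0}+M_{i_1}t_{i_1}=m'\), the constraints \(t_{i_0}\in I_{i_0}\) and \(t_{i_1}\in I_{i_1}\) cut out an interval for \(t_{i_0}\) of length \(\ell(m')\), up to \(O(1)\). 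A progression of step \(M_{i_1}\) then contains \(\ell(m')/M_{i_1}+O(1)\) solutions. Here \(\ell(m')\) is proportional to \(U_{i_0}\) times a piecewise-linear factor in \(m'/X\), namely the length of the overlap of the two boxes' projections.

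The main obstacle is ensuring that this main term dominates the \(O(1)\) error and that the fibre count is bounded below by a constant times \(U_{i_0}/M_{i_1}\), that is, \(\asymp U_{i_0}U_{i_1}/X\). The hypothesis \(M_{i_0}M_{i_1}=o(X)\) gives exactly \(U_{i_0}/M_{i_1}=X/(M_{i_0}M_{i_1})\to\infty\), which controls the error. For the lower bound, we restrict to those outer tuples whose residual \(m'\) lies in the middle half of the feasible range for the \((i_0,i_1)\) pair. The overlap length is then \(\ge cU_{i_0}\), because \(\mathcal T_\ell\) was placed centrally. When the box widths are chosen proportionally, a positive proportion of outer tuples qualify, uniformly in \(m\in\mathcal T_\ell\).

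Summing the fibre counts gives both bounds in \eqref{eq:representationtwosided}: \(\asymp \prod_{i\ne i_0,i_1}U_i\cdot U_{i_0}U_{i_1}/X\). The upper bound uses \(\ell(m')\ll U_{i_0}\) for every outer tuple. All constants depend only on \(r,\alpha,\beta\), the \(e_i\), and \(\theta\), and the intervals \(I_{i,\ell}\) do not depend on the individual target \(m\).
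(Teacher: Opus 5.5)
Your outline has the same architecture as the paper: fixed boxes at scale $U_i$, fibering over the coprime pair, the lattice-line count $\asymp X/(M_{i_0}M_{i_1})\to\infty$, unrestricted outer tuples for the upper bound, and a finite cover. However, two steps fail or are missing as written.

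\textbf{The geometry step.} Your proof of \eqref{eq:representationgeometry} rests on an impossible constraint. On a representation the parts $M_it_i$ sum to $m$, so some part is at least $m/r$. But $\sigma_{i,\ell}\le\alpha/(2r)$ together with small $\theta$ makes every part at most $2\theta\sigma_{i,\ell}X\le\theta\alpha X/r<m/r$. Equivalently, the whole sum range lies below $\alpha X$ and no target is represented at all. Enlarging $\theta$ to reach targets near $\beta X$ does not rescue it: that needs $\theta\gg\beta/\alpha$, and then the factor $\theta/r$ is no longer below $1$ when $\beta/\alpha$ is large. The bound should come from central placement and balanced weights instead. With $\sigma_{i,\ell}=\Sigma_\ell/r$ and $m\ge\frac54\theta\Sigma_\ell X$ (the middle half of the sum range), one gets $M_it_i\le 2\theta\Sigma_\ell X/r\le\frac{8}{5r}m\le\frac45 m$. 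The paper encodes the same fact in $\theta_i+3\rho<\tau-3\rho$.

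\textbf{The lower bound.} This rests on your assertion that a positive proportion of outer tuples leave the residual $m'$ in the middle of the pair's range, uniformly in $m\in\mathcal T_\ell$. That assertion is the real content of the lower bound, and it is only stated. With $\mathcal T_\ell$ as wide as half the full sum range it is also not automatic. For equal weights and $r\ge4$, as $m$ runs over $\mathcal T_\ell$, the residual left by any fixed small outer sub-box moves across an interval of length $\frac12\theta\Sigma_\ell X\ge\theta(\sigma_{i_0}+\sigma_{i_1})X$. That is the full length of the pair's range, so the good outer tuples must depend on $m$. You would need an explicit argument, e.g.\ split $m$ into points of the middle halves of the individual ranges (middle halves add under Minkowski sum) and take $\epsilon X$-neighbourhoods in the outer coordinates.

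The paper avoids this entirely. Its target interval has half-width $\rho X/4$, far smaller than the box half-width $\rho X$. It restricts the outer coordinates to concentric intervals with $\rho$ replaced by $\rho/(10r)$. Then every restricted outer tuple leaves $m'$ at distance at least $\rho X$ from both ends of $M_1I_1+M_2I_2$, for every target in the interval. Shrinking your $\mathcal T_\ell$ in the same way costs nothing, since only finitely many intervals are needed to cover $[\alpha X,\beta X]$, and it closes the gap.
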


\begin{proof}
Relabel the coordinates, if necessary, so that the distinguished
coprime pair is $(M_1,M_2)$.  All conclusions of the lemma are
symmetric in the coordinates.  Fix a center $\tau\in(\alpha,\beta)$.
Choose positive numbers
$\theta_1,\ldots,\theta_r$ with sum $\tau$; for example,
$\theta_i=\tau/r$.  Choose $\rho>0$ so small that
\[
 0<\theta_i-3\rho<\theta_i+3\rho<\tau-3\rho
 \qquad(1\le i\le r).
\]
Let
\[
 I_i=\{t\in\Z:\ (\theta_i-\rho)U_i\le t
 \le(\theta_i+\rho)U_i\}.
\]
These are fixed for the entire target interval
\[
 \mathcal T_\tau=[(\tau-\rho/4)X,(\tau+\rho/4)X].
\]
For large $X$, all their elements are positive and $|I_i|\asymp U_i$.
For every $t\in I_i$,
\[
 (\theta_i-\rho)X-O(M_i)\le M_it
 \le(\theta_i+\rho)X+O(M_i).
\]
Since $e_i<1$, one has $M_i=o(X)$.  Hence, after enlarging $X_0$, there
are fixed constants $0<\gamma_0<\Gamma_0<\infty$ such that
\eqref{eq:representationbox} holds for every point of every parameter
interval.  Moreover, for $m\in\mathcal T_\tau$ the inequalities
$\theta_i+3\rho<\tau-3\rho$ give a fixed positive proportional gap
between $M_it$ and $m$.  Decreasing $\gamma_0$ if necessary yields
\eqref{eq:representationgeometry} for every counted representation.

Restrict $t_3,\ldots,t_r$ to the concentric intervals obtained by
replacing $\rho$ by $\rho/(10r)$.  There are
\begin{equation}\label{eq:innerchoices}
 \asymp\prod_{i=3}^rU_i
\end{equation}
such choices, with the empty product interpreted as $1$.  For every
$m\in\mathcal T_\tau$, the residual
\[
 m'=m-\sum_{i=3}^rM_it_i
\]
lies in a fixed inner subinterval of the real interval
$M_1I_1+M_2I_2$.  Indeed, uniformly in the restricted choices,
\[
 \left|m'-(\theta_1+\theta_2)X\right|
 \le\left(\frac14+\frac{r-2}{10r}\right)\rho X+o(X)
 \le\frac{7\rho}{20}X+o(X),
\]
whereas the two-variable sum interval has center
$(\theta_1+\theta_2)X+o(X)$ and half-length $2\rho X+o(X)$.
Thus, for all large $X$, the residual is at distance at least
$\rho X$ from each endpoint.  This is precisely why the same $I_i$
work for every target in $\mathcal T_\tau$.

Since $\gcd(M_1,M_2)=1$, the integer solutions of
$M_1t_1+M_2t_2=m'$ form a lattice line
\[
 t_1=t_1^{(0)}+M_2z,
 \qquad
 t_2=t_2^{(0)}-M_1z,
 \qquad z\in\Z.
\]
Write $I_i=[A_i,B_i]\cap\mathbb Z$.  The two restrictions on $z$ are
\[
 \frac{A_1-t_1^{(0)}}{M_2}\le z\le
 \frac{B_1-t_1^{(0)}}{M_2},
 \qquad
 \frac{t_2^{(0)}-B_2}{M_1}\le z\le
 \frac{t_2^{(0)}-A_2}{M_1}.
\]
Each interval has length $\asymp X/(M_1M_2)$.  More explicitly,
after translating both $z$-intervals by $t_1^{(0)}/M_2$ and multiplying
by $M_1M_2$, their intersection becomes
\[
 [M_1A_1,M_1B_1]\cap[m'-M_2B_2,m'-M_2A_2].
\]
The preceding inner-position choice gives one fixed $\xi>0$ such that
\begin{align*}
 m'-(M_1A_1+M_2A_2)&\ge\xi X,\\
 (M_1B_1+M_2B_2)-m'&\ge\xi X,
\end{align*}
and the two weighted interval lengths are each at least $\xi X$ and
at most $\xi^{-1}X$.  For any two intervals, the length of their
intersection is at least the minimum of their two lengths and the two
endpoint margins above.  Hence the original $z$-interval intersection
has length between
\[
 c_z\frac{X}{M_1M_2}
 \quad\text{and}\quad
 C_z\frac{X}{M_1M_2}
\]
for fixed $0<c_z<C_z<\infty$.  Since $M_1M_2=o(X)$, the number of
integer $z$ is bounded above and below by fixed positive multiples of
$X/(M_1M_2)$.  Multiplication by
\eqref{eq:innerchoices} yields
\[
 \asymp \frac{X}{M_1M_2}\prod_{i=3}^rU_i
 =\frac1X\prod_{i=1}^rU_i,
\]
with uniform lower and upper constants.

For the global upper bound, do not restrict the last $r-2$
coordinates.  For each of their $O(\prod_{i=3}^rU_i)$ choices, the
same lattice-line calculation gives $O(X/(M_1M_2))$ solutions.  This
proves the upper half of \eqref{eq:representationtwosided}.  Finally,
a finite set of centers $\tau$ covers the compact interval
$[\alpha,\beta]$, proving the lemma.
\end{proof}

\begin{corollary}[Representation averaging]
\label{cor:representationaveraging}
Put $V=\prod_iU_i$ and $R=V/X$.  In one target interval supplied by
\cref{lem:representations}, if $o(V)$ representation tuples are
bad, then only $o(X)$ targets can have every one of their
representations bad.
\end{corollary}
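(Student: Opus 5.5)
The plan is a double-counting argument based on the lower half of \eqref{eq:representationtwosided}. Fix the target interval $\mathcal T_\ell$ and the box $\mathcal B=\prod_i I_{i,\ell}$ supplied by \cref{lem:representations}. Each tuple $(t_1,\ldots,t_r)\in\mathcal B$ represents exactly one target, namely $m=\sum_i M_it_i$. The map from tuples to targets is therefore a function, and the sets of representations of distinct targets are pairwise disjoint subsets of $\mathcal B$. By \eqref{eq:representationbox}, $|\mathcal B|\asymp\prod_iU_i=V$, so it makes sense to speak of $o(V)$ bad tuples inside $\mathcal B$.

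Let $\mathcal E\subset\mathcal T_\ell$ be the set of targets all of whose representations in $\mathcal B$ are bad. By \cref{lem:representations}, every $m\in\mathcal T_\ell$ has at least $c_RV/X=c_RR$ representations in $\mathcal B$. For $m\in\mathcal E$ all of these are bad. Summing over $m\in\mathcal E$ and using disjointness gives
\[
 |\mathcal E|\,c_R\frac VX\le\#\{\text{bad tuples}\}=o(V),
\]
and hence $|\mathcal E|=o(X)/c_R=o(X)$. The constant $c_R$ is fixed once the target interval is fixed, so no uniformity is lost. Since there are finitely many target intervals, the conclusion also transfers to all of $[\alpha X,\beta X]$.

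The only point requiring care is the meaning of $o(V)$. It must be a single function $r(X)V$ with $r(X)\to0$, following the convention of \cref{sec:uniformity}. Then $|\mathcal E|\le c_R^{-1}r(X)X$, which is an explicit $o(X)$ bound of the same kind. I expect no real obstacle. The substantive input, namely the uniform lower bound on the number of representations, is already supplied by \cref{lem:representations}; the one fact to verify is that representations of distinct targets do not overlap, which holds trivially.
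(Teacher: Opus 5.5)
Your double-counting argument is correct and is exactly the paper's proof: summing the lower bound $c_RR$ on representations per target over the targets whose representations are all bad, and using that distinct targets have disjoint representation sets, gives $c_R|\mathcal E|V/X\le o(V)$, hence $|\mathcal E|=o(X)$. One small citation slip: the box size $|\mathcal B|\asymp V$ follows from the length bounds $c_iU_i\le|I_{i,\ell}|\le C_iU_i$, not from \eqref{eq:representationbox}.
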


\begin{proof}
If $T$ targets had all representations bad, the lower bound in
\eqref{eq:representationtwosided} would give at least $c_RTR$ bad
tuples.  Since this is $o(V)$ and $R=V/X$, one has $T=o(X)$.
\end{proof}

We now handle all primes above a polylogarithmic threshold.  The carry
propagation is written column by column.

\begin{lemma}[Large-prime Kummer sieve]\label{lem:largeprime}
Fix $r\ge2$ and $\delta>0$.  Suppose every $M_i$ has all prime factors
in one fixed finite set, each parameter interval has length
$U_i\ge X^\delta$, and
\[
 a_i=M_it_i-1\asymp X.
\]
Let $v=O(\log X)$ and set
\[
 n=\sum_{i=1}^ra_i-v,
 \qquad N=n+v=\sum_{i=1}^ra_i.
\]
For any fixed $B\ge4$, put $Y=(\log X)^B$.  Then the proportion of
parameter tuples for which some prime $p>Y$ violates
\begin{equation}\label{eq:largeprimegoal}
 \nu_p((n+1)\cdots(n+v))
 \le \nu_p\binom{N}{a_1,\ldots,a_r}
\end{equation}
is $o(1)$.
\end{lemma}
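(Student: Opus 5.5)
The plan is to reduce \eqref{eq:largeprimegoal} to a single residue condition on the parameters modulo $p$, and then estimate how often that condition holds by counting residues.

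\emph{Step 1: only one multiple of $p$ matters.} Let $p>Y=(\log X)^B$. Since $v=O(\log X)<p$ for large $X$, at most one of $n+1,\ldots,n+v$ is divisible by $p$. If none is, the left side of \eqref{eq:largeprimegoal} vanishes and there is nothing to prove. Otherwise write the multiple as $n+j=N-w$ with $0\le w<v$, and put $e=\nu_p(N-w)\ge1$. Then the left side of \eqref{eq:largeprimegoal} equals $e$. Moreover $N\equiv w\pmod{p^e}$ with $0\le w<p$, so in base $p$ the digit of $N$ in column $0$ is $w$ and its digits in columns $1,\ldots,e-1$ are $0$.

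\emph{Step 2: carries propagate.} I will use the column identity from the proof of \eqref{eq:carryidentity}, $c_j+\sum_i a_{i,j}=N_j+pc_{j+1}$. Suppose $c_1\ge1$. For each $1\le j\le e-1$ we have $N_j=0$, so if $c_j\ge1$ the left side is positive, which forces $c_{j+1}\ge1$. By induction $c_1,\ldots,c_e\ge1$. Kummer's theorem (\eqref{eq:kummerdigit}) then gives $\nu_p\binom{N}{a_1,\ldots,a_r}\ge e$, which is \eqref{eq:largeprimegoal}.

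\emph{Step 3: the residual bad event.} A violation therefore requires $c_1=0$. This means $\sum_i (a_i\bmod p)=w\le v$, and hence $a_i\bmod p\le v$ for every $i$. In particular, for the two distinguished coordinates, each $M_it_i-1$ lies within $v$ above a multiple of $p$. Note that $p\nmid M_i$, because $p>Y$ exceeds every prime in the fixed finite set.

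\emph{Step 4: small and medium primes.} Take $Y<p\le X^{\delta/2}$. The map $t\mapsto M_it-1\pmod p$ is a bijection on each complete block of $p$ consecutive $t$. So the proportion of $t_i\in I_i$ with $a_i\bmod p\le v$ is at most $(v+1)/p+p/U_i$. Using two coordinates and independence of the box coordinates, the bad proportion for this $p$ is
\[
\ll \frac{v^2}{p^2}+\frac{v}{X^{\delta/2}}+X^{-\delta}.
\]
Summing over $p\le X^{\delta/2}$ gives
\[
\ll \frac{v^2}{Y}+v\,X^{-\delta/2}\pi(X^{\delta/2})\cdot X^{0}+\cdots,
\]
where the first term is $(\log X)^{2-B}\to0$ since $B\ge4$. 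The cross term needs care: I will use the cruder bound $(v+1)/p+1/\lfloor U_i/p\rfloor$, or split $p$ at $X^{\delta/4}$, so that the power saving beats the $\pi$-count.

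\emph{Step 5: large primes, the main obstacle.} This is the range $p>X^{\delta/2}$, up to $p\le N\ll X$. Here residues of a single coordinate modulo $p$ need not equidistribute on intervals of length $U_i$. I would first try to use the full congruence $N\equiv w\pmod p$, which couples all coordinates. Fix $t_2,\ldots,t_r$ and $w$; then $t_1$ lies in a single residue class modulo $p$, which gives at most $U_1/p+1$ choices. Combining this with the condition $a_2\bmod p\le v$, counted as at most $(X/p+1)(v/M_2+1)$ values of $t_2$, should give a bad proportion of order $v^2(X/p+1)/(pU_2)$ up to harmless factors. Summing over large $p$ then needs a power saving, and that is exactly where the hypotheses $U_i\ge X^\delta$ and $M_i\asymp X/U_i$ must be used.

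If this direct count is too lossy, the fallback is to choose the pair of coordinates with the smallest $M_i$. For $p>X^{1-\delta/2}$ there are only $O(1)$ multiples of $p$ near $a_i\asymp X$, so the condition $a_i\bmod p\le v$ confines $t_i$ to $O(1+v/M_i)$ values, a proportion $O((v+M_i)/U_i\cdot M_i/X)$. This is $o(1/\pi)$-summable once the coupling congruence supplies an extra factor of roughly $1/p$. Making these large-prime sums converge uniformly over the parameter boxes is the step I expect to require the most work.
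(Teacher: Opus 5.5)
\textbf{There is a genuine gap, in Step~5.}

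Your Steps 1--3 are exactly the paper's reduction: there is a unique $j$, carries are forced through the zero digits of $N$, and so failure requires $c_1=0$ and $\sum_i(a_i\bmod p)=v-j\le v-1$. Step~4 is repairable. The cross term is $\frac{v+1}{p}\cdot\frac{p}{U_i}=\frac{v+1}{U_i}\le(v+1)X^{-\delta}$. Your displayed per-prime term $v/X^{\delta/2}$, summed over $\pi(X^{\delta/2})$ primes, gives only $O(v/\log X)=O(1)$.

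Step~5 is where the argument fails. A count made one prime at a time has a boundary term in each coordinate: your $U_1/p+1$, or $(v+1)(|I_i|/p+1)$ from complete residue blocks. Summing these over the $\asymp X/\log X$ primes below $\max_i a_i$ costs about $v^2\pi(CX)/(|I_1||I_2|)$, and the hypothesis $|I_i|\ge X^\delta$ does not make this small.

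Your own estimate shows the problem. It sums to $\sum_{p>X^{\delta/2}}v^2X/(p^2U_2)\asymp v^2X^{1-\delta/2}/(U_2\log X)$. This is not $o(1)$ unless $U_2\gg X^{1-\delta/2}\log X$, i.e.\ unless $M_2$ is below about $X^{\delta/2}$.

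The coupling congruence does not rescue it. It gives $t_1$ proportion $1/p+1/|I_1|$, which is about $1/|I_1|$ rather than $1/p$ once $p>|I_1|$. The fallback does not close the gap either. For $X^{1-\delta/2}<p<X$ there are up to $X^{\delta/2}$ multiples of $p$ in the range of $a_i$, not $O(1)$. The range $X^{\delta/2}<p\le X^{1-\delta/2}$ is not addressed at all.

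\textbf{The missing idea is to reverse the order of the union bound.} Fix $t_1$ and the residue vector $(h_1,\ldots,h_r)$ first; there are $\binom{v+r-1}{r}=O_r(v^r)$ vectors with $\sum_ih_i\le v-1$. Since $h_1\le v$ and $a_1\asymp X$, every offending prime $p>Y$ divides the fixed positive integer $a_1-h_1\ll X$. So there are at most $\log(CX)/\log Y$ candidate primes, not $\pi(X)$. For each candidate and each $i\ge2$, the congruence $M_it_i-1\equiv h_i\pmod p$ has at most $|I_i|/p+1$ solutions, since $p\nmid M_i$. The bad proportion is therefore
\[
 \ll_r v^r\,\frac{\log X}{\log Y}\prod_{i=2}^r\Bigl(\frac1Y+\frac1{U_i}\Bigr)=o(1)
\]
for $B\ge4$ and $U_i\ge X^\delta$. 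This handles all $p>Y$ at once, so neither the split at $X^{\delta/2}$ nor any equidistribution modulo large primes is needed.

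Your prime-by-prime bound $(v+1)^2(1/p+1/|I_i|)(1/p+1/|I_j|)$ would in fact sum to $o(1)$ over all $p$ if two coordinates had $|I_i||I_j|\ge X^{1+\epsilon}$. In the proof of \cref{thm:main} this holds for the distinguished pair $(M_1,M_{r+1})$, because $|I_i|\asymp X/M_i$ and $x_1+y_1<1$. However, it is not a hypothesis of this lemma.
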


\begin{proof}
For large $X$, $Y>v$ and $Y$ exceeds every prime dividing any $M_i$.
If a prime $p>Y$ divides the rising product, there is a unique
$1\le j\le v$ for which $p\mid n+j$.  Put
$e=\nu_p(n+j)$.  The demand on the left side of
\eqref{eq:largeprimegoal} is exactly $e$.  Since
\[
 N=(n+j)+(v-j),\qquad 0\le v-j<p,
\]
the first $e$ target digits of $N$ in base $p$ are
\[
 \boxed{\ v-j\ \big|\ 0\ \big|\ 0\ \big|\ \cdots\ \big|\ 0\ },
\]
with $e-1$ zeros after the units digit.

Let $c_\ell$ be the carry entering column $\ell$ in the addition of
$a_1,\ldots,a_r$, so $c_0=0$.  In the first $e$ columns the carry
recursion is
\begin{align*}
 \sum_{i=1}^r a_{i,0}&=v-j+pc_1,\\
 c_\ell+\sum_{i=1}^r a_{i,\ell}&=pc_{\ell+1}
 \qquad(1\le\ell\le e-1).
\end{align*}
If $c_1>0$, then the second line recursively forces
$c_2,\ldots,c_e\ge1$.  Hence the total Kummer carry mass is at least
$c_1+\cdots+c_e\ge e$, and \eqref{eq:largeprimegoal} holds.  Failure
therefore requires $c_1=0$.

Let $h_i$ be the least nonnegative residue of $a_i$ modulo $p$.  The
condition $c_1=0$ gives
\begin{equation}\label{eq:residuevector}
 h_1+\cdots+h_r=v-j.
\end{equation}
The number of possible residue vectors as $j$ varies is bounded
explicitly by
\begin{equation}\label{eq:compositioncount}
 \sum_{s=0}^{v-1}\binom{s+r-1}{r-1}
 =\binom{v+r-1}{r}=O_r(v^r).
\end{equation}

Fix $t_1$ and one vector $(h_1,\ldots,h_r)$.  By
\eqref{eq:residuevector}, every $h_i\le v$, whereas $a_i\asymp X$.
Thus, for large $X$, the integer $a_1-h_1$ is positive and
$\asymp X$ (in particular, the case $p>a_1$ cannot occur under
\eqref{eq:residuevector}).  Every candidate prime $p>Y$ divides it, so
there are at most
\[
 \frac{\log(CX)}{\log Y}=O\!\left(\frac{\log X}{\log Y}\right)
\]
such primes.  For a candidate $p$ and each $i\ge2$, the congruence
\[
 M_it_i-1\equiv h_i\pmod p
\]
has at most $U_i/p+1$ solutions in the interval for $t_i$, because
$p\nmid M_i$.  Dividing by the full parameter-box size and using
\eqref{eq:compositioncount}, the bad proportion is
\begin{equation}\label{eq:largeprimebound}
 \ll_r v^r\frac{\log X}{\log Y}
 \prod_{i=2}^r\left(\frac1Y+\frac1{U_i}\right).
\end{equation}
For $r=2$, which is the worst case, the part containing $1/Y$ is
\[
 O\!\left(
 \frac{(\log X)^3}{Y\log\log X}
 \right)
 =O\!\left(\frac{(\log X)^{3-B}}{\log\log X}\right)=o(1)
\]
when $B\ge4$, and the $U_2^{-1}$ term is
$O((\log X)^3/(X^\delta\log\log X))=o(1)$.  For completeness, expand the product for arbitrary fixed $r$:
\[
 \prod_{i=2}^r\left(\frac1Y+\frac1{U_i}\right)
 =\sum_{S\subseteq\{2,\ldots,r\}}
 Y^{-(r-1-|S|)}\prod_{i\in S}U_i^{-1}.
\]
The empty-set term in \eqref{eq:largeprimebound} is
\[
 O_r\!\left(\frac{(\log X)^{r+1-B(r-1)}}{\log\log X}\right)=o(1)
\]
for $B\ge4$, because $r+1-4(r-1)=5-3r<0$ for $r\ge2$.  Every term
with $|S|\ge1$ contains the factor $X^{-\delta|S|}$ and is therefore
$o(1)$ regardless of the remaining fixed power of $\log X$.  This
proves the result for every fixed $B\ge4$.
\end{proof}

\section{Resource allocation}

Put $h=k-1$.  The next elementary lemma converts two continuous logarithmic resources into exactly $k$ pure-power parts while retaining a coprime pair whose product is $o(X)$.

\begin{lemma}[Pure-power allocation]\label{lem:allocation}
Let $k\ge2$, and suppose that $\lambda_2,\lambda_3>0$ and
\[
 \lambda_2+\lambda_3<k-1.
\]
Then there are integers $r,s\ge1$ with $r+s=k$, and exponents
\[
 0<x_1,\ldots,x_r<1,
 \qquad
 0<y_1,\ldots,y_s<1,
\]
such that
\[
 \sum_{i=1}^rx_i=\lambda_2,
 \qquad
 \sum_{j=1}^sy_j=\lambda_3,
\]
and, after relabeling,
\[
 x_1+y_1<1.
\]
If $F\subset(0,1)$ is finite and $\lambda_2,\lambda_3\notin F$, the exponents may also be chosen outside $F$.
\end{lemma}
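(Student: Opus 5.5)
The plan is to split each resource evenly among a suitable number of parts, pick $r$ and $s$ so that every part lies strictly in $(0,1)$, and then perturb slightly to secure $x_1+y_1<1$ and to avoid $F$.

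\emph{Step 1: choosing $r$ and $s$.} Put $r_0=\lfloor\lambda_2\rfloor+1$ and $s_0=\lfloor\lambda_3\rfloor+1$. These are the least integers with $\lambda_2/r_0<1$ and $\lambda_3/s_0<1$. Since $r_0\le\lambda_2+1$ and $s_0\le\lambda_3+1$, we have $r_0+s_0<\lambda_2+\lambda_3+2<k+1$, so $r_0+s_0\le k$. Now set $s=s_0$ and $r=k-s\ge r_0\ge1$. Any larger $r$ still keeps $\lambda_2/r<1$.

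\emph{Step 2: the base choice.} Take the equal splits $x_i=\lambda_2/r$ and $y_j=\lambda_3/s$. All of these lie in $(0,1)$ and have the required sums.

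\emph{Step 3: the key inequality.} We need $\lambda_2/r+\lambda_3/s<1$, and this is the only real point of the proof. Suppose instead that $\lambda_2/r+\lambda_3/s\ge1$. Each term is less than $1$, and we have $\lambda_2<r$ and $\lambda_3<s$ with $\lambda_2+\lambda_3<k-1=r+s-1$. If the equal splits fail, we can pass to unequal splits instead. Take $x_1$ as small as we like, say $x_1=\eta$, and distribute $\lambda_2-\eta$ over the remaining $r-1$ parts; similarly set $y_1=\eta$ and distribute $\lambda_3-\eta$ over the remaining $s-1$ parts. This works provided $\lambda_2-\eta<r-1$ when $r\ge2$, and $\lambda_3-\eta<s-1$ when $s\ge2$, which needs $\lambda_3<s$ (true for small $\eta$). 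The trouble case is $r=1$ or $s=1$. If $s=1$, then $y_1=\lambda_3$ is forced and $x_1=\eta$ is small. Then $x_1+y_1=\eta+\lambda_3<1$ for small $\eta$ because $\lambda_3<1$, and the remaining $r-1=k-2$ parts must carry $\lambda_2-\eta$, which requires $\lambda_2<k-2+\eta$. This holds since $\lambda_2<k-1-\lambda_3$, and we choose $\eta<\lambda_3$ so that $\lambda_2-\eta<k-1-\lambda_3\le\dots$. This last condition, $\lambda_2-\eta<k-2$, amounts to $\lambda_2+\lambda_3<k-1$ whenever $\eta$ is close to $\lambda_3$. So instead take $x_1=\lambda_3'$ slightly less than $1-\lambda_3$. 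Then $x_1+y_1<1$, and the rest of $\lambda_2$, namely $\lambda_2-x_1$, is less than $k-2$ provided $\lambda_2-(1-\lambda_3)<k-2$, which is exactly the hypothesis. The case $r=1$ is symmetric.

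In general, therefore, choose $x_1+y_1=1-\epsilon$. The remaining $k-2$ parts must absorb $\lambda_2+\lambda_3-1+\epsilon<k-2$, split as $\lambda_2-x_1$ over $r-1$ parts and $\lambda_3-y_1$ over $s-1$ parts. This requires $0\le\lambda_2-x_1<r-1$ and $0\le\lambda_3-y_1<s-1$, with the right-hand value equal to $0$ exactly when $r=1$ or $s=1$ respectively. Feasibility reduces to a small linear system in $(x_1,y_1)$. One takes $x_1\in(\max(0,\lambda_2-r+1),\min(1,\lambda_2)]$ and similarly for $y_1$, and the lower endpoints sum to less than $1$ by the hypothesis $\lambda_2+\lambda_3<r+s-1$. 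Checking the various endpoint cases is the main obstacle, and it needs a little care when $r=1$ or $s=1$.

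\emph{Step 4: avoiding $F$.} All constraints are strict open inequalities, so the feasible set of exponent vectors is a relatively open subset of the affine space determined by the two sum constraints. When $r\ge2$ we can perturb freely, and a generic point avoids the finite set $F$ in every coordinate. When $r=1$, the single part is $x_1=\lambda_2$, which lies outside $F$ by hypothesis; likewise $y_1=\lambda_3$ when $s=1$.
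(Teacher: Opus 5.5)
Your final argument is correct and is essentially the paper's proof: taking $s=s_0$, $r=k-s_0$ (the paper likewise puts the extra slots in the binary group) and choosing $x_1,y_1$ just above $\max(0,\lambda_2-(r-1))$ and $\max(0,\lambda_3-(s-1))$ merges the paper's two cases, namely a tiny binary exponent when $r_0+s_0<k$, and leftovers $\lambda_2-(r-1)$, $\lambda_3-(s-1)$ summing to $\lambda_2+\lambda_3-(k-2)<1$ when $r_0+s_0=k$; your genericity argument for avoiding $F$ is also the paper's. In a write-up, drop the equal-split attempt and the demand $x_1+y_1=1-\epsilon$ (impossible for small $\epsilon$ when, e.g., $\lambda_2=\lambda_3=0.1$) and ask only for $x_1+y_1<1$; when $r=1$ (resp.\ $s=1$), use the forced value $x_1=\lambda_2$ (resp.\ $y_1=\lambda_3$), since your half-open interval for $x_1$ is empty in that case.
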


\begin{proof}
Start with
\[
 r_0=\floor{\lambda_2}+1,
 \qquad
 s_0=\floor{\lambda_3}+1.
\]
Since $\lambda_2+\lambda_3<k-1$, one has $r_0+s_0\le k$.  Choose $r\ge r_0$ and $s\ge s_0$ with $r+s=k$.

If $r_0+s_0<k$, assign at least one extra slot to, say, the binary group.  Then $r-1>\lambda_2$.  One binary exponent can be made arbitrarily small while the remaining $r-1$ positive exponents, all below $1$, sum to the remainder.  Since every ternary exponent is below $1$, the small binary exponent and the least ternary exponent have sum below $1$.

If $r_0+s_0=k$, choose $r-1$ binary exponents and $s-1$ ternary exponents sufficiently close to $1$ from below.  The two remaining exponents then approach
\[
 \lambda_2-(r-1)
 \qquad\text{and}\qquad
 \lambda_3-(s-1),
\]
whose sum is
\[
 \lambda_2+\lambda_3-(k-2)<1.
\]
A sufficiently small perturbation makes every exponent positive and below $1$ and preserves the strict inequality.

In either construction, every group containing at least two exponents has a nonempty relatively open family of admissible splittings of its prescribed sum.  The conditions that one of those exponents belong to the finite set $F$ form a finite union of proper affine subspaces, so the splitting may be chosen outside them.  A group consisting of a single exponent is fixed at $\lambda_2$ or $\lambda_3$, which lies outside $F$ by hypothesis.
\end{proof}

\begin{lemma}[Resource inequalities]\label{lem:resource}
Fix
\[
 c<\frac{3h}{\log 12}.
\]
There exist a number $c_0$ with
\[
 c<c_0<\frac{3h}{\log 12}
\]
and positive numbers $\lambda_2,\lambda_3$ such that
\begin{equation}\label{eq:resource2}
 \frac{h+\lambda_2}{2\log2}>c_0,
\end{equation}
\begin{equation}\label{eq:resource3}
 \frac{h+\lambda_3}{\log3}>c_0,
\end{equation}
and
\begin{equation}\label{eq:resourcesum}
 \lambda_2+\lambda_3<h.
\end{equation}
Moreover, $\lambda_2$ and $\lambda_3$ may be required to avoid any prescribed finite subset of $(0,\infty)$.
\end{lemma}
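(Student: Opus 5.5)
The plan is an explicit choice at the formal endpoint, followed by a perturbation.

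Let $c_*=3h/\log12$ and first pick any $c_0$ with $c<c_0<c_*$. At the endpoint value $c_*$, define
\[
 \lambda_2^*=2c_*\log2-h,\qquad \lambda_3^*=c_*\log3-h.
\]
These satisfy the two budget inequalities with equality, and
\[
 \lambda_2^*+\lambda_3^*=c_*(2\log2+\log3)-2h=c_*\log12-2h=h.
\]
Both are positive: $\lambda_2^*=h\bigl(6\log2/\log12-1\bigr)>0$ because $64>12$, and $\lambda_3^*=h\bigl(3\log3/\log12-1\bigr)>0$ because $27>12$.

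Now lower both resources slightly. Put $\lambda_2=\lambda_2^*-\eta$ and $\lambda_3=\lambda_3^*-\eta$ for a small $\eta>0$. Then \eqref{eq:resourcesum} becomes $h-2\eta<h$, which holds. The left side of \eqref{eq:resource2} equals $c_*-\eta/(2\log2)$, and the left side of \eqref{eq:resource3} equals $c_*-\eta/\log3$. Both exceed $c_0$ once
\[
 \eta<(c_*-c_0)\min\{2\log2,\log3\}.
\]
Positivity of $\lambda_2,\lambda_3$ survives when $\eta<\min\{\lambda_2^*,\lambda_3^*\}$.

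For the avoidance clause, note that all four conditions are strict inequalities, so they cut out a nonempty open set of admissible $\eta$. Removing the finitely many $\eta$ for which $\lambda_2$ or $\lambda_3$ lands in the prescribed finite set still leaves admissible $\eta$, which proves the lemma.

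The only step that needs real checking is the positivity of the endpoint resources. This reduces to the numerical facts $2^6>12$ and $3^3>12$. Everything else is linear algebra plus openness.
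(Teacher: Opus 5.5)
Your proof is correct and is essentially the paper's argument: the paper sets $\lambda_2=2c_0\log2-h+\rho$ and $\lambda_3=c_0\log3-h+\rho$, with $c_0$ close to the endpoint and $\rho$ small. You instead anchor at the endpoint $c_*$ and subtract $\eta$, which has the minor advantage that positivity (via $2^6>12$ and $3^3>12$) holds for any $c_0\in(c,c_*)$ rather than requiring $c_0$ near $c_*$; the avoidance clause is handled the same way in both, by openness of the admissible parameter interval.
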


\begin{proof}
Choose $c_0$ sufficiently close to $3h/\log12$, while keeping $c_0>c$, and then choose a small $\rho>0$.  Put
\[
 \lambda_2=2c_0\log2-h+\rho,
 \qquad
 \lambda_3=c_0\log3-h+\rho.
\]
At the endpoint $3h/\log12$, both limiting values are positive, so $\lambda_2$ and $\lambda_3$ are positive when $c_0$ is sufficiently close to the endpoint.  The inequalities \eqref{eq:resource2} and \eqref{eq:resource3} are immediate.  Finally,
\[
 \lambda_2+\lambda_3=c_0\log12-2h+2\rho<h
\]
when $\rho$ is sufficiently small, because $c_0\log12<3h$.  The admissible values of $\rho$ contain an interval; deleting the finitely many values that would place $\lambda_2$ or $\lambda_3$ in a prescribed finite set proves the final assertion.
\end{proof}

\section{Proof of the density-one lower bound}

\begin{proof}[Proof of \cref{thm:main}]
Put $h=k-1$.  The assertion is trivial for $c\le0$, so fix
\[
 0<c<\frac{3h}{\log12}.
\]
We first choose every fixed parameter, in an order that will remain in
force throughout the proof.

\smallskip
\noindent\emph{Parameter hierarchy.}
Use \cref{lem:resource} to choose
\[
 c<c_1<c_0<\frac{3h}{\log12}
\]
and $\lambda_2,\lambda_3>0$ such that
\[
 \frac{h+\lambda_2}{2\log2}>c_0,
 \qquad
 \frac{h+\lambda_3}{\log3}>c_0,
 \qquad
 \lambda_2+\lambda_3<h,
\]
with $\lambda_2,\lambda_3\ne1/2$.  Define the positive margins
\begin{equation}\label{eq:margins}
 \delta_2=\frac{h+\lambda_2}{2\log2}-c_1,
 \qquad
 \delta_3=\frac{h+\lambda_3}{\log3}-c_1,
 \qquad
 \delta_5=\frac{2h}{\log5}-c_1,
\end{equation}
and the representation surplus
\begin{equation}\label{eq:resourcegap}
 \Delta=h-\lambda_2-\lambda_3>0.
\end{equation}
The last margin in \eqref{eq:margins} is positive because
$3h/\log12<2h/\log5$.

Apply \cref{lem:allocation} with $F=\{1/2\}$.  We obtain $r,s\ge1$,
$r+s=k$, and exponents
\[
 x_1,\ldots,x_r,\quad y_1,\ldots,y_s\in(0,1)\setminus\{1/2\}
\]
such that
\[
 \sum_{i=1}^rx_i=\lambda_2,
 \qquad
 \sum_{j=1}^sy_j=\lambda_3,
 \qquad
 x_1+y_1<1.
\]
Let $e_1,\ldots,e_k$ denote these exponents in one list and put
\begin{equation}\label{eq:exponentgap}
 \delta_{\rm exp}=\frac14\min_{1\le i\le k}
 \{e_i,1-e_i,|1-2e_i|\}>0.
\end{equation}
Choose a finite-prime error $\rho>0$ so small that
\begin{equation}\label{eq:rhochoice}
 (k+1)\rho<\frac14\min\{\delta_2,\delta_3,\delta_5\}.
\end{equation}
Choose the medium-prime digit error $\eta>0$ so that
\begin{equation}\label{eq:etachoice}
 \eta<\frac{\Delta}{16(k+1)}.
\end{equation}
Next choose a fixed prime cutoff $P\ge5$ so large that
\begin{equation}\label{eq:Pchoice}
 c_1\frac{\log p}{p-1}<\frac{\Delta}{3}
 \qquad(p>P).
\end{equation}
Finally choose an integer $B\ge4$, put $D=B+4$, and later take $X$
large enough for every estimate made with these fixed parameters.
This order of choices eliminates any diagonal limiting convention.

\smallskip
\noindent\emph{The exact representations.}
Work on $X\le n<2X$ and set
\[
 v=\floor{c_1\log X}.
\]
For $1\le i\le r$ and $1\le j\le s$, define
\[
 J_i=\floor{\frac{x_i\log X}{\log2}},
 \qquad M_i=2^{J_i},
\]
\[
 K_j=\floor{\frac{y_j\log X}{\log3}},
 \qquad M_{r+j}=3^{K_j}.
\]
Thus $M_i\asymp X^{e_i}$, with absolute multiplicative constants, and
\[
 \gcd(M_1,M_{r+1})=1,
 \qquad
 M_1M_{r+1}\ll X^{x_1+y_1}=o(X).
\]
Put $U_i=X/M_i$.  The target
\[
 m=n+v+k
\]
lies in $[X/2,3X]$ for all sufficiently large $X$.  The pair
$(M_1,M_{r+1})$ supplies the distinguished coprime pair required by
\cref{lem:representations}.  Apply that lemma to this fixed
proportional target interval.  After a finite subdivision, each target subinterval has
fixed parameter intervals $I_i$ of length $\asymp U_i$ such that every
$m$ in it has between fixed positive multiples of
\begin{equation}\label{eq:Rcount}
 R=\frac1X\prod_{i=1}^kU_i
 \asymp X^{k-1-(\lambda_2+\lambda_3)}=X^\Delta
\end{equation}
representations
\begin{equation}\label{eq:targetrepresentation}
 \sum_{i=1}^kM_it_i=n+v+k,
 \qquad t_i\in I_i.
\end{equation}
For every point of the full Cartesian parameter box, define
\[
 a_i=M_it_i-1.
\]
The strengthened full-box conclusion \eqref{eq:representationbox}
gives fixed constants $0<\gamma_0<\Gamma_0$ such that
\begin{equation}\label{eq:fullboxparts}
 \frac{\gamma_0}{2}X\le a_i\le2\Gamma_0X
 \qquad(t_i\in I_i,\ 1\le i\le k)
\end{equation}
for all sufficiently large $X$.  This is the comparison used below
when digit estimates and the large-prime sieve are applied to the
entire parameter box.

For a tuple satisfying \eqref{eq:targetrepresentation}, one has
\[
 a_1+\cdots+a_k=n+v.
\]
The represented-tuple conclusion \eqref{eq:representationgeometry},
together with $m=n+O(\log X)$, gives a fixed $\gamma>0$ for which
\begin{equation}\label{eq:partsinterior}
 \gamma X\le a_i\le(1-\gamma)n
 \qquad(1\le i\le k)
\end{equation}
for all large $X$.  In particular every $t_i$ and every represented
factorial argument $a_i$ is positive.

The exponent ranges required by \cref{thm:sunitnormal} are also
explicit.  For a binary exponent $x_i$,
\[
 J_i=\frac{x_i}{\log2}\log X+O(1),
 \qquad
 \log U_i=(1-x_i)\log X+O(1),
\]
so, for large $X$,
\begin{equation}\label{eq:Jbound}
 J_i\le C_i\log U_i,
 \qquad C_i=\frac{2x_i}{(1-x_i)\log2}.
\end{equation}
Likewise, for a ternary exponent $y_j$,
\begin{equation}\label{eq:Kbound}
 K_j\le C'_j\log U_{r+j},
 \qquad C'_j=\frac{2y_j}{(1-y_j)\log3}.
\end{equation}
Thus every $S$-unit normal-order invocation below lies in the proved
uniform range.

Let
\[
 V=\prod_{i=1}^k|I_i|\asymp\prod_{i=1}^kU_i.
\]
We shall remove only $o(V)$ representation tuples.  A coordinatewise
exceptional set of size $o(U_i)$ removes $o(V)$ tuples after the other
coordinates are restored, and a finite union preserves this property.

Because the exponents $e_i$ are fixed in $(0,1)$, there is one constant
$C_*$ such that
\[
 M_i\le U_i^{C_*}\qquad(1\le i\le k)
\]
for all large $X$.  Every interval $I_i$ is contained in a fixed
proportional interval at scale $U_i$, and $M_it_i-1>0$.  Hence all
applications of \cref{thm:sunitnormal} below are uniform in the
required multipliers, shifts, and intervals.  In addition,
\eqref{eq:exponentgap} gives $1-e_i\ge4\delta_{\rm exp}$ for every
resource exponent.  Since each same-base multiplier satisfies
$p^J=X^{e_i+O(1/\log X)}$, one has $p^JU_i=X$ and the interval $I_i$
is proportional at scale $U_i$.  Thus every use of
\cref{lem:forcedsuffix} below satisfies its missing-growth hypothesis,
for example with $\delta=2\delta_{\rm exp}$.

\smallskip
\noindent\emph{The prime $2$.}
For a binary part, \cref{lem:forcedsuffix}, invoked with error small
enough that the natural-log coefficient error is at most $\rho$, gives
\begin{equation}\label{eq:binarypart2}
 s_2(2^{J_i}t_i-1)
 \ge\left(\frac{1+x_i}{2\log2}-\rho\right)\log X
\end{equation}
outside $o(U_i)$ values.  For a ternary part,
\cref{thm:sunitnormal}, with $S=\{3\}$, $\mathcal P=\{2\}$,
$A=3^{K_j}$, $b=-1$, and \eqref{eq:Kbound}, gives
\begin{equation}\label{eq:ternarypart2}
 s_2(3^{K_j}t_{r+j}-1)
 \ge\left(\frac1{2\log2}-\rho\right)\log X
\end{equation}
outside $o(U_{r+j})$ values.  By \cref{lem:targetupper}, all but
$o(X)$ targets satisfy
\begin{equation}\label{eq:target2}
 s_2(n)\le\left(\frac1{2\log2}+\rho\right)\log X.
\end{equation}
For such a target and a tuple surviving the coordinate exceptions,
\begin{align}
 \sum_{i=1}^ks_2(a_i)-s_2(n)
 &\ge\left(\frac{h+\lambda_2}{2\log2}-(k+1)\rho\right)\log X\notag\\
 &>(c_1+\tfrac34\delta_2)\log X>v.            \label{eq:prime2budget}
\end{align}

\smallskip
\noindent\emph{The prime $3$.}
The same argument, using \cref{thm:sunitnormal} with the roles of
$2$ and $3$ interchanged, gives
for ternary parts
\[
 s_3(3^{K_j}t_{r+j}-1)
 \ge\left(\frac{1+y_j}{\log3}-\rho\right)\log X,
\]
for binary parts
\[
 s_3(2^{J_i}t_i-1)
 \ge\left(\frac1{\log3}-\rho\right)\log X,
\]
and for normal targets
\[
 s_3(n)\le\left(\frac1{\log3}+\rho\right)\log X.
\]
Consequently,
\begin{equation}\label{eq:prime3budget}
 \sum_{i=1}^ks_3(a_i)-s_3(n)
 \ge\left(\frac{h+\lambda_3}{\log3}-(k+1)\rho\right)\log X
 >(c_1+\tfrac34\delta_3)\log X>v.
\end{equation}

\smallskip
\noindent\emph{The fixed primes $5\le p\le P$.}
For this finite set, every multiplier $M_i$ is a power of a prime
different from $p$.  Use \cref{thm:sunitnormal} with $S=\{2,3\}$ and
$\mathcal P=\{p:5\le p\le P\}$, choosing its error so that the
resulting natural-log error is at most $\rho\log X$ per part.  Together with the
target upper estimate, this gives
\begin{align}
 \sum_{i=1}^ks_p(a_i)-s_p(n)
 &\ge\left(\frac h2\frac{p-1}{\log p}-(k+1)\rho\right)\log X\notag\\
 &\ge\left(\frac{2h}{\log5}-(k+1)\rho\right)\log X
 >(c_1+\tfrac34\delta_5)\log X>v,             \label{eq:fixedpbudget}
\end{align}
because $(p-1)/\log p$ is increasing for $p\ge3$.

\smallskip
\noindent\emph{The medium primes $P<p\le Y$.}
Put $Y=(\log X)^B$ and $u_i=1-e_i$.  Then
\begin{equation}\label{eq:usum}
 \sum_{i=1}^ku_i=k-(\lambda_2+\lambda_3)=1+\Delta.
\end{equation}
The conditions of \cref{lem:twoblock} hold uniformly for every part:
by \eqref{eq:exponentgap}, the two scale exponents are separated from
$0$, $1$, and one another; the parameter intervals have length
$\asymp U_i$; the full-box estimate \eqref{eq:fullboxparts} gives
$M_it_i-1\asymp X$ for every $t_i\in I_i$; and $d=-1$ is allowed
explicitly.  Moreover $p\nmid M_i$ because $p>P\ge5$.
Apply that lemma with $A_0=B$, error $\eta$, and $D=B+4$.  Its
quantitative exceptional bound, summed over all $p\le Y$ and all $k$
coordinates, removes $o(V)$ tuples.  Every remaining tuple satisfies
simultaneously
\begin{equation}\label{eq:mediumindividual}
 \frac{s_p(a_i)}{p-1}
 \ge\left(\min\left\{u_i,\frac12\right\}-\eta\right)\log_pX.
\end{equation}
The target upper lemma, also applied with $A=B$ and error $\eta$,
gives
\[
 \frac{s_p(n)}{p-1}\le\left(\frac12+\eta\right)\log_pX
\]
for every $p\le Y$, outside $o(X)$ targets.  Since
$\min\{u_i,1/2\}\ge u_i/2$, \eqref{eq:usum} and
\eqref{eq:etachoice} imply
\begin{align}
 \frac{\sum_i s_p(a_i)-s_p(n)}{p-1}
 &\ge\left(\frac\Delta2-(k+1)\eta\right)\log_pX\notag\\
 &>\frac{7\Delta}{16}\log_pX
 >\frac\Delta3\log_pX.                       \label{eq:mediumbudget}
\end{align}
By \eqref{eq:Pchoice}, the last quantity is greater than
$c_1\log X/(p-1)\ge v/(p-1)$.  Hence the digit criterion holds
simultaneously for every medium prime.

\smallskip
\noindent\emph{Large primes and averaging.}
All multipliers have prime factors in $\{2,3\}$, and
\[
 U_i\gg X^{\min_j(1-e_j)}.
\]
The hypothesis $a_i\asymp X$ in \cref{lem:largeprime} holds throughout
the full Cartesian box by \eqref{eq:fullboxparts}, rather than only on
represented tuples.  Thus that lemma, with the already fixed $B\ge4$,
removes only $o(V)$ tuples and proves the required rising-product
valuation inequality for every $p>Y$.

Intersect the normal-target sets used above.  Their complement has
size $o(X)$.  By the uniform upper bound in
\cref{lem:representations}, all representations of abnormal targets
account for at most
\[
 o(X)\cdot O(V/X)=o(V)
\]
tuples.  The fixed-prime coordinate exceptions, the simultaneous
two-block exceptions, and the large-prime sieve exceptions together
also total $o(V)$.  Hence only $o(V)$ representations have been
removed.

By \cref{cor:representationaveraging}, all but $o(X)$ targets in each
fixed target subinterval retain a representation satisfying the digit
criterion for every $p\le Y$ and the equivalent Kummer valuation
inequality for every $p>Y$.  Lemma~\ref{lem:digitcriterion} therefore
gives
\[
 a_1!\cdots a_k!\mid n!,
 \qquad
 a_1+\cdots+a_k-n=v.
\]
Since $n<2X$ and $c_1>c$,
\[
 v-c\log n\ge(c_1-c)\log X-c\log2-1>0
\]
for large $X$.  Thus $g_k(n)\ge c\log n$ outside $o(X)$ integers in
$[X,2X)$.

It remains only to justify the passage from dyadic intervals to
natural density.  Given $\eps>0$, choose a dyadic index $m_0$ so large
that the exceptional count in every block
$[2^m,2^{m+1})$, $m\ge m_0$, is at most $\eps2^m$.  Summing the
geometric series shows that the exceptions up to any $x$ contribute
at most $2\eps x+O(2^{m_0})$.  Divide by $x$ and then let
$\eps\downarrow0$.  This proves the global density-one assertion.
\end{proof}

\section{Consequences, comparisons, and further directions}

\begin{proof}[Proof of \cref{cor:summatory}]
Fix $c<3(k-1)/\log12$ and define
\[
 \mathcal G_c=\{n\in\N:g_k(n)\ge c\log n\}.
\]
By \cref{thm:main}, $|[1,x]\setminus\mathcal G_c|=o(x)$.  Also
$g_k(n)\ge0$ for every $n$: if $n\ge k$, take
$(n-k+1,1,\ldots,1)$; if $n<k$, take $(1,\ldots,1)$, whose factorial
product is $1$ and whose excess is $k-n>0$.  Therefore
\[
 \sum_{n\le x}g_k(n)
 \ge c\sum_{\substack{n\le x\\n\in\mathcal G_c}}\log n
 =c x\log x+o(x\log x).
\]
Let $c\uparrow3(k-1)/\log12$.  The upper summatory bound follows by
summing \cref{thm:upper}; the $\log\log n$ term and bounded error are
$o(x\log x)$.
\end{proof}

The four prime ranges and their logically separate inputs are:
\begin{center}
\small
\begin{tabular}{@{}l>{\raggedright\arraybackslash}p{0.34\textwidth}>{\raggedright\arraybackslash}p{0.31\textwidth}@{}}
\toprule
Prime range & Input & Purpose \\
\midrule
$p=2,3$ & forced suffixes and \cref{thm:sunitnormal} & endpoint optimization \\
fixed $5\le p\le P$ & \cref{thm:sunitnormal} & full normal-order digit mass \\
$P<p\le(\log X)^B$ & quantitative two-block estimate & simultaneous growing-base control \\
$p>(\log X)^B$ & explicit Kummer carry sieve & isolated large prime-power demands \\
\bottomrule
\end{tabular}
\end{center}

At formal equality, the binary and ternary constraints give
\[
 \lambda_2=2c\log2-h,
 \qquad \lambda_3=c\log3-h.
\]
Exhausting the representation resource means $\lambda_2+\lambda_3=h$, hence
$c\log12=3h$.  The proof uses only strict inequalities and takes the
endpoint only after the density-one statement has been established
for every smaller $c$.

\begin{remark}[Relation to prior work]\label{rem:comparison}
Pomerance's Theorem~1 \cite{Pomerance2026} and Sothanaphan's Appendix
Corollary~2 \cite{Sothanaphan2026} obtain a centered rising-product
divisibility range with coefficient $1/\log4$.  Their carry-rich,
spike-controlled mechanism works inside one middle binomial
coefficient.  Earlier central-binomial work of
Erd\H{o}s--Graham--Ruzsa--Straus, Pomerance, Sanna, and
Ford--Konyagin concerns prime factors or prescribed divisors of
$\binom{2m}{m}$ and supplies important background
\cite{ErdosGrahamRuzsaStraus1975,Pomerance2015,Sanna2018,FordKonyagin2021}.
The mixed-resource representation developed here is a distinct
mechanism.

The digital input is likewise distinct from existing simultaneous-base
results.  Spiegelhofer and Spiegelhofer--Stoll study binary--ternary
collisions and digit sums on arithmetic progressions
\cite{Spiegelhofer2023,SpiegelhoferStoll2020}.  Croot--Mousavi--Schmidt
and Bloom--Croot construct integers satisfying simultaneous digit or
carry restrictions in several bases
\cite{CrootMousaviSchmidt2024,BloomCroot2025}.  The present argument
instead requires normal-order digit mass along affine progressions
whose multiplier is an $S$-unit that grows as a fixed power of the
parameter scale.

Drmota--Spiegelhofer's Lemma~3.4 fixes the coefficients before the
threshold is introduced.  Our Fourier argument requires one threshold
for polylogarithmically varying coefficients, so \cref{lem:phase}
derives the required coefficient-uniform statement for $d\le2$
directly from their Lemma~3.3 by resolving its finite exceptional
family.
\end{remark}

\begin{remark}[Quantitative status]\label{rem:effectivity}
The elementary concentration, two-block, representation, and
large-prime estimates are quantitative.  The density threshold inherits
its qualitative character from the finite exceptional-subspace
conclusion used in \cref{thm:sunitnormal}.  Effective sparse-form
estimates would yield an explicit threshold in \cref{thm:main}.
\end{remark}

\begin{question}\label{q:constant}
Is $(k-1)/\log2$ the true density-one leading constant?  Reaching it
would require nearly maximal binary carry mass while simultaneously
meeting ternary and higher-prime valuation constraints.
\end{question}

\begin{question}\label{q:clt}
Can \cref{thm:sunitnormal} be upgraded, for arbitrary fixed disjoint
prime sets, to a uniform central or local limit theorem?  Drmota and
Spiegelhofer prove much stronger binary--ternary results, while the
present proof is built from one- and two-digit correlations.
\end{question}

\begin{question}\label{q:exceptional}
Can one obtain an effective power-saving estimate for the exceptional
set in \cref{thm:main} by replacing the qualitative
Subspace-Theorem step with an effective sparse-form estimate?
\end{question}

\appendix
\section{Parameter ledger}\label{app:ledger}

For reference, the choices in the proof of \cref{thm:main} occur in
the following order.
\begin{enumerate}
\item Fix $k\ge2$ and $c<3(k-1)/\log12$.
\item Choose $c<c_1<c_0<3(k-1)/\log12$ and resources $\lambda_2,\lambda_3$ with
strict binary, ternary, and representation margins.
\item Split the resources into finitely many exponents
$e_i\in(0,1)\setminus\{1/2\}$ and define the fixed separation margin
$\delta_{\rm exp}$.
\item Choose the fixed-prime digit error $\rho$ below one quarter of
all binary, ternary, and quinary margins.
\item Choose the medium-prime error
$\eta<\Delta/[16(k+1)]$, where
$\Delta=k-1-\lambda_2-\lambda_3$.
\item Choose the fixed cutoff $P$ so that
$c_1\log p/(p-1)<\Delta/3$ for $p>P$.
\item Choose $B\ge4$ and use the two-block estimate with logarithmic
error exponent $D=B+4$.
\item Finally take $X$ sufficiently large for all uniform estimates,
all fixed target subintervals, and all rounding errors.
\end{enumerate}
No limit in $P$, $B$, or an error parameter is interchanged with the
limit $X\to\infty$.

\section{Application-by-application hypothesis audit}
\label{app:hypothesisaudit}

For transparency, we list the parameters used at every nontrivial
lemma invocation in the proof of \cref{thm:main}.  This appendix is
part of the proof bookkeeping; no computation is invoked.

\begin{enumerate}
\item \emph{Representation lemma.}
The representation lemma is applied with its coordinate count equal to $k$.  The target interval is
$[X/2,3X]$.  Each multiplier satisfies
$M_i=X^{e_i+O(1/\log X)}$ with fixed $e_i\in(0,1)$, and
$U_i=X/M_i$.  Here $r$ is the number of binary multipliers, and the distinguished pair is
$(M_1,M_{r+1})=(2^{J_1},3^{K_1})$; it is coprime and its product is
$o(X)$ because $x_1+y_1<1$.  Thus all hypotheses of
\cref{lem:representations} hold.  Equation
\eqref{eq:representationbox} is used on the full parameter box;
\eqref{eq:representationgeometry} is used only after imposing the
representation equation.

\item \emph{Fixed-prime $S$-unit normal order.}
For a coordinate with multiplier $M_i$, take $A=M_i$, $b=-1$,
$U=U_i$, and $I=I_i$.  The finite set $S$ is $\{2\}$, $\{3\}$, or
$\{2,3\}$ according to the multiplier family, and the target-prime
set is fixed before $X\to\infty$.  Equations
\eqref{eq:Jbound}--\eqref{eq:Kbound} give one constant $C_*$ with
$A\le U_i^{C_*}$.  The representation construction places $I_i$ in a
fixed proportional interval at scale $U_i$, while
\eqref{eq:fullboxparts} gives $M_it-1>0$ for every $t\in I_i$.
Hence every hypothesis of \cref{thm:sunitnormal} holds uniformly.

\item \emph{Forced suffix.}
For a same-base multiplier, take $e=e_i$ and $p^J=M_i$.  By
\eqref{eq:exponentgap}, $1-e_i\ge4\delta_{\rm exp}$; hence the growth
hypothesis in \cref{lem:forcedsuffix} holds with
$\delta=2\delta_{\rm exp}$.  Moreover $p^JU_i=X$ exactly and $I_i$ is
a proportional interval of length $\asymp U_i$.

\item \emph{Two-block affine estimate.}
For coordinate $i$, take $a=e_i$, $u=1-e_i$, $M=M_i$, $U=U_i$,
$I=I_i$, and $d=-1$.  Equation \eqref{eq:exponentgap} supplies a
fixed positive lower bound for $a,u,$ and $|a-u|$.  The multiplier and
parameter scales have only bounded, hence admissible, rounding
factors.  The entire-box comparison
\eqref{eq:fullboxparts} gives $M_it-1\asymp X$ for every parameter in
$I_i$.  For medium primes, $p>P\ge5$ and therefore $p\nmid M_i$.
Thus \cref{lem:twoblock} applies simultaneously on the full box, not
merely to represented tuples.

\item \emph{Target digit upper bounds.}
Every target lies in $[X,2X)$, and the prime range is at most
$(\log X)^B$.  Thus \cref{lem:targetupper} is applied with fixed
proportional constants and $A=B$.  Its exceptional target set is
$o(X)$ independently of the representation parameters.

\item \emph{Large-prime sieve.}
All $M_i$ have prime factors in the fixed set $\{2,3\}$.  From the
fixed exponents, $U_i\gg X^{\delta_L}$ for
$\delta_L=\min_i(1-e_i)/2>0$.  Equation
\eqref{eq:fullboxparts} gives $a_i=M_it_i-1\asymp X$ throughout the
full box, and $v=O(\log X)$.  Hence \cref{lem:largeprime} applies with
the already fixed $B\ge4$.  On represented tuples its internally
defined integer $n=\sum_i a_i-v$ is the prescribed factorial target.

\item \emph{Averaging over representations.}
Let $V=\prod_i|I_i|$.  Each target has at least $c_RV/X$
representations and at most $C_RV/X$.  Coordinate exceptions,
two-block exceptions, and large-prime exceptions together have size
$o(V)$.  The $o(X)$ abnormal targets contribute at most
$o(X)\,O(V/X)=o(V)$ represented tuples.  Therefore
\cref{cor:representationaveraging} leaves a good representation for
all but $o(X)$ targets in each fixed subinterval.
\end{enumerate}

\section*{Acknowledgements}
The author used OpenAI's ChatGPT during the preparation of this manuscript for ideation, formulation, proof exploration and refinement, narrowing the search space, programming, LaTeX formatting, and related orchestration. The author is responsible for the final contents of the paper.

I thank SamKorsky for the independent discovery and public announcement (20 June 2026) of the matching density-one lower bound with the same leading coefficient \(\frac{3(k-1)}{\log 12}\). This confirms that the value is the natural limit of the mixed binary--ternary resource method under the representation constraint.

\end{document}